\newcommand{\supp}{\operatorname{supp}}
\renewcommand{\>}{\rangle}
\newcommand{\<}{\langle}
\newcommand{\IM}{\operatorname{\mathcal{M}}}
\newcommand{\EE}{\operatorname{\mathcal{E}}}
\newcommand{\LL}{\operatorname{\mathcal{L}}}
\renewcommand{\AA}{\operatorname{\mathfrak{A}}}
\newcommand{\PP}{\operatorname{\mathfrak{P}}}
\newcommand{\IS}{\operatorname{\mathcal{S}}}
\newcommand{\CP}{\mathbb{C}\mathbb{P}}
\newcommand{\eps}{\epsilon}
\newcommand{\CR}{\bar{\partial}}
\newcommand{\del}{\partial}
\newcommand{\IC}{\operatorname{\mathbb{C}}}
\newcommand{\IZ}{\operatorname{\mathbb{Z}}}
\newcommand{\IR}{\operatorname{\mathbb{R}}}
\newcommand{\IN}{\operatorname{\mathbb{N}}}
\newcommand{\IH}{\operatorname{\mathbb{H}}}
\newcommand{\IF}{\operatorname{\mathbb{F}}}
\newcommand{\IP}{\operatorname{\mathbb{P}}}
\renewcommand{\IS}{\operatorname{\mathbb{S}}}
\renewcommand{\LL}{\operatorname{\mathcal{L}}}
\renewcommand{\AA}{\operatorname{\mathcal{A}}}
\newtheorem{theorem}{Theorem}[section]
\newtheorem{proposition}[theorem]{Proposition}
\newtheorem{definition}[theorem]{Definition}
\newtheorem{lemma}[theorem]{Lemma}
\newtheorem{corollary}[theorem]{Corollary}
\newtheorem{remark}[theorem]{Remark}
\title{Floer theory for Hamiltonian PDE\\ using model theory}
\author{Oliver Fabert}
\thanks{O. Fabert, VU Amsterdam, The Netherlands. Email: o.fabert@vu.nl}
\begin{document}
\maketitle

\begin{abstract}
Under natural restrictions it is known that a nonlinear Schr\"odinger equation is a Hamiltonian PDE which defines a symplectic flow on a symplectic Hilbert space preserving the Hilbert norm. When the potential is one-periodic in time and after passing to the projectivization, it makes sense to ask whether the natural analogue of the Arnold conjecture holds. By employing methods from non-standard model theory we show how Hamiltonian Floer theory can be generalized from finite to infinite dimensions. While our proof entirely builds on finite-dimensional results, we do not ask for any prior knowledge of non-standard model theory.   
\end{abstract}

\tableofcontents
\markboth{O. Fabert}{Floer theory for Hamiltonian PDE} 

\section{Hamiltonian partial differential equations}

Nonlinear Schr\"odinger equations play a very important role in mathematical physics and have applications in, e.g., solid
state physics, condensed matter physics, quantum chemistry, nonlinear optics, wave propagation, protein folding and the semiconductor industry. In contrast to the well-known linear Schr\"odinger equation describing the time evolution of the quantum wave function of a single particle, nonlinear Schr\"odinger equations are classical field equations describing multi-particle systems, where the nonlinearity models the interaction between different particles. An example of a nonlinear Schr\"odinger equation is the so-called Gross-Pitaevskii equation $$i\del_t u \,=\, - \Delta u \;+\; c|u|^2 u\;+\; V(t,x)u,$$ which plays an important role in the theory of Bose-Einstein condensates. Here $u=u(t,x)\in\IC$ is a complex-valued function depending on time and space, $\del_t$ is the derivative with respect to the time $t\in\IR$, $\Delta$ denotes the Laplace operator with respect to the space coordinate $x$,  $V(t,x)$ is a time-dependent exterior potential and $c\in\IR$ is a scalar whose sign depends on whether the particles are attracting or repelling each other. Here and in what follows we restrict ourself to the case of one spatial dimension for notational simplicity; we claim that everything, including our main theorem, can be generalized to the higher-dimensional case. \\

Nonlinear Schr\"odinger equations are important examples of \emph{Hamiltonian partial differential equations}, where we refer to \cite{K} for definitions, statements and further references. This means that they can be written in the form $\del_t u=X^H_t(u)$, where the Hamiltonian vector field $X^H_t$ is determined by the choice of a (time-dependent) Hamiltonian function $H=H_t$ and a linear symplectic form $\omega$. Here a bilinear form $\omega:\IH\times\IH\to\IR$ on a real Hilbert space $\IH$ is called symplectic if it is anti-symmetric and nondegenerate in the sense that the induced linear mapping $i_{\omega}:\IH\to\IH^*$ is an isomorphism. As in the finite-dimensional case it can be shown that for any symplectic form $\omega$ there exists a complex structure $J_0$ on $\IH$ such that $\omega$, $J_0$ and the real inner product $\langle\cdot,\cdot\rangle_{\IR}$ on $\IH$ are related via $\langle\cdot,\cdot\rangle_{\IR}=\omega(\cdot,J_0\cdot)$.  \\

In the case of nonlinear Schr\"odinger equations on the circle $S^1=\IR/2\pi\IZ$ one chooses the complex Hilbert space $\IH=L^2(S^1,\IC)$ of square-integrable complex-valued functions on the circle which naturally can be viewed as a real Hilbert space by identifying $\IC$ with $\IR^2$. The standard complex inner product $\<\cdot,\cdot\>_{\IC}$ is related to the standard real inner product $\<\cdot,\cdot\>_{\IR}$ and the standard symplectic form $\omega$ by $\<\cdot,\cdot\>_{\IC}=\<\cdot,\cdot\>_{\IR}+i\omega$ and the symplectic form is related to the real inner product via $\omega=\<J_0\cdot,\cdot\>_{\IR}$ with $J_0=i$ denoting the standard complex structure on $\IH$. In order to stress the relation with the finite-dimensional case of $\IR^{2n}=\IC^n$, note that, using the Fourier series expansion $u(x)=(2\pi)^{-1/2}\sum_{n=-\infty}^{\infty} \hat{u}(n)\cdot \exp(inx)$ and writing $\hat{u}(n)=q_n+ip_n$ for all $n\in\IZ$, it follows that the symplectic Hilbert space $L^2(S^1,\IC)$ can be identified with the space $\ell^2(\IC)$ of square-summable complex-valued series $\hat{u}:\IZ\to\IC$ equipped with the symplectic form $\omega=\sum_{n=-\infty}^{+\infty} dp_n\wedge dq_n$. The corresponding Hamiltonian function is of the form $$H_t(u)\,=\, \int_0^{2\pi} \frac{|u_x(x)|^2}{2}\;dx \;+\; F_t(u)$$ with $$F_t(u)\,=\, \int_0^{2\pi} \frac{1}{2} f(|u(x)|^2,x,t)\,dx,$$ where $f$ is a smooth, real-valued function on $\IR^+\times S^1\times\IR$. Note that the Gross-Pitaveskii equation is recovered by setting $f(|u(x)|^2,x,t):= c/2\cdot |u(x)|^4 + V(t,x) \cdot |u(x)|^2$. \\

\section{Nonlinear Schr\"odinger equations of convolution type}

While the symplectic form $\omega$ is nondegenerate on $L^2(S^1,\IC)$, the Hamiltonian $H_t$ is only well-defined and smooth on its dense subspace $H^{1,2}(S^1,\IC)$. While this is apparent for the first summand as it involves the first derivative, observe that even the Hamiltonian $F_t$ modelling the nonlinearity is not defined on all of $\IH$ when the resulting Schr\"odinger equation is truely nonlinear. This in turn leads to true problems with the existence of the corresponding Hamiltonian flow $\phi_t=\phi^H_t$, describing the time-evolution of solutions of the nonlinear Schr\"odinger equation. \\

We start with the case of the \emph{free} nonlinear Schr\"odinger equation, that is, when the nonlinearity $f$ is equal to zero. Note that in this case the Hamiltonian $H_t$ simplifies to $$H^0(u)\,=\,\int_0^{2\pi} \frac{|u_x(x)|^2}{2}\;dx\,=\,\sum_{n=-\infty}^{+\infty} \frac{n^2}{2}|\hat{u}(n)|^2.$$ While the resulting Hamiltonian vector field $X^0(u)=i\Delta u$ is only defined on $H^{2,2}(S^1,\IC)$, we can prove the following result about the corresponding flow $\phi^0_t$.

\begin{proposition}\label{free-NLS} The flow of the free Schr\"odinger equation is given by $$\phi^0_t(u)\,=\,\exp(it\Delta)(u)\,=\,\sum_{k=0}^{\infty} \frac{(it)^k}{k!}\cdot \Delta^k(u).$$ For fixed time $t$ it preserves the $L^2$-norm and hence defines a linear symplectomorphism on the full symplectic Hilbert space $\IH=L^2(S^1,\IC)$, which restricts to a finite-dimensional linear symplectomorphism on every $\IC^{2k+1}:=\{u\in\IH:\,\hat{u}(n)=0\,\,\textrm{for all}\,\, |n|>k\}$. \end{proposition}

\begin{proof} In order to see this, observe that, after applying the Fourier transform, the symplectic vector field $X^0(\hat{u})(n)=in^2\cdot\hat{u}(n)$ has a linear flow given by $$\phi^0_t(\hat{u})(n)=\exp(itn^2)\cdot\hat{u}(n),\,\,n\in\IN.$$ Since in every frequency it multiplies the Fourier coefficient by a complex number of norm one, the claims follow. \end{proof}

On the other hand, if the Hamiltonian $F_t$ describing the nonlinearity is only densly defined, it is typically a very hard problem to establish the existence of a corresponding Hamiltonian flow $\phi^F_t$ on the full phase space $\IH$. The problem is that the flow on $\IH$ is no longer the unique solution of an ordinary differential equation given by the Hamiltonian vector field $X^F_t$. In order to circumvent problems arising from missing regularity in the nonlinear term, in this paper we hence work with a modification of the classical nonlinear Schr\"odinger equation, see \cite{K}.

\begin{definition} A  \emph{nonlinear Schr\"odinger equation of convolution type} is a Hamiltonian PDE with Hamiltonian $H_t=H^0+F_t$ on the symplectic Hilbert space $\IH=L^2(S^1,\IC)$, where the Hamiltonian defining the nonlinearity is now defined as $$F_t(u)\,:=\, \int_0^{2\pi} \frac{1}{2} f(|(u*\psi)(x)|^2,x,t)\,dx$$ with $(u*\psi)(x)=\<u,\psi(x-\cdot)\>_{\IC}$, where $\psi\in C^{\infty}(S^1,\IR)$ is some fixed smoothing kernel, and $f$ still denotes a smooth, real-valued function on $\IR^+\times S^1\times\IR$. \end{definition}

Instead of considering density functions of the form $f(|u(x)|^2,x,t)$, from now on we hence consider density functions of the form $f(|(u*\psi)(x)|^2,x,t)$. Nonlinear Schr\"odinger equations of convolution type describe multi-particle systems with nonlocal interaction. The comparison with the above example and a short computation show that the resulting nonlinear Schr\"odinger equations are given by $$i\del_t u \,=\, - \Delta u \;+\; \del_1  f(|(u*\psi)(x)|^2,x,t)(u*\psi)*\psi,$$ where $\del_1 f$ means derivative with respect to the first coordinate. \\

We collect important observations about these class of equations in the following  

\begin{proposition} For every nonlinear Schr\"odinger equation of convolution type the resulting flow is given by the composition $\phi_t=\phi^G_t\circ\phi^0_t$ of the flow $\phi^0_t$ of the free Schr\"odinger equation with the Hamiltonian flow of $G_t=F_t\circ\phi^0_{-t}$. The time-dependent Hamiltonian functions $F:\IR\times\IH\to\IR$ as well as $G:\IR\times\IH\to\IR$ are smooth; in particular, for fixed time $t$ the map $\phi_t:\IH\to\IH$ is a smooth symplectomorphism defined on the full Hilbert space $\IH=L^2(S^1,\IC)$. Furthermore, everything descends to the projective Hilbert space $\IP(\IH)$ equipped with the Fubini-Study form. \end{proposition}

\begin{proof} Since the convolution of a function $u\in L^2(S^1,\IC)$ with the smooth function $\psi$ ensures that the resulting function is smooth, i.e., $u*\psi\in C^{\infty}(S^1,\IC)$, it immediately follows that the Hamiltonian $F$ is well-defined and smooth on all of $\IR\times\IH$. Furthermore, since $\phi^0_{-t}(u)*\psi=\phi^0_{-t}(u*\psi)=u*\phi^0_{-t}(\psi)$ and $\phi^0_t$ is infinitely often differentiable with respect to $t$ on $C^{\infty}(S^1,\IC)\subset\IH$, it follows that the same continues to hold for $G$. \\
 
On the other hand, while it immediately follows from the fact that $\phi^0_t$ is a unitary linear map that $\phi^0_t$ descends to the projectivization $\IP(\IH)$ of $\IH$, in order to prove the same for the Hamiltonian flows of $F$ (and hence of $G$), we first introduce the new Hamiltonian function $K:\IH\to\IR$ given by (half the square of) the Hilbert space norm, $K(u)=|u|^2/2$. With this it remains to prove that the Hamiltonian flow of $F$ preserves the Hamiltonian $K$ and vice versa. Since $$X^K(F)=-X^F(K)=\omega(X^F,X^K)=\<X^F,\nabla K\>_{\IR}$$ wiht $\nabla K(u)=u$, it suffices to show that, at every point $u\in\IH$, the symplectic gradient $X^F(u)=X^F_t(u)=i\del_1 f(|(u*\psi)(x)|^2,x,t)(u*\psi)*\psi\in\IH$ is perpendicular to $u$ with respect to the real inner product on $\IH=L^2(S^1,\IR^2)$. First observe that the statement is immediately clear if $u$ is truely real (that is, $u(x)\in\IR\subset\IC$ for all $x\in S^1$) or truely imaginary, as in this case $X^F(u)$ is truely imaginary, or truely real, respectively. For the general case write $u=u_R+i u_I$ and $X^F(u)=X^F_R(u)+i X^F_I(u)$ with real-valued functions $u_R,u_I,X^F_R,X^F_I$. In this case we can use the compatibility of the real $L^2$-inner product with the product and convolution of functions to show that $\<u_R,\del_1 f(|(u*\psi)(x)|^2,x,t)(u_I*\psi)*\psi\>$ is equal to $\<u_I,\del_1 f(|(u*\psi)(x)|^2,x,t)(u_R*\psi)*\psi\>$, which in turn proves that $$\<u,X^F(u)\>\,=\,\<u_I,X^F_R(u)\>\;-\;\<u_R,X^F_I(u)\>\,=\,0,$$ finishing the proof of the proposition. \end{proof}

In what follows we view the projective Hilbert space as quotient of the unit sphere $\IS(\IH)$ in $\IH$ by the action of $U(1)=S^1$, $\IP(\IH)=\IS(\IH)/S^1$. Note that studying the Schr\"odinger equation on $\IP(\IH)$ in place of $\IH$ is also natural from the view point of quantum physics. \\

Before we can state the main theorem, we however first need to introduce a small technical assumption, which will play the role of a nondegeneracy condition for the Hamiltonian in infinite dimensions. \\

Following the proof of proposition \ref{free-NLS}, the complex eigenvalues of the (linear) time-one flow map $\phi^0_1=\exp(i\Delta)$ are given by the sequence $\exp(in^2)\in\IC$, $n\in\IN$. After restricting to a finite-dimensional subspace $\IC^{2k+1}\subset\IH$ and passing to the projectivization, it follows that all fixed points of $\phi^0_1$ are nondegenerate in the sense that one is not an eigenvalue of the linearized return map. On the other hand, after passing to infinite-dimensional case, the latter is no longer true, as a subsequence of eigenvalues may converge to $1$, a consequence of the fact that the ratio of the spatial and of the time period is irrational. In order to avoid resulting problems with the lack of nondegeneracy of the time-one flow map of the free Schr\"odinger equation, we restrict ourselves to smoothing kernels $\psi$ which are special in the sense of the following 

\begin{definition}\label{admissible} A smoothing kernel $\psi\in C^{\infty}(S^1,\IR)$ is said to be \emph{admissible} if the following holds: Denoting by $\hat{\psi}:\IZ\to\IC$ the Fourier transform of $\psi:S^1\to\IR$, we require that there exists some $\delta>0$ such that $\hat{\psi}(n)=0$ whenever $|\exp(in^2)-1|<\delta$ for all frequencies $n\in\IZ$. \end{definition}

We emphasize that the threshold $\delta>0$ can be chosen arbitrarily small and that every finite-dimensional nonlinearity, that is, where $\textrm{supp}(\hat{\psi})\subset\{-\ell,\ldots,+\ell\}$ for some $\ell\in\IN$, is clearly admissible. On the other hand, for every $\delta<2$ it follows by the same arguments that the spectrum of allowed frequencies $M_{\delta}:=\{n\in\IZ:\;|\exp(in^2)-1|\geq\delta\}$ is unbounded in both directions. Everything relies on the fact that the quotient of the spatial period ($=2\pi$) and the time period ($=1$) is irrational; in particular, we claim that everything can be generalized to other spatial and time periodicities, as long as the latter irrationality is preserved.    

\section{Statement of the main theorem}

From now on let us assume that the nonlinear term in the Schr\"odinger equation is one-periodic in time, that is, $f(|(u*\psi)(x)|^2,x,t+1)=f(|(u*\psi)(x)|^2,x,t)$. Then it follows that every nonlinear Schr\"odinger equation defines a flow $\phi_t=\phi^H_t$ on the projective Hilbert space $\IP(\IH)$ where the underlying Hamiltonian is one-periodic in time, $H_{t+1}=H^0+F_{t+1}=H^0+F_t=H_t$. \\

In the case of time-one-periodic smooth Hamiltonians on finite-dimensional projective spaces $\CP^n=\IP(\IC^{n+1})$ we have the following famous   

\begin{theorem} (\cite{Fo},\cite{Sch}) The time-one map of a Hamiltonian flow on $\CP^n$ always has at least $n+1$ fixed points, that is, the degenerate version of the famous Arnold conjecture holds. \end{theorem}

Viewing the Hamiltonian flow $\phi_t$ on $\IP(\IH)$ defined by the nonlinear Schr\"odinger equation of convolution type as an infinite-dimensional generalization, it is natural to ask whether an analogue of the Arnold conjecture also holds in this infinite dimensional context, establishing the existence of infinitely many fixed points of the time-one map. \\

But first, in order to show that the generalization to infinite dimensions is not trivial and we can only expect it to hold after imposing restrictions, we first give the following counterexample.

\begin{proposition} There exists a smooth Hamiltonian function on $\IP(\IH)$ whose time-one map has no fixed points at all.  \end{proposition}

\begin{proof} The function $L$ defined by $$L:\IH\to\IR,\,\,L(u) \,:=\, \int_0^{2\pi} V(x) \frac{|u(x)|^2}{2}\;dx$$ decends to a function on the symplectic quotient $\IP(\IH)=\IS(\IH)/S^1$, since its flow map is given by $(\phi^L_t(u))(x) = \exp(it V(x))\cdot u(x)$ and hence preserves the $L^2$-norm. In the same way it can be seen that $u\in\IS(\IH)$ is a fixed point of $\phi^L_1$ on $\IP(\IH)$ if and only if there exists some $a\in\IR$ such that for all $x\in S^1$ we have $\exp(iV(x))\cdot u(x)=\exp(ia)\cdot u(x)$ and hence either $(V(x)-a)/2\pi\in\IZ$ or $u(x)=0$. For a generic choice of the function $V:S^1\to\IR$ it follows that $u(x)=0$ almost everywhere and hence $u=0\not\in\IS(\IH)$, resulting in the fact that its time-one map on $\IP(\IH)$ has no fixed points at all. \end{proof}

Like for the linear Schr\"odinger equation, in our proof the appearance of the Laplace term in the nonlinear Schr\"odinger equations turns out to be essential to find infinitely many fixed points. Before we turn to the general case, we first have a look at the free Schr\"odinger equation where $F_t=0$. The proof of the following proposition is an easy exercise.

\begin{proposition} After passing to the projectivization, the time-one flow map $\phi^0=\phi^0_1$ of the free Schr\"odinger equation on $\IP(\IH)$ has infinitely many different fixed points $u_n^0$ given by the complex oscillations, $$u^0_n: S^1\to \IC,\,u_n^0(x)=\frac{1}{\sqrt{2\pi}}\exp(inx)\,\,\textrm{for every}\,\,n\in\IN.$$ \end{proposition}

From now on let $F_t$, $G_t=F_t\circ\phi^0_{-t}$ and $\phi^0_t$, $\phi_t=\phi^G_t\circ\phi^0_t$ denote the corresponding functions and flows on the projective Hilbert space $\IP(\IH)=\IS(\IH)/S^1$. Furthermore recall that the Hofer norm of the time-periodic Hamiltonian $F_t$ is defined as $$|||F|||\,:=\,\int_0^1 (\max F_t - \min F_t)\;dt$$  By generalizing Floer theory to the case of infinite-dimensional symplectic manifolds, in this paper we prove the following infinite-dimensional version of Floer's proof of the Arnold conjecture. 

\begin{theorem} Assume that, after descending to the projectivization, the Hofer norm $|||F|||$ of the Hamiltonian defining the nonlinearity is smaller than $\pi/4$ and that the underlying smoothing kernel is admissible. Then for every fixed point $u^0_n$ of the time-one map $\phi^0_1$ of the free Schr\"odinger equation there exists a fixed point $u^1_n$ of the time-one map $\phi_1$ of the given nonlinear Schr\"odinger equation of convolution type, and a Floer strip $\tilde{u}_n:\IR\times [0,1]\to\IP(L^2(S^1,\IC))$ which connects $u^0_n$ and $u^1_n$. Furthermore one obtains infinitely many different fixed points of $\phi_1:\IP(L^2(S^1,\IC))\to\IP(L^2(S^1,\IC))$ this way as $u^1_m\neq u^1_n$ for all $m>n\geq 4$. \end{theorem}

We first explain the statement about the existence of a Floer strip connecting the fixed point $u^0_n$ of the free Schr\"odinger equation with a fixed point $u^1_n$ of the given Schr\"odinger equation of convolution type, which we view as a path $u^1_n:[0,1]\to\IP(\IH)$ with $u^1_n(1)=\phi^0(u^1_n(0))$ and $\del_t u^1_n=X^G_t(u^1_n)$. With this we mean a smooth map $\tilde{u}=\tilde{u}_n:\IR\times [0,1]\to\IP(\IH)$ with $\tilde{u}(\cdot,1)=\phi^0(\tilde{u}(\cdot,0))$ satisfying the Floer equation $$0\,=\,\CR\tilde{u}\;-\;\varphi(s)\cdot\nabla G_t(\tilde{u}),$$ where $\CR=\del_s+i\del_t$ denotes the standard Cauchy-Riemann operator and $\varphi$ is a smooth cut-off function with $\varphi(s)=0$ for $s\leq -1$ and $\varphi(s)=1$ for $s\geq 0$. It connects $u^0_n$ and $u^1_n$ in the sense that there exist two sequences $(s_{\alpha}^{\pm})$ of real numbers, $s_{\alpha}^{\pm}\to\pm\infty$ with $\tilde{u}_n(s_{\alpha}^-,\cdot)\to u^0_n$,  $\tilde{u}_n(s_{\alpha}^+,\cdot)\to u^1_n$ as $\alpha\to\infty$; the latter weaker asymptotic condition is a consequence of the fact that we do \emph{not} want to assume that the nonlinearity is generic in the sense that all orbits are isolated. \\   

We will show below that the Hofer norm is indeed always finite for nonlinearities of convolution type, so that the condition can always be fulfilled after rescaling the function $f$ or, equivalently, rescaling the time or space variables. Furthermore, we do not claim that the bound on the Hofer norm is sharp in any sense; indeed we only want to stress the fact that this is  \emph{not} a perturbative result in the spirit of KAM theory.  

\begin{remark} Before we turn to the proof of the main theorem, there are a few bibliographical remarks in order:
\begin{itemize} 
\item[i)] The requirement for the smoothing kernel to be admissible is related to the famous small divisor problem that plays an important role in KAM theory, see \cite{EK} for the case of the nonlinear Schr\"odinger equation.
\item[ii)] While KAM theory for Hamiltonian PDE's has already been extensively studied, the only global (i.e., non-perturbative) results that are known to the author are generalizations of the seminal work of P. Rabinowitz in \cite{Ra} on the existence of time-periodic solutions of the nonlinear wave equation, see \cite{B} for an overview. In order to avoid the problem with small divisors, Rabinowitz only considers the case where the time period is a natural multiple of the space period.
\item[iii)] While a general nonsqueezing theorem for symplectomorphisms in infinite dimensions is not proven, a significant amout of work has gone into proving analogues of Gromov's theorem for special Hamiltonian PDE (e.g. \cite{K} for the case of nonlinear wave equations and nonlinear Schr\"odinger equations of convolution type) and under natural assumptions (e.g. \cite{AM},\cite{Fa} and the references therein).
\end{itemize}\end{remark}

Forgetting for the moment that we are working in the setting of infinite-dimensional symplectic manifolds, following Gromov's existence proof of symplectic fixed points in \cite{Gr} the idea would be to study moduli spaces of Floer strips $(\tilde{u},T)$, where $T\in\IR^+_0$ is some non-negative real number and $\tilde{u}=\tilde{u}_{n,T}$ now denotes a smooth map $\tilde{u}:\IR\times [0,1]\to\IP(\IH)$ with $\tilde{u}(\cdot,1)=\phi^0(\tilde{u}(\cdot,0))$, which now satisfies the asymptotic condition $\tilde{u}(s,t)\to u^0_n$ as $s\to\pm\infty$ as well as the $T$-dependent perturbed Cauchy-Riemann equation $$\CR^T_G\tilde{u}\,=\,\CR\tilde{u} \;-\; \varphi_T(s)\cdot\nabla G_t(\tilde{u})\,=\, 0,$$ where $\varphi_T:\IR\to [0,1]$ now denotes a family of smooth cut-off functions with compact support with $\varphi_T(s)=1$ for $s\in [0,2T]$. Assuming that, as in the case of finite-dimensional projective spaces, one could compactify the above moduli space by just adding broken holomorphic strips corresponding to the case that $T$ converges to $+\infty$, see \cite{DS} and the references therein, one would be able to show that for every $n\in\IN$ there exists a Floer strip connecting the fixed point $u^0_n$ of the free Schr\"odinger equation with a fixed point $u^1_n$ of the given Schr\"odinger equation of convolution type. In order to see that there are indeed infinitely many different fixed points $u^1_n$, we will use bounds for their symplectic action. \\

On the other hand, it is quite apparent that the underlying theory of pseudo-holomorphic curves does not instantly carry over from finite to infinite dimensions. In particular, the non-compactness of the target manifold leads to the fact that Gromov's compactness theorem does not naturally generalize from finite-dimensional projective spaces to $\IP(\IH)$. Apart from the fact that the nonsqueezing problem in infinite dimensions has already received some attention over the recent years, the field of infinite-dimensional symplectic geometry is indeed much less explored than its finite-dimensional counterpart. In this paper we will make use of the established fact that non-standard model theory, introduced by A. Robinson in his seminal book \cite{R}, provides a very efficient way to translate results from the finite to the infinite context, see e.g. the paper \cite{Os} on infinite-dimensional Brownian motion. We hope that this paper as well as our first paper \cite{Fa} on Gromov's nonsqueezing result will serve as a starting point of a general program to generalize results from finite-dimensional symplectic geometry to infinite dimensions using this tool. While a standard approach would require to prove appropriate  infinite-dimensional generalizations of every technical result, in particular, the bubbling-off phenomenon and elliptic bootstrapping, our non-standard proofs only build on the well-established finite-dimensional results. Note that this fits nicely with the well-established general promise of non-standard methods to drastically help to find proofs. \emph{We would like to emphasize that this paper is written in such a way that it does not require any previous knowledge of non-standard model theory.}\\

Indeed it is well-known, see e.g. \cite{L1}, \cite{L2} and \cite{Ke}, that there exist so-called \emph{non-standard} models of mathematics in which there exists an extension of the notion of finiteness: There exist new so-called unlimited *-real (and *-natural) numbers which are greater than all \emph{standard} real (and natural) numbers; in an analogous way there exist infinitesimal numbers, whose moduli are smaller than any positive standard real number. These *-real numbers can be introduced, using the axiom of choice, as equivalence classes of sequences of real numbers, where the standard numbers are included as constant sequences, while sequences converging to $\pm\infty$ or $0$ are examples of unlimited and infinitesimal numbers, respectively. In this paper we will use the resulting surprising fact that there exists a *-finite-dimensional symplectic space $\IF$, i.e., a finite-dimensional symplectic space in the sense of the non-standard model, which contains the infinite-dimensional Hilbert space $\IH$ as a subspace.\footnote{To be more precise, $\IH$ will be contained in $\IF$ 'up to an infinitesimal error'} Furthermore, after passing to the projectivizations $\IP(\IH)\subset\IP(\IF)$, the smooth infinite-dimensional Hamiltonian flow $\phi_t=\phi^{\IH}_t$ can be represented by a *-finite-dimensional Hamiltonian flow $\phi^{\IF}_t$. For the latter we are going to use that nonlinear Schr\"odinger equations of convolution type can be uniformly approximated by finite-dimensional Hamiltonian flows. While the latter would also be the starting point for a standard proof, one still would need to generalize the full Gromov-Floer compactness result to the case of infinite-dimensional target manifolds. In sharp contrast to this, our non-standard proof entirely relies on well-established finite-dimensional results as we are still working in a finite-dimensional setup (in the sense of the non-standard model). \\

While the existence of ideal elements such as $\IF$ is established using the so-called \emph{saturation principle}, the so-called \emph{transfer principle} ensures that every statement that holds in finite dimensions and can be formulated in first-order logic has an analogue in the *-finite-dimensional setting. In particular, we will employ that, for every fixed point $u^0_n\in\IP(\IH)\subset\IP(\IF)$ of the free nonlinear Schr\"odinger equation, the transfer principle will provide us with the existence of a Floer strip $\tilde{u}_n$ in $\IP(\IF)$ in the non-standard sense. In order to show that we actually obtain a Floer strip in $\IP(\IH)$ in the standard sense as stated in the main theorem, we first use the non-standard analogues of bubbling-off analysis and elliptic regularity in order to prove that the non-standard derivatives of $\tilde{u}_n$ have limited norm. The latter result then does not only allow us to employ a non-standard minimal surface argument in order to prove that the image of $\tilde{u}_n$ has to lie in an infinitesimal neighborhood of $\IP(\IH)$, but also can be used to show that the resulting standard Floer strip in $\IP(\IH)$ is indeed smooth in the standard sense. Since for our proof we are never leaving the finite-dimensional setup, we emphasize that our non-standard proof only builds on the corresponding well-established results from finite dimensions, combined with the transfer principle for non-standard models. \\

Summarizing we hence use that the existence of the Floer strips is guaranteed in the abstract set $\IP(\IF)$ and we only show afterwards, employing transferred versions of further standard finite-dimensional results, that all appearing norms are indeed finite, in particular, the Floer strips indeed sit in the correct set $\IP(\IH)$. Informally speaking, viewing $\IP(\IF)$ as an enlargement of $\IP(\IH)$ obtained by considering infinite sums without requiring the Hilbert norm to be finite, the existence of the Floer strip in $\IP(\IF)$ follows word by word from the proof in finite dimensions by allowing every number to be possibly infinite or infinitesimal, while finiteness is only shown a posteriori. The main advantage of our non-standard approach over a standard approach is hence that we do not need to keep track of finiteness of norms in each technical lemma which would be needed in order to generalize it from finite-dimensional Euclidean space to an infinite-dimensional Hilbert space. \\ 

This paper is organized as follows: While in section $4$ we show how finite-dimensional symplectic Floer theory can be used to prove our main theorem in the special case of so-called finite-dimensional nonlinearities, for the case of general infinite-dimensional nonlinearities we provide a return ticket back to the finite-dimensional case using non-standard models in section $5$, where we carefully introduce all relevant concepts and refer to the appendix for further details. Finally, in section $6$ we prove our main theorem in the general case by combining the Floer theoretic results from section $4$ with the non-standard model theoretic results from section $5$. More precisely, in the proof we give for every desired non-standard result the corresponding standard result from finite-dimensional Floer theory and mention the used non-standard tool listed in the appendix. 

\section{Floer strips in complex projective spaces}

Before we turn to the general case, we first restrict ourselves to the case of finite-dimensional nonlinearities. Based on Floer's proof of the Arnold conjecture in finite dimensions we show

\begin{proposition}\label{finite-dim} Assume the nonlinearity is finite-dimensional in the sense that the support of the Fourier transform $\hat{\psi}:\IZ\to\IC$ of the smoothing kernel $\psi$ is finite, that is, $\textrm{supp}(\hat{\psi})\subset\{-k,\ldots,+k\}$ for some natural number $k$. Then the statement of the main theorem holds. \end{proposition}

Identifying the symplectic Hilbert space $\IH$ with $\ell^2(\IC)$ using the Fourier transform, it follows that $G$ just depends on its value after applying the projection $\pi_k:\IH\to\IC^{2k+1}$ onto the finite-dimensional symplectic subspace $\IC^{2k+1}=\{\hat{u}\in\IH: \hat{u}(n)=0\,\,\textrm{for all}\,\,|n|>k\}$. In other words we have $G=G^k:=G\circ\pi_k$, so that at every point  the gradients $\nabla G_t$ and $X^G_t$ are vectors in $\IC^{2k+1}\subset\IH$. Note that, together with proposition \ref{free-NLS},  this implies that the flow $\phi_t$ on $\IP(\IH)$ restricts to a symplectic flow $\phi^k_t$ on $\CP^{2k}$. \\

With this the proof essentially relies on the following existence result of Floer strips in finite-dimensional complex projective spaces. From now on let us assume that the Hofer norm $|||F|||$ of $F_t$ on $\IP(\IH)$ is strictly smaller than $\pi/4$. Furthermore, let $\varphi_T:\IR\to [0,1]$, $T\in\IR^+\cup\{0\}$ denote a smooth family of smooth cut-off functions with $\varphi_0=0$ and $\varphi_T(s)=0$ for $s\leq -1$, $s\geq 2T+1$ and $\varphi_T(s)=1$ for $0\leq s\leq 2T$  for $T\geq 1$. Furthermore the natural Riemannian norm on $\CP^{2k}$ is denoted by $|\cdot|$.

\begin{proposition}\label{strips} Let $k\in\IN$ and $T\in\IR^+\cup\{0\}$. Then for every $n\leq k$ there exists a smooth map $\tilde{u}=\tilde{u}_n=\tilde{u}^k_{n,T}:\IR\times [0,1]\to\CP^{2k}$, called \emph{Floer strip}, satisfying the periodicity condition $\tilde{u}(\cdot,1)=\phi^0_1(\tilde{u}(\cdot,0))$, the asymptotic condition $\tilde{u}_n(s,\cdot)\to u^0_n$ as $s\to\pm\infty$, and the perturbed Cauchy-Riemann equation $$0=\CR^T_G\tilde{u}=\CR\tilde{u} - \varphi_T(s)\cdot \nabla G^k_t(\tilde{u}).$$ Furthermore, for the resulting families of maps $\tilde{u}=\tilde{u}_n$ we have that the energy $E(\tilde{u}_n)$ defined by $$E(\tilde{u}_n):=\int_{-\infty}^{+\infty}\int_0^1 \frac{1}{2}\big(|\del_s\tilde{u}_n|^2 + |\del_t\tilde{u}_n - \varphi_T(s)\cdot X^{G,k}(\tilde{u}_n)|^2\big)\;dt\;ds$$ is bounded by $2 |||G^k|||<\pi/2$. \end{proposition} 

\begin{proof} For the proof one observes that, for every $k\in\IN$, $\CP^{2k}$ is a closed symplectic manifold where the energy of a holomorphic sphere is bounded from below by $\pi$. Since, for every $n\leq k$, $u^0_n\in\IP(\IH)$ given by $u^0_n(x)=\exp(inx)$ is a fixed point of $\phi^0_1$ in $\CP^{2k}$, it follows that the existence of the map $\tilde{u}=\tilde{u}^k_{n,T}$ for every $T\in\IR^+\cup\{0\}$ can be deduced using the properties of the moduli space of Floer strips $(\tilde{u},T)$ established in \cite{DS} and the references therein; an alternative proof is provided by \cite{BEHWZ} by translating everything into a statement about holomorphic curves in almost complex manifolds with cylindrical ends.\\

First we emphasize that an easy computation shows that the standard complex structure satisfies the periodicity condition $(\phi^0_1)_* i= i$ required in \cite{DS}. Assuming for the moment that transversality for the Cauchy-Riemann operator $\CR_G$ given by $\CR_G(\tilde{u},T)=\CR^T_G\tilde{u}$ holds, it follows that the moduli space of tuples $(\tilde{u},T)$ is a one-dimensional manifold. Since for $T=0$ the constant strip $\tilde{u}^k_{n,0}(s,t)=u^0_n$ staying over the fixed point $u^0_n\in \CP^{2k}$ of $\phi$ is the unique solution, the existence of a Floer strip $\tilde{u}^k_{n,T}$ for all $T\in\IR^+$ follows from the Gromov-Floer compactness result. Here we emphasize that bubbling-off of holomorphic spheres is excluded due to fact that the energy $E(\tilde{u})$ is bounded from above by twice the Hofer norm of the Hamiltonian $G^k$, and the Hofer norm of $G^k$ is smaller than 1/2 the minimal energy of a holomorphic sphere in $\CP^{2k}$; for an analogous statement see proposition 9.1.4 in \cite{MDSa}. Finally, since the Cauchy-Riemann operator $\CR_G$ cannot be expected to be transversal, one first has to approximate $i$ by a family of $t$-dependent almost complex structures $J^{\nu}_t$, $t\in [0,1]$ with $J^{\nu}_1=(\phi^0_1)_* J^{\nu}_0$ in the sense that $J^{\nu}_t\to J^0_t=i$ as $\nu\to 0$. Then the Gromov-Floer compactness result in \cite{DS} can be used again to deduce the existence of Floer strips for $\nu=0$ from the existence of Floer strips for $\nu\neq 0$ for all $T\in\IR^+$. In particular, we emphasize that one does not need more elaborate technology like Kuranishi structures or polyfolds to establish the desired properties of the moduli space. \\

Concerning the additional statement, observe that the bound on $E(\tilde{u})$ has already been used to exclude bubbling-off for compactness and it can be found in \cite{MDSa} (for the case of $\phi$ being the identity). It relies on the fact that, since $\CR^T_G\tilde{u}=0$, the energy of $u$ is given by $$E(\tilde{u})=\int_{-\infty}^{+\infty}\int_0^1 \varphi_T(s) \<\nabla G^k_t(\tilde{u}),\del_s\tilde{u}\>\;dt\;ds. $$ \end{proof}

Note that, in the case of the free Schr\"odinger equation, the fixed points $u^0_n$ of the time-one map $\phi^0$ are distinguished by their symplectic action, defined in \cite{DS}. For this choose $u^0_0=1$ as reference fixed point and choose for each $u^0_n$ a holomorphic strip $\tilde{u}_{0n}:\IR\times [0,1]\to\IP(\IH)$ with $\tilde{u}_{0n}(\cdot,1)=\phi^0(\tilde{u}_{0n}(\cdot,0))$, $\tilde{u}_{0n}(s,\cdot)\to u_n^0$ as $s\to +\infty$ and $\tilde{u}_{0n}(s,\cdot)\to u_0^0$ as $s\to -\infty$ given by a gradient flow line $u_{0n}:\IR\to\IP(\IH)$ of $H^0$ by $\tilde{u}_{0n}(s,t)=\phi^0_t(u_{0n}(s))$. Then it can be shown that the symplectic action $\AA(u^0_n)$ defined in \cite{DS} is given by the Hamiltonian $H^0$ itself, 
\begin{eqnarray*} \AA(u^0_n) &:=& \int_{-\infty}^{+\infty} \int_0^1 \omega(\del_s\tilde{u}_{0n},\del_t\tilde{u}_{0n})\;dt\;ds\\ &=& \int_{-\infty}^{+\infty} \omega(\del_s u_{0n},i\nabla H(u_{0n}))\;ds\\ &=& \int_{-\infty}^{+\infty} \del_s(H\circ u_{0n})\;ds\\ &=& H^0(u^0_n) - H^0(u^0_0)\end{eqnarray*} with $H^0(u^0_n)=n^2/2$, in particular, it grows quadratically with $n\in\IN$. Note that here we use the translation between Hamiltonian Floer theory and Floer theory for general symplectomorphisms. Furthermore we are building on the fact that $H^0$ restricts to a smooth function $$H^{0,k}(u)\,=\,\sum_{n=-k}^{+k}\frac{n^2}{2}|\hat{u}(n)|^2$$ on a finite-dimensional projective space $\CP^{2k}\subset\IP(\ell^2(\IC))$ and every gradient flow line connecting $u^0_0$ and $u^0_n$ indeed stays inside this $\CP^{2k}$ as long as $n\leq k$. \\

In order to prove that we obtain sufficiently many different fixed points of the time-one map $\phi_1$ of the given nonlinear Schr\"odinger equation of convolution type in the end, we want to make use of the fact that, just as in the case of the free Schr\"odinger equation, they also can be distinguished by their symplectic action. This is the content of the following 

\begin{proposition}\label{action} For every $s\in [0,2T]$ the symplectic action $\mathcal{A}(\tilde{u}_n(s)):=\mathcal{A}^n(\tilde{u}_n(s))$ of the path $\tilde{u}_n(s)=\tilde{u}_n(s,\cdot): [0,1]\to\CP^{2k}$, defined as $$\int \tilde{u}_{0n}^*\omega \,+\, \int \big(\tilde{u}_n|_{(-\infty,s)\times [0,1]}\big)^*\omega \,+\, \int_0^1 G^k_t(\tilde{u}(s,t))\;dt,$$  is well-defined in the sense that it is independent of $n\in\IN$. Furthermore, for $m>n\geq 4$ it holds that $|\mathcal{A}(\tilde{u}_m(s))-\mathcal{A}(\tilde{u}_n(s'))|>1$. \end{proposition}

\begin{proof} For the definition of the symplectic action following \cite{DS} we use that, after concatenation with the strip $\tilde{u}_{0n}:\IR\times [0,1]\to\CP^{2k}$ used in the definition of the symplectic action $\AA(u^0_n)$, the Floer strip $\tilde{u}=\tilde{u}_n$ can be used to connect $\tilde{u}_n(s)$ with $u^0_0$. For the well-definedness let us assume that $\tilde{u}_m(s)=\tilde{u}_n(s')$. Then one has to show that the concatenated strips $\tilde{u}_{0m}\# \tilde{u}_m|_{(-\infty,s]}$ and $\tilde{u}_{0n}\# \tilde{u}_n|_{(-\infty,s']}$ are homotopic, that is, the homotopical difference between $\tilde{u}_m|_{(-\infty,s]}$ and the concatention of a holomorphic strip $\tilde{u}_{nm}:\IR\times [0,1]\to\CP^{2k}$ connecting $u^0_n$ with $u^0_m$ given by a gradient flow line of $H^0$ with $\tilde{u}_n|_{(-\infty,s']}$ represents the zero class in $\pi_2(\CP^{2k})$. But since every gradient flowline is contractible, it suffices to observe that the energy bound in proposition \ref{strips} implies that $$-\pi\,<\,\int \big(\tilde{u}_m|_{(-\infty,s]}\big)^*\omega \,-\, \int \big(\tilde{u}_n|_{(-\infty,s']}\big)^*\omega \,<\, +\pi,$$ so that the homotopical difference still has to be represented by the zero class in $\pi_2(\CP^{2k})$. For the last statement note that the same proof as used to establish the energy bound proves the inequality $\big|\mathcal{A}^n(\tilde{u}_n(s))-n^2/2\big|\,\leq\, 2\cdot |||G^k|||\,<\,\pi/2.$ Together with the fact that $\AA(u^0_m)-\AA(u^0_n)=m^2/2 - n^2/2>\pi+1$ for all $m>n\geq 4$, we find that $|\mathcal{A}(\tilde{u}_m(s)-\mathcal{A}(\tilde{u}_n(s'))|>1$, in particular, $\tilde{u}_m(s)\neq \tilde{u}_n(s')$ for different $m>n\geq 4$.  \end{proof}

With this we can show that the main theorem holds for true in the case of finite-dimensional nonlinearities. 

\begin{proof}\emph{(of proposition \ref{finite-dim})} For this one has to use that the Gromov-Floer compactness theorem furthermore proves that, as $T$ tends to $+\infty$, the sequence $\tilde{u}_T=\tilde{u}^k_{n,T}$ of Floer strips converges to a $d$-times broken Floer strip $\tilde{u}_{\infty}=(\tilde{u}^1_{\infty},\ldots,\tilde{u}^d_{\infty})$ for some $d\in\IN$, where $\tilde{u}:=\tilde{u}^1_{\infty}:\IR\times [0,1]\to\CP^{2k}$ is a smooth map with $\tilde{u}(\cdot,1)=\phi^0_1(\tilde{u}(\cdot,0))$ satisfying the Floer equation $0=\CR\tilde{u}-\varphi(s)\cdot\nabla G_t(\tilde{u}),$ where $\varphi$ is now a smooth cut-off function with $\varphi(s)=0$ for $s\leq -1$ and $\varphi(s)=1$ for $s\geq 0$. It connects the fixed point $u^0_n$ of the free Schr\"odinger equation with a fixed point $u^1_n$ of $\phi_1$ in the sense that there exist sequences $(s_{\alpha}^{\pm})$ of real numbers, $s_{\alpha}^{\pm}\to\pm\infty$ with $\tilde{u}(s_{\alpha}^-,\cdot)\to u^0_n$ and $\tilde{u}(s_{\alpha}^+,\cdot)\to u^1_n$ as $\alpha\to\infty$. In order to see the latter, note that by the bound for the energy $E(\tilde{u})$ from proposition \ref{strips}, it follows that for every $\alpha\in\IN$ there exists $\alpha\leq |s_{\alpha}|\leq 2\alpha$ such that $$\int_0^1 |\del_t\tilde{u}(s_{\alpha}^{\pm},t) - \varphi(s_{\alpha}^{\pm})X^{G,k}_t(\tilde{u}(s_{\alpha}^{\pm},t))|^2\;dt\,<\,\frac{\pi}{2\alpha}.$$ By compactness of $\CP^{2k}$ we know, possibly after passing to a subsequence, that the sequence $\tilde{u}(s_{\alpha}^{\pm},0)$ converges to a fixed point of $\phi^0_1$ or $\phi_1$, respectively. Recall that this weaker asymptotic condition is a consequence of the fact that we do not want to assume that the nonlinearity is generic in the sense that all orbits are isolated. In order to see that $u^1_m\neq u^1_n$ for all $m>n\geq 4$, observe that the inequality $|\mathcal{A}(\tilde{u}_{m,T}(s))-\mathcal{A}(\tilde{u}_{n,T}(s'))|>1$ for all $T>0$ and $s,s'\in[0,2T]$ implies that $|\mathcal{A}(u^1_m)-\mathcal{A}(u^1_n)|>1,$ where the action of $u^1_n$ is defined as $$\mathcal{A}(u^1_n)\,=\,\int \tilde{u}_{0n}^*\omega \,+\, \int \tilde{u}_n^*\omega \,+\, \int_0^1 G^k_t(\tilde{u}(s,t))\;dt,$$ and $\tilde{u}_n$ now denotes the Floer strip connecting $u^0_n$ and $u^1_n$; furthermore this definition of action is independent of $n\in\IN$ by the same arguments as used in the proof of proposition \ref{action}. \\

On the other hand, for $m>n$ it follows that the fixed points $u^0_n$ with $n>k$ of the free Schr\"odinger equation are also fixed points of $\phi_1$, where the corresponding Floer strip $\tilde{u}=\tilde{u}_n$ is just the constant strip $\tilde{u}_{n,0}(s,t)=u^0_n=u^1_n$, $(s,t)\in\IR\times [0,1]$. \end{proof}

In the case of finite-dimensional nonlinearities we see that the main theorem can be proven by studying Floer curves in finite-dimensional complex projective spaces. In preparation for the general statement, we first show that everything is independent of the chosen ambient finite-dimensional projective space. 

\begin{proposition}\label{Liouville} Assume that $\supp(\hat{\psi})\subset\{-\ell,\ldots,+\ell\}$. For $k\geq\ell\geq n$ and $T>0$  let $\tilde{u}=\tilde{u}^k_{n,T}:\IR\times [0,1]\to\CP^{2k}$ be a Floer strip as in proposition \ref{strips}. Then we have $\tilde{u}(s,t)\in\CP^{2\ell}\subset\CP^{2k}$ for all $(s,t)\in\IR\times [0,1]$. \end{proposition} 

\begin{proof} Let $\tilde{u}^{\ell}=\pi_{\ell}\circ \tilde{u}:\IR\times [0,1]\to\CP^{2\ell}$ denote the composition of $\tilde{u}:\IR\times [0,1]\to\CP^{2k}$ with the projection from $\CP^{2k}$ to $\CP^{2\ell}$. We claim that, in the case when $n\leq\ell$, that is, $u_n^0\in\CP^{2\ell}$, we indeed have that $\tilde{u}$ has image in $\CP^{2\ell}\subset\CP^{2k}$ and hence $\tilde{u}=\tilde{u}^{\ell}$. Since $\tilde{u}=u^0_n\in\CP^{2n}\subset\CP^{2\ell}$ for $T=0$, we may assume that the Floer strip $\tilde{u}$ sits in a tubular neighborhood of $\CP^{2\ell}$ in $\CP^{2k}$, possibly after passing to $0<T'<T$. It follows that we can write $\tilde{u}$ as a pair of maps, $$\tilde{u}=(\tilde{u}^{\ell}_{\perp},\tilde{u}^{\ell}):\IR\times [0,1]\to \IC^{2k-2\ell}\times \CP^{2\ell},$$ where $u^{\ell}_{\perp}$ remembers the normal component. Here $u^{\ell}_{\perp}$ shall be viewed as a section in the pull-back $(u^{\ell})^*N\to\IR\times [0,1]$ of the normal bundle $N\to\CP^{2\ell}$ of $\CP^{2\ell}\subset\CP^{2k}$ which is unitarily trivial even after applying the natural identifications. With the latter we mean the natural identification of the fibre over $(s,1)$ (= the fibre over $\tilde{u}^{\ell}(s,1)=\phi^0_1(\tilde{u}^{\ell}(s,0))$ of $N$) with the fibre over $(s,0)$ using $\phi^0_1$ for all $s\in\IR$ and, after compacifying, of the fibre over $(+\infty,t)$ with the fibre over $(-\infty,t)$ for all $t\in [0,1]$, using that $\tilde{u}^{\ell}(s,t)\to u^0_n$ as $s\to\pm\infty$. In order to see that the bundle still remains trivial, note that by varying the parameter $T>0$ we find a homotopy from $\tilde{u}=\tilde{u}^k_{n,T}$ to the constant strip $\tilde{u}^k_{n,0}=u^0_n$. \\

Now the important observation is that, since $G=G^{\ell}=G\circ\pi_{\ell}$, the projection of $\nabla G_t$ to $\IC^{2k-2\ell}$ vanishes. This however implies that the perpendicular component $\tilde{u}^{\ell}_{\perp}$ is truely holomorphic, that is, solves the unperturbed Cauchy-Riemann equation $\CR \tilde{u}^{\ell}_{\perp}=0$. Since $\tilde{u}^{\ell}_{\perp}(s,t)\to 0$ for $s\to\pm\infty$ as $u^0_n\in\CP^{2\ell}$, we can employ Liouville's theorem to show that we have $\tilde{u}^{\ell}_{\perp}=0$, that is, $\tilde{u}=\tilde{u}^{\ell}$. Note that, instead of referring to Liouville's theorem, the result can be viewed as a consequence of the minimal surface property of pseudo-holomorphic curves. Indeed, in the proof of proposition \ref{step1} below, we see that the result also follows from the fact the $L^2$-norm of $\del_s \tilde{u}^{\ell}_{\perp}$ is zero. \end{proof}

\begin{remark}\label{Liouville-2} The following observations are immediate:
\begin{itemize}
\item[i)] By the same arguments it follows that, even if we first allowed the Floer strip $\tilde{u}$ to live in the infinite-dimensional manifold $\IP(\IH)$, the finite-dimensionality of the nonlinearity ensures that it actually lives in the finite-dimensional submanifold $\CP^{2\ell}$. 
\item[ii)] Along the same lines using Liouville's theorem or the minimal surface property, it is immediate to see that, in the case of $n>\ell$, the Floer strip is constant and the fixed point $u^0_n\in\CP^{2n}$ of the free Schr\"odinger equation thus agrees with the corresponding fixed point $u^1_n$ of the nonlinear Schr\"odinger equation with convolution term. 
\item[iii)] More generally, the same argument shows that if $\hat{\psi}(n)=0$ for some $\ell<n\leq k$, then $\pi_n\circ u^{\ell}_{\perp}=0$, where $\pi_n:\IC^{2k-2\ell}\to\IC$ denotes the projection onto the $n$.th factor. Since $\psi$ is assumed to be admissible in the sense of definition \ref{admissible}, for $n\in M_{\delta}=\{n\in\IZ:\;|\exp(in^2)-1|\geq\delta\}$ the resulting fixed point $u^1_n$ of the Schr\"odinger equation with nonlinearity can\emph{not} agree with any of trivial fixed points $u^0_m$ for $m\in\IZ$ with $\hat{\psi}(m)=0$. In particular, when the nonlinearity is not zero, there is always a \emph{nontrivial} fixed point. 
\end{itemize}
\end{remark}

\section{From finite to infinite dimensions using non-standard models}

After discussing the case of finite-dimensional nonlinearities, we now want to turn to the case of general nonlinearities of convolution type. 
To this end, let us fix a nonlinearity of convolution type as in the main theorem, in particular, we assume that the underlying smoothing kernel is admissible and $|||F|||<\pi/4$. Below we show how to prove the main theorem in the general case by combining the existence result for Floer strips in finite dimensions from proposition \ref{strips} with methods from non-standard model theory. More precisely, we prove the existence of Floer strips in $\IP(\IH)$ by showing that there exists a *-finite-dimensional symplectic vector space $\IF$ that contains the symplectic Hilbert space $\IH=L^2(S^1,\IC)\cong\ell^2(\IC)$ and a *-finite-dimensional Hamiltonian flow that represent the infinite-dimensional Hamiltonian flows introduced earlier. \\

Although the appendix gives a basic introduction to non-standard model theory with details and references, we start with a quick summary of the main ideas that are needed to follow our arguments. Nonetheless we ask the reader to consult the appendix below for precise statements and further details and examples. \\

It is the fundamental idea of non-standard model theory to enlarge every \emph{standard} set $A$ to a \emph{non-standard} set $^*A$ which contains additional new ideal elements. This means that we have a map $*: V\to W$, where $V$ is a set of standard sets, called standard model, and $W$ is the corresponding set of non-standard sets, called the non-standard model. More precisely, the standard and the non-standard model come with a filtration, $V=(V_n)_{n\in\IN}$ and $W=(W_n)_{n\in\IN}$, and the map $*$ respects this filtration. In the appendix we show how to define $V=(V_n)_{n\in\IN}$ in such a way that it contains all sets that are needed for our proof. On the other hand, we also show how to prove the existence of the corresponding non-standard model $W=(W_n)_{n\in\IN}$ together with a transfer map $*: V\rightarrow W$ respecting the filtration. \\

All applications of non-standard model theory rely on the following two principles: \\
\begin{itemize}
\item\emph{Transfer principle:} If a theorem holds in the standard model $V$, then the same theorem holds in the non-standard model $W$, after replacing the elements from $V$ by their images in $W$ under the map $*$. Informally speaking, this means that every mathematical statement about standard sets also holds for their non-standard extensions. \\
\item\emph{Saturation principle:} If $(A_i)_{i\in I}$ is a collection of sets in $W$ (and $I$ is a set in $V$) satisfying $A_{i_1}\cap\ldots\cap A_{i_n}\neq\emptyset$ for all $i_1,\cdots,i_n\in I$, $n\in\IN$ \emph{(finite intersection property)}, then also the common intersection of all $A_i$, $i\in I$ is non-empty, $\bigcap_{i\in I} A_i\neq\emptyset$. \\
\end{itemize}

Denoting, as before, by $^*a\in W$ the image of $a\in V$ under $*$, the transfer principle immediately implies that  $*: V\to W$ is indeed an embedding, since $a\neq b$ in $V$ implies that $^*a\neq ^*b$ in $W$; in particular, \emph{we follow the convention to drop the star if no confusion is likely to arise. In particular, we identify every element in $\IH$ with its *-image in ${^*\IH}$.} Furthermore we have $^*\{a_1,\ldots,a_n\}=\{^*a_1,\ldots,^*a_n\}$ for all finite sets. On the other hand, if $A$ is a set in $V$ with infinitely many elements, then it easily follows from the saturation principle that ${^*}A$ is strictly larger than $A:=^*[A]:=\{^*a: a\in A\}$, see the appendix for a proof. In the latter case it holds that $A$ is not even a set in the new model $W$. In particular, this applies to our original Hilbert space $\IH(={^*}[\IH])$, viewed as a subset of ${^*}\IH$. More precisely, one can show that every non-standard model is necessarily not full, that is, there exist subsets of sets in $W$ which do not belong to $W$ itself. While this destroys all obvious logical paradoxa, on the positive side the transfer principle implies that every definable subset (subset of elements of a set in $W$ which fulfill a sentence in the language of $W$) still belongs to $W$, so the lack of fulness does not cause problems either. \\    

All applications of non-standard analysis rely on the fact that now there exist new ideal objects whose existence is an immediate consequence of the saturation principle: 
\begin{itemize}
\item[i)] There exist $r\in{^*}\IR\backslash\{0\}$ such that $|r|<1/n$ for every standard natural number $n\in\IN$. Any such $r\in{^*}\IR$ (including $r=0$) is called \emph{infinitesimal} and we write $r\approx 0$.
\item[ii)] There exist $r\in{^*}\IR$ such that $|r|>n$ for every standard natural number $n\in\IN$. Any such $r\in{^*}\IR$ is called \emph{unlimited}. Any $r\in{^*}\IR$ which is not unlimited is called \emph{limited}. 
\item[iii)] A number $r\in{^*}\IR$ is limited if and only if it is \emph{near-standard} in the sense that there exists a standard real number $s\in\IR$ with $r-s\approx 0$. For every near-standard $r\in{^*}\IR$ we call ${^{\circ}}r:=s\in\IR$ the \emph{standard part} of $r$. 
\item[iv)] Every $n\in{^*\IN}\backslash\IN$ is unlimited.  
\end{itemize}

The proof is an easy exercise, see the appendix. Using the existence of unlimited natural numbers we will now prove that the infinite-dimensional symplectic flow $\phi_t$ on $\IH$ can be represented by a *-finite-dimensional symplectic flow. For this we first need to define what we mean by a \emph{*-finite-dimensional subspace} of ${^*\IH}$. In what follows we continue to identify the Hilbert space $\IH=L^2(S^1,\IC)$ with the space $\ell^2(\IC)$ of square-summable complex-valued series $\hat{u}:\IZ\to\IC$. \\

Let $\EE(\IH)$ denote the set of finite-dimensional complex subspaces of $\IH$. Then every element in the *-image ${^*\EE(\IH)}$ of $\EE(\IH)$ is called a *-finite-dimensional complex subspace of ${^*\IH}$. Note that, since $\EE(\IH)$ is a set in the standard model $V$, it has a *-image in the non-standard model. Since every $F\in\EE(\IH)$ is a vector space over $\IC$ and a subset of $\IH$, every $F\in{^*\EE(\IH)}$ is a vector space over ${^*\IC}$ and a subset of  ${^*\IH}$ by transfer. Note that ${^*\IC}$ is indeed a field by transfer, where addition and multiplication extend the corresponding operations on $\IC$ in the sense of corollary \ref{extension}. The dimension defines a function $\dim:\EE(\IH)\to\IN$ in the standard model. By corollary \ref{extension} it follows that its *-image ${^*\dim}$ assigns to every *-finite-dimensional complex subspace $F$ a *-natural number $\dim(F):={^*\dim}(F)\in{^*\IN}$. For every $k\in\IN$ the set $$\IC^{2k+1}:=\{\hat{u}\in\IH: \hat{u}(n)=0\,\,\textrm{for all}\,\, |n|>k\}$$ is a finite-dimensional subspace in $\EE(\IH)$ of dimension $k$. By the transfer principle it follows that for every $k\in{^*\IN}$ the set $${^*\IC}^{2k+1}:=\{\hat{u}\in{^*\IH}: \hat{u}(n)=0\,\,\textrm{for all}\,\, |n|>k\}$$ is an element in ${^*\EE}(\IH)$, that is, a *-finite-dimensional subspace, of (non-standard) dimension $k\in{^*\IN}$; in particular, if $k\in{^*\IN}$ is unlimited, then it is not a finite-dimensional vector space over ${^*\IC}$. \\

In the same way as the Hilbert norm $|\cdot|$ defines a map from $\IC^k$ to $\IR^+\cup\{0\}$ for every $k\in\IN$, by transfer it follows that its *-image ${^*|\cdot|}$ defines a map from ${^*\IC}^k$ to ${^*\IR^+}\cup\{0\}$ for all $k\in{^*\IN}$. Furthermore, since for all $k\in\IN$ we have $|\hat{v}|^2=\sum_{n=-k}^{+k} |\hat{v}(n)|^2$ for $\hat{v}\in\IC^k$, for all $k\in{^*\IN}$ we have that $|\hat{v}|^2=\sum_{n=-k}^{+k} |\hat{v}(n)|^2:={^*\sum}_{n=-k}^{+k} |\hat{v}(n)|^2$ for $\hat{v}\in{^*\IC}^k$. In order to understand what a *-finite summation is, note that corresponding finite summation in $\IR$ is a function $\sum$ that assigns to every tuple $(r_{-k},\ldots,r_k)$ of elements in $\IR$ of cardinality $2k+1\in\IN$ the element $\sum_{n=-k}^k r_n:= r_{-k}+\ldots+r_k\in\IR$. By corollary \ref{extension} it follows that ${^*\sum}$  assigns to every tuple of elements in ${^*\IR}$ of (non-standard) cardinality $2k+1\in{^*\IN}$ an element in ${^*\IR}$, which we again write as $\sum_{n=-k}^k r_n$, and it has the same formal properties by the transfer principle.\\ 

For any $\hat{u},\hat{v}\in{^*\IH}$ we say that $\hat{u}$ and $\hat{v}$ are infinitesimally close to each other, $\hat{u}\approx\hat{v}$, if and only if $|\hat{u}-\hat{v}|\approx 0$, where $|\cdot|={^*|\cdot|}$ denotes the *-extension of the Hilbert norm on ${^*\IH}$. \\

\begin{proposition}\label{existence}
There exists a *-finite-dimensional complex subspace $\IF$ of ${^*\IH}$ which contains the infinite-dimensional space $\IH$ as a subspace up to an infinitesimal error, $$\IH\stackrel{\subset}{\approx}\IF\subset{^*\IH},$$ that is, for every point $u\in\IH$ there exists a point $v\in\IF$ such that $u\approx v$.
\end{proposition}

\begin{proof} We can choose $\IF={^*\IC}^{2K+1}$ for any unlimited *-natural number $K\in{^*\IN}\backslash\IN$. For this observe that for all $\hat{u}\in\IH$ and every $\eps>0$ there exists $k_0\in\IN$ such that $$\forall k\in\IN: k\geq k_0\Rightarrow d(\hat{u},\IC^{2k+1})=\min\{|\hat{u}-\hat{v}|: \hat{v}\in\IC^{2k+1}\}<\eps.$$ By transfer it follows that $$\forall k\in{^*\IN}: k\geq k_0\Rightarrow {^*d}(\hat{u},{^*\IC}^{2k+1})=\min\{|\hat{u}-\hat{v}|: \hat{v}\in{^*\IC}^{2k+1}\}<\eps.$$ Choosing any unlimited $K\in{^*\IN}\backslash\IN$, it follows from the fact that $K\geq k_0$ for any standard $k_0\in\IN$ that ${^*d}(\hat{u},{^*\IC}^{2K+1})<\eps$ for all standard $\eps>0$, that is, there exists $\hat{v}\in\IF$ with $\hat{u}\approx\hat{v}$. \end{proof}

In what follows we fix an unlimited *-natural number $N$ and define $\IF:={^*\IC}^{2N+1}$. Note that the *-extensions on $^*\IH$ of the symplectic form, the inner product and the complex structure on $\IH$ restrict to a symplectic form, inner product and complex structure on $\IF$ by corollary \ref{extension}.\\

Recall from above that every limited *-real number $r\in{^*\IR}$ is near-standard in the sense that there exists a standard real number $s\in\IR$ with $r\approx s$, called the standard part ${^{\circ}}r:=s$ of $r$. Generalizing this, we are lead to the following 

\begin{definition}\label{near-standard} An element $v\in\IF$ is called 
\begin{itemize}
\item[i)] \emph{limited} if its norm $|v|\in{^*\IR^+}\cup\{0\}$ is limited, 
\item[ii)] \emph{near-standard} if there exists $u\in\IH$ with $u\approx v$ and we call $u$ the \emph{standard part} of $u$ and write ${^{\circ}v}:=u$. 
\end{itemize}
Furthermore we call a point in $\IP(\IF)=\IS(\IF)/{^*U(1)}$ near-standard if its equivalence class is represented by a near-standard element in $\IS(\IF)\subset\IF$. \end{definition} 

Note that every point in $\IP(\IF)$ is limited by definition and the near-standardness is independent of the chosen representative in $\IS(\IF)$. Since the standard Hilbert space $\IH$ is not a set in the non-standard model, it is important to have a non-standard characterization of all near-standard points in $\IF$. It is given by the following

\begin{proposition}\label{characterization}
An element $v\in\IF$ is near-standard if and only if it is limited and for all unlimited $L\in{^*\IN}\backslash\IN$ with $L\leq N$ we have $$\sum_{n=-N}^{-L-1}|\hat{v}(n)|^2+\sum_{n=L+1}^{+N}|\hat{v}(n)|^2\;\approx\; 0.$$ In other words, if and only if $v$ has a limited norm and is infinitesimally close to all subspaces ${^*\IC}^{2L+1}\subset\IF$ of unlimited dimension $L\leq N\in{^*\IN}\backslash\IN$. 
\end{proposition}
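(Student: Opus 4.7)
The plan is to prove both directions by translating the classical characterization of near-standard $\ell^2$-sequences into the *-finite setting of $\IF$, with overspill and underspill bridging standard cutoffs $M\in\IN$ and unlimited cutoffs $K\in{^*\IN}\setminus\IN$. The forward direction is essentially the statement that a standard element of $\IH$ has standardly convergent Parseval series, so its *-tail past any unlimited index is automatically infinitesimal; the reverse direction is the non-standard counterpart of the fact that a coefficient-wise limit of sequences with uniformly small tails converges in $\ell^2$.

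For the forward direction, suppose $v\approx w$ with $w\in\IH$. Since $w\in\ell^2(\IC)$, the standard partial sums $S_k:=\sum_{|n|\leq k}|\hat{w}(n)|^2$ converge to $\|w\|_{\IH}^2=|w|_{\IF}^2$ as $k\to\infty$. Transferring this convergence yields $S_K\approx|w|_{\IF}^2$ for every unlimited $K\leq N$, hence $\sum_{K<|n|\leq N}|\hat{w}(n)|^2\approx 0$. The required conclusion for $v$ then follows from the triangle inequality
$$\sum_{K<|n|\leq N}|\hat{v}(n)|^2\;\leq\;2|v-w|_{\IF}^2\,+\,2\sum_{K<|n|\leq N}|\hat{w}(n)|^2,$$
whose first summand on the right is infinitesimal by assumption.

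For the reverse direction, each $\hat{v}(n)$ with standard $n\in\IZ$ is limited (its modulus is bounded by $|v|_{\IF}$), so one sets $\hat{w}(n):={^{\circ}}\hat{v}(n)\in\IC$. Since ${^{\circ}}$ commutes with finite sums and with squaring of limited numbers, $\sum_{|n|\leq M}|\hat{w}(n)|^2\leq{^{\circ}}|v|_{\IF}^2$ for every standard $M$, so $w:=\sum_{n\in\IZ}\hat{w}(n)e_n$ is a well-defined element of $\IH$. To show $v\approx w$, fix an arbitrary standard $\epsilon>0$. By hypothesis the internal set $\{K\leq N:\sum_{K<|n|\leq N}|\hat{v}(n)|^2<\epsilon^2/9\}$ contains every unlimited $K\leq N$, so by underspill (using monotonicity in $K$) it contains a \emph{standard} integer $M$; enlarging $M$ if necessary one also arranges $\sum_{|n|>M,\,n\in\IZ}|\hat{w}(n)|^2<\epsilon^2/9$ using $w\in\ell^2$, and the same bound then holds for the *-tail of $w$ in $\IF$ by Parseval. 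Splitting $|v-w|_{\IF}^2$ into the head over $|n|\leq M$ and the tail over $M<|n|\leq N$, the head is infinitesimal as it is a sum of finitely many infinitesimals $|\hat{v}(n)-{^{\circ}}\hat{v}(n)|^2$, while the tail is bounded by $4\epsilon^2/9$ via the triangle inequality applied to $\hat{v}-\hat{w}$. Thus $|v-w|_{\IF}<\epsilon$ for every standard $\epsilon>0$, i.e.\ $v\approx w$.

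The one non-routine step is the underspill used in the reverse direction: it is what converts the qualitative hypothesis ``the *-tail is infinitesimal beyond \emph{every} unlimited $K$'' into the quantitative statement that there exists a \emph{standard} $M$ beyond which the tail is already below any prescribed $\epsilon^2/9$. Without this reduction the head/tail splitting collapses, since one cannot directly control a sum with an unlimited number of terms by summing finitely many infinitesimals.
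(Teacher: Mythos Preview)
Your proof is correct and follows essentially the same strategy as the paper: both identify the candidate standard part via $\hat w(n)={^{\circ}}\hat v(n)$ and reduce near-standardness to $\ell^2$-convergence of the coefficient sequence, which is then characterized by infinitesimal tails past every unlimited cutoff. The paper is terser, packaging your underspill step into an appeal to its Proposition~\ref{convergence} on the nonstandard characterization of sequential convergence ($s_n\to 0\iff s_N\approx 0$ for all unlimited $N$), whereas you spell out the head/tail splitting and the underspill explicitly.
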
 

\begin{proof} First assume that $v\in\IF$ is limited and for all unlimited $L\in{^*\IN}\backslash\IN$ with $L\leq N$ we have $\sum_{n=-N}^{-L-1}|\hat{v}(n)|^2+\sum_{n=L+1}^{+N}|\hat{v}(n)|^2\approx 0$. Then $\hat{v}(n)\in{^*\IC}$ is limited for all $-N\leq n\leq N$ and hence near-standard. We claim that $\hat{v}\approx \hat{u}\in\IH$ where we define $\hat{u}:\IZ\to\IC$ by setting $\hat{u}(n):={^{\circ}\hat{v}(n)}\in\IC$ for all $n\in\IZ$. Fix some standard $\eps>0$. Since for all unlimited *-natural $L\leq N$ it holds that $\forall k\geq\ell\geq L: \sum_{n=-k}^{-\ell-1}|\hat{v}(n)|^2+\sum_{n=\ell+1}^{+k} |\hat{v}(n)|^2<\eps,$ by the spillover principle in proposition \ref{spillover} it then follows that there must exist a standard natural number $L\in\IN$ with the same property, in particular, $\sum_{n=-k}^{-\ell-1}|\hat{v}(n)|^2+\sum_{n=\ell+1}^{+k} |\hat{v}(n)|^2<\eps$ for all standard $k,\ell\geq L$. On the other hand, since $\hat{u}(n)\approx\hat{v}(n)$, it follows that the same inequality holds when $\hat{v}(n)$ is replaced by $\hat{u}(n)$ for all $\ell<|n|\leq k$. But with this it follows that $\hat{u}\in\ell^2(\IC)=\IH$. In order to see that $\hat{u}\approx\hat{v}$, denote $\hat{v}_{\ell}\in\IF$, $\hat{u}_{\ell}\in\IH$ by requiring that $\hat{v}_{\ell}(n)=\hat{v}(n)$, $\hat{u}_{\ell}(n)=\hat{u}(n)$ if $|n|\leq\ell$ and equal to zero else. Then it follows for all $\ell\geq L$ that $|\hat{v}-\hat{u}_{\ell}|\approx |\hat{v}-\hat{v}_{\ell}| <\eps$. \\

In the opposite direction, let us assume that $v\in\IF$ is near-standard, $v\approx u\in\IH$. Since $|u-v|\approx 0$ implies that $|v|\approx|u|<\infty$, it follows that $v$ is limited. On the other hand, we have $\hat{v}(n)\approx\hat{u}(n)$ for all $-N\leq n\leq +N$. Since $\sum_{n=-\infty}^{+\infty} |\hat{u}(n)|^2<\infty$, it follows that $\sum_{n=-\ell}^{+\ell} |\hat{u}(n)|^2\to 0$ as $\ell\to\infty$, which by proposition \ref{convergence} implies that $\sum_{n=-N}^{-L-1}|\hat{u}(n)|^2+\sum_{n=L+1}^{+N}|\hat{u}(n)|^2\approx 0$. Together with $\sum_{n=-N}^{-L-1}|\hat{v}(n)|^2\approx\sum_{n=-N}^{-L-1}|\hat{u}(n)|^2$ and $\sum_{n=L+1}^{+N}|\hat{v}(n)|^2\approx\sum_{n=L+1}^{+N}|\hat{u}(n)|^2$ the claim follows. \end{proof}

After showing that the infinite-dimensional symplectic Hilbert space $\IH$ is contained (up to an error smaller than any standard number) in a symplectic vector space which is finite-dimensional in the sense of the new model, it remains to be shown that the infinite-dimensional symplectic flow $\phi^{\IH}_t:=\phi_t$ defined by the nonlinear Schr\"odinger equation can be represented by a *-finite-dimensional symplectic flow $\phi^{\IF}_t$. With the latter we mean that for every near-standard $v\in\IP(\IF)$ we have that $\phi^{\IH}_1({^{\circ}v})={^{\circ}(\phi^{\IF}_1(v))}$; in particular, the time-one flows agree up to an error which is smaller than any positive real number, $\phi^{\IH}_1\approx\phi^{\IF}_1$. \\

Before we turn to the general case, we start with the case of the free Schr\"odinger equation. 

\begin{proposition}\label{*-free NLS} The *-image ${^*\phi^0_1}$ of the time-one map $\phi^0_1$ of the free Schr\"odinger equation is a linear symplectomorphism of $^*\IH$ which restricts to a *-finite-dimensional linear symplectomorphism on $\IF\subset{^*\IH}$. Furthermore we have ${^{\circ}({^*\phi^0_1}(v))}=\phi^0_1({^{\circ}v})$ for all near-standard $v\in\IP(\IF)$. \end{proposition}

\begin{proof} Recall from proposition \ref{free-NLS} that, after identifying $\IH=L^2(S^1,\IC)$ with $\ell^2(\IC)$, the symplectic flow map $\phi^0_1$ maps $\hat{v}\in\ell^2(\IC)$ to $\phi^0_1(\hat{v})\in\ell^2(\IC)$ given by $(\phi^0_1(\hat{v}))(n)=\exp(in^2)\cdot\hat{v}(n)$ for all $n\in\IZ$; in particular, $\phi^0_1$ naturally restricts to finite-dimensional symplectic maps on finite-dimensional complex projective spaces. Hence it follows from the transfer principle that the *-image ${^*\phi^0_1}$ restricts to a *-finite-dimensional map on all *-finite-dimensional projective subspaces ${^*\CP}^{2k}\subset{^*\IP(\IH)}=\IP({^*\IH})$, $k\in{^*\IN}$. For the last statement, observe that it follows from the fact that $\phi^0_1:\IH\to\IH$ preserves the norm that $u\approx v$ implies ${^*\phi^0_1}(v)\approx{^*\phi^0_1}(u)=\phi^0_1(u)$. \end{proof}

For the general case everything furthermore relies on a finite-dimensional approximation result of the flow of $G_t$ ($F_t$), where we now crucially make use of the special form of the nonlinearity. To this end, define for the given admissible convolution kernel $\psi$ with Fourier series expansion $\psi(x)=(2\pi)^{-1/2}\sum_{n=-\infty}^{+\infty}\hat{\psi}(n)\exp(inx)$ for each $k\in\IN$ the approximating kernel $$\psi^k(x)\,:=\,\frac{1}{\sqrt{2\pi}}\sum_{n=-k}^k \hat{\psi}(n)\exp(inx)$$ and define the resulting sequence of Hamiltonians $G^k_t$ ($F^k_t$) by $$G^k_t\,:=\,F^k_t\;\circ\;\phi^0_{-t}\,\,\textrm{with}\,\,F^k_t(u)\;:=\;\frac{1}{2}\int_0^{2\pi} f(|(u*\psi^k)(x)|^2,x,t)\;dx$$ for all $k\in\IN$. Note that $F^k_t$ then defines a finite-dimensional nonlinearity in the sense of proposition \ref{finite-dim}. 

\begin{lemma}\label{approx} For each $k\in\IN$ the Hamiltonian flow $\phi^{G,k}_t$ of the Hamiltonian $G^k_t$ restricts  a finite-dimensional Hamiltonian flow on $\CP^{2k}\subset\IP(\IH)$. Furthermore the sequence of time-dependent Hamiltonians $G^k_t$ converges uniformly on $\IP(\IH)$ with all derivatives to the original Hamiltonian $G_t$ as $k\to\infty$. In particular, the same holds true for the symplectic time-one maps $\phi^{G,k}_1$ and $\phi^G_1$, and the Hofer norm $|||G|||$ of $G_t:\IP(\IH)\to\IR$ is finite. \end{lemma}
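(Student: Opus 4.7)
The plan is to verify the four claims of the lemma in order. First, the cutoff kernel $\psi^k$ has Fourier support contained in $\{-k,\dots,k\}$, so for every $u\in\IH=\ell^2(\IC)$ one has
$$(u*\psi^k)(x)\,=\,2\pi\sum_{|n|\le k}\hat\psi(n)\,\hat u(n)\,e^{inx},$$
which depends only on the Fourier coefficients $\hat u(n)$ with $|n|\le k$. Hence $F^k_t$ factors as $F^k_t=\tilde F^k_t\circ\pi_k$ through the orthogonal projection $\pi_k\colon\IH\to\IC^{2k+1}$ onto the span of $\{e_n:|n|\le k\}$. Because the free flow $\phi^0_{-t}$ acts diagonally in the Fourier basis it preserves this splitting and commutes with $\pi_k$, so $G^k_t=F^k_t\circ\phi^0_{-t}$ also factors through $\pi_k$. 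The symplectic gradient $X^{G^k}_t(u)$ therefore lies in $\IC^{2k+1}$ for every $u\in\IH$, and the Hamiltonian flow preserves the orthogonal splitting $\IH=\IC^{2k+1}\oplus(\IC^{2k+1})^{\perp}$, acting as the identity on the second summand. Descending to the projectivization, the flow preserves the projective subspace $\IP(\IC^{2k+1})=\CP^{2k}$ of complex dimension $2k$.

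For the uniform $C^m$-convergence of $G^k_t$ to $G_t$, smoothness of $\psi$ forces rapid decay of $\hat\psi$, so $\psi^k\to\psi$ in $C^m(S^1)$ for every $m\in\IN$. For $u\in\IS(\IH)$ the pointwise Cauchy-Schwarz bound
$$|\partial_x^m(u*\eta)(x)|\,=\,|\langle u,\partial_x^m\eta(x-\cdot)\rangle|\,\le\,|u|_{L^2}\,|\partial_x^m\eta|_{L^2}$$
yields $|u*\psi-u*\psi^k|_{C^m(S^1)}\le|\psi-\psi^k|_{H^m(S^1)}$ uniformly on the unit sphere. Since $u*\psi$ and $u*\psi^k$ remain in a fixed $C^0$-ball on $\IS(\IH)$ and $f$ is smooth, a chain-rule computation expresses the $j$-th Fr\'echet derivative of $F^k_t$ at $u$ along directions $v_1,\dots,v_j\in\IH$ as a sum of multilinear integrals in the convolutions $v_i*\psi^k$ and $u*\psi^k$ paired with derivatives of $f$; every factor is controlled by the same Cauchy-Schwarz estimate, giving $F^k_t\to F_t$ in $C^m$ uniformly on $\IS(\IH)\times[0,1]$ for every $m$. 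Composing with the linear isometry $\phi^0_{-t}$ preserves this convergence, and $U(1)$-invariance descends it to $\IP(\IH)$.

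For the time-one maps, the uniform $C^m$-convergence of the Hamiltonians implies the same convergence of the Hamiltonian vector fields, and differentiating the flow equation $\partial_t v=X^{G^k}_t(v)$ together with Gronwall's inequality yields $\phi^{G,k}_1\to\phi^G_1$ in $C^m$ uniformly on $\IS(\IH)$, and hence on $\IP(\IH)$. Finally, the pointwise bound above specialises to $|u*\psi|_{C^0(S^1)}\le|\psi|_{L^2}$ for every $u\in\IS(\IH)$, so $|u*\psi|^2$ takes values in the compact interval $[0,|\psi|_{L^2}^2]$; smoothness of $f$ together with its $1$-periodicity in $t$ then bounds $f$ on the compact set $[0,|\psi|_{L^2}^2]\times S^1\times[0,1]$, making $F_t$, and hence $G_t=F_t\circ\phi^0_{-t}$, uniformly bounded on $\IS(\IH)$. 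Consequently $\sup G_t-\inf G_t$ is finite uniformly in $t$ and $|||G|||<\infty$. I expect the main obstacle to be the uniform $C^m$-convergence of the Hamiltonians in the middle paragraph, where one must carefully track how the Cauchy-Schwarz bounds propagate through the multilinear derivative expressions involving $f$; once this is established, the ODE convergence and the finiteness of the Hofer norm follow by routine arguments.
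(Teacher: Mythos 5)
Your proof is correct and follows essentially the same route as the paper: the Fourier support of $\psi^k$ makes $X^{F,k}_t(u)$ land in $\IC^{2k+1}$, the sup-norm estimate $\|u*\psi-u*\psi^k\|_\infty\le\|u\|_2\|\psi-\psi^k\|_2$ drives the uniform convergence, and smoothness of $f$ carries this to all derivatives; you simply fill in the routine chain-rule and Gronwall details that the paper elides. The one small variation is your derivation of $|||G|||<\infty$ via the direct compactness bound $|u*\psi|^2\le\|\psi\|_2^2$ on $\IS(\IH)$, where the paper leaves it as an implicit consequence of uniform approximation by the finite-dimensional $G^k$; both are immediate.
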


\begin{proof} Based on the fact that the flow $\phi^0_t$ restricts to finite-dimensional flows by proposition \ref{free-NLS}, for the first statement it suffices to observe that the symplectic gradient $X^{F,k}_t$ of $F^k_t:\IH\to\IR$ given by $$X^{F,k}_t(u)=\del_1 f(|(u*\psi^k)(x)|^2,x,t)(u*\psi^k)*\psi^k$$ has vanishing Fourier coefficients, $\widehat{X^{F,k}_t(u)}(n)=0$, for $|n|>k$. Since the supremum norm of $u*\psi - u*\psi^k$ can be bounded by $$\|u*\psi - u*\psi^k\|_{\infty}\leq \|u\|_2\cdot\|\psi-\psi^k\|_2,$$ it follows from $\|\psi-\psi^k\|_2\to 0$ as $k\to\infty$ and $\|u\|_2=1$ for all $u\in \IS(\IH)$ that $u*\psi^k\to u*\psi$ uniformly as $k\to\infty$. On the other hand, since $f$ is assumed to be smooth, it immediately follows that $F^k_t(u)\to F_t(u)$ and hence $G^k_t(u)\to G_t(u)$ as $k\to\infty$, uniformly with all derivatives. \end{proof}

In order to see that this lemma immediately leads to the existence of the desired *-finite-dimensional symplectic flow $\phi^{\IF}_t$, we first observe that, by corollary \ref{extension},  the *-image of the sequence $(G^k_t)_{k\in\IN}$ provides us with a sequence of Hamiltonians ${^* G}^k_t:{^*\IH}\to{^*\IR}$ over all non-standard natural numbers $k\in{^*\IN}$ with ${^*G}^k_t(u)=G^k_t(u)$ if $k\in\IN\subset{^*\IN}$ and $u\in\IH\subset{^*\IH}$ are both standard. In the same way, the *-image of the corresponding sequence $(\phi^{G,k})_{k\in\IN}$ of finite-dimensional Hamiltonian flows defined by $\del_t\phi^{G,k}_t=X^{G,k}_t\circ\phi^{G,k}_t$ provides us with a sequence of *-finite-dimensional Hamiltonian flows ${^*\phi^{G,k}}:{^*\IR}\times{^*\IH}\to {^*\IH}$ over all non-standard natural numbers $k\in{^*\IN}$. 

\begin{remark}\label{differentiable} In order to explain the relation between both extensions, we quickly need to review the notion of differentiability in the non-standard sense. Since for every $k\in\IN$ the Hamiltonian flow map $\phi^{G,k}:\IR\times\IH\to\IH$  is differentiable with respect to $t\in\IR$ with $\del_t \phi^{G,k}_t=X^{G,k}_t\circ\phi^{G,k}_t$ where $X^{G,k}=i\nabla G^k$ is the symplectic gradient of $G^k$, it follows from the transfer principle that for every $k\in{^*\IN}$ the map ${^*\phi^{G,k}}:{^*\IR}\times {^*\IH}\to{^*\IH}$ is differentiable with respect to $t\in{^*\IR}$ in the non-standard sense and $\del_t {^*\phi^{G,k}_t}={^*X^{G,k}_t}\circ{^*\phi^{G,k}_t}$. This means that for every $(t,u)\in{^*\IR}\times{^*\IH}$ and every $\eps\in{^*\IR}^+$ there exists $\delta\in{^*\IR}^+$ such that for all $t'\in{^*\IR}$ with $|t'-t|<\delta$ we have $$\frac{|{^*\phi^{G,k}_{t'}}(u)-{^*\phi^{G,k}_t}(u)-{^*X^{G,k}_t}({^*\phi^{G,k}_t}(u))\cdot (t'-t)|}{|t'-t|}<\eps.$$ Note that for every $k\in{^*\IN}$ the symplectic gradient ${^*X^{G,k}}$ can be obtained by taking the *-extension of the sequence $(X^{G,k})_{k\in\IN}$ or be defined using the non-standard differential of $G^k$; the transfer principle ensures that in both cases one obtains the same result. \end{remark}
 
Setting $G^{\IF}:={^*G}^N$ and $\phi^{\IF}_t={^*\phi^{G,N}_t}$ for $\dim\IF=2N+1$ as well as $G^{\IH}:=G$ and $\phi^{\IH}_t=\phi_t$, using the transfer principle (in particular its consequences \ref{convergence} and \ref{extension}) the above lemma implies the proof of the following 

\begin{proposition}\label{approximation} Assume that $v\in\IF$ is nearstandard with $v\approx u\in\IH$. Then it holds that $G^{\IF}(v)\approx G^{\IH}(u)$ and $\nabla G^{\IF}(v)\approx \nabla G^{\IH}(u)$, so that $\phi^{\IF}_1(v)\approx \phi^{\IH}_1(u)$ and $|||G^{\IF}|||\approx |||G^{\IH}|||$. In particular, the infinite-dimensional flow $\phi^{\IH}_t$ can be represented without loss of information by the *-finite-dimensional flow $\phi^{\IF}_t$, $$\phi^{\IH}_1({^{\circ}v})\,=\,{^{\circ}(\phi^{\IF}_1(v))}\,\,\textrm{for all near-standard}\,\, v\in\IP(\IF).$$ \end{proposition}

\begin{proof} Here ${^{\circ}v}\in\IH$ denotes the standard part of the near-standard element $v\in\IF$ in the sense of definition \ref{near-standard}. The crucial observation is that, by proposition \ref{convergence}, it follows from the above lemma that for all \emph{unlimited} $K\in{^*\IN}\backslash\IN$ we have $G^K:={^*G}^K\approx{^*G}$ as well as $\nabla G^K\approx {^*\nabla G}$ on ${^*\IP(\IH)}=\IP({^*\IH})$. Note that this implies the statement about the Hofer norm as well as for the flow, $\phi^{G,\IF}_1=\phi^{{^*G},N}_1\approx\phi^{^*G}_1={^*(\phi^{G,\IH}_1)}$. For the last statement one could alternatively use that $\phi^{G,k}_t\to\phi^G_t$ as $k\to\infty$ and again apply proposition \ref{convergence}. In order to finish the proof it remains to observe that, as $G$ is continuous (in the standard way), for every $u\in\IH$ and every $\eps\in\IR^+$ there exists some $\delta\in\IR^+$ such that $$\forall v\in\IH: |u-v|<\delta\implies |G(u)-G(v)|<\eps.$$ Applying the transfer principle only to the last statement, it follows that $$\forall v\in{^*\IH}: |u-v|<\delta\implies |{^*G}(u)-{^*G}(v)|<\eps$$ with ${^*G}(u)=G(u)$ due to proposition \ref{extension}. If $v\in\IF\subset{^*\IH}$ satisfies $u\approx v$, that is, $|u-v|<\delta$ for all $\delta\in{^*\IR}^+$, we hence get that $|G(u)-{^*G}(v)|<\eps$ for all $\eps\in\IR^+$, that is, $$G^{\IH}(u)\approx {^*G}(v)\approx G^{\IF}(v);$$ for the gradient and the flow map the argument is the same.  \end{proof}

After showing how the infinite-dimensional flow defined by the nonlinear Schr\"odinger equation can be represented by symplectic flow which is finite-dimensional in the sense of the non-standard model, and collecting the key results from finite-dimensional Floer theory, we now show how the latter results can be elegantly used to prove our main theorem. The key tool in order to achieve this is the afore-mentioned transfer principle of non-standard model theory which states that every *-finite-dimensional object can be treated like a finite-dimensional object. In particular, by applying the transfer principle to proposition \ref{strips}, without any extra work we can immediately establish the existence of Floer strips in the *-finite-dimensional projective space $\IP(\IF)={^*\CP}^{2N}$ for the Hamiltonian $G^{\IF}_t={^*G}^N_t:\IP(\IF)\to{^*\IR}$. 

\begin{corollary}\label{*-strips} Fix some unlimited $T\in{^*\IR^+}$. For every $n\in\IN$ there exists a Floer strip $\tilde{u}^{\IF}_n:{^*\IR}\times {^*[0,1]}\to{^*\CP}^{2N}=\IP(\IF)$ satisfying the periodicity condition $\tilde{u}^{\IF}_n(\cdot,1)={^*\phi^0_1}(\tilde{u}^{\IF}_n(\cdot,0))$, the asymptotic condition $\tilde{u}^{\IF}_n(s,\cdot)\to u^0_n$ as $s\to\pm\infty$, and the perturbed Cauchy-Riemann equation $$0=\CR^T_G\tilde{u}^{\IF}_n=\CR\tilde{u}^{\IF}_n - \varphi_T(s)\cdot \nabla G^{\IF}_t(\tilde{u}^{\IF}_n).$$ Furthermore, the energy $E(\tilde{u}^{\IF}_n)$ is bounded by $2|||G^{\IF}|||<\pi/2$. \end{corollary}

\begin{proof} Since we know that for every $k\in\IN$, $n\in\IN$, $T\in\IR^+$ there exists a Floer strip $\tilde{u}^k_{n,T}:\IR\times [0,1]\to\CP^{2k}$, it follows from the transfer principle that for every $k\in{^*\IN}$, $n\in{^*\IN}$ and $T\in{^*\IR^+}$ there exists a map $\tilde{u}^k_{n,T}: {^*\IR}\times {^*[0,1]}\to{^*\CP}^{2k}$ satisfying the corresponding properties in the non-standard sense, see the discussion below. Note that for every $k\in{^*\IN}$, $n\in{^*\IN}$ and $T\in{^*\IR^+}$ the tuple $(k,n,T,\tilde{u}^k_{n,T})$ is an element in the *-extension ${^*\IM}$ of the universal moduli space $\IM$, defined as the set of all tuples $(k,n,T,\tilde{u})$, where $k\in\IN$, $n\in\IN$, $T\in\IR^+$ and $\tilde{u}=\tilde{u}^k_{n,T}$ is a Floer strip as in proposition \ref{strips}. \end{proof}

\begin{remark}\label{*-differentiable} We emphasize that all the claimed properties of the map $\tilde{u}^{\IF}_n:{^*\IR}\times {^*[0,1]}\to\IP(\IF)$ are to be understood in the \emph{non-standard sense}:
\begin{itemize}
\item[i)] The asymptotic condition $\tilde{u}^{\IF}_n(s,\cdot)\to u^0_n$ as $s\to\pm\infty$ means that for all $\eps\in{^*\IR}^+$ there exists $R\in{^*\IR^+}$ such that $d(\tilde{u}^{\IF}_n(s,t),u^0_n)<\eps$ if $|s|>R$. In particular, even for standard $\eps$ the corresponding $R$ will in general be an unlimited *-real number. 
\item[ii)] For the validity of the perturbed Floer equation $\CR\tilde{u}^{\IF}_n - \varphi_T(s)\cdot \nabla G^{\IF}_t(\tilde{u}^{\IF}_n)=0$ with $\CR\tilde{u}^{\IF}=\del_s\tilde{u}^{\IF}+i\del_t\tilde{u}^{\IF}$ one has to observe that all derivatives are to be considered in the non-standard sense. In order to make this fully explicit, let us observe that, just as in the standard case, $\IP(\IF)={^*\CP^{2N}}$ can be covered by $2N+1$ natural coordinate charts $\varphi_m:{^*\IC}^{2N}\to{^*\CP}^{2N}$, $0\leq m\leq 2N$ by setting $z_m:=1$ in $[z_0:\ldots:z_{2N}]\in{^*\CP}^{2N}$. By transfer we know that for every $(s,t)\in{^*\IR}\times{^*[0,1]}$ there exists $0\leq m\leq 2N$ and $r\in{^*\IR^+}$ such that $|(s',t')-(s,t)|<r$ implies $\tilde{u}(s',t')\in\varphi_m({^*\IC}^{2N})\subset{^*\CP}^{2N}$. Then $\tilde{u}=\tilde{u}^{\IF}_n$ is differentiable in the non-standard sense at $(s,t)\in{^*\IR}\times{^*[0,1]}$ if and only if $\tilde{u}_m:=\varphi_m^{-1}\circ\tilde{u}: {^*B}^2_r(s,t)\to{^*\IC}^{2N}$ has this property, where ${^*B}^2_r(s,t)=\{(s',t')\in{^*\IR}\times{^*[0,1]}: |(s',t')-(s,t)|<r\}$. The derivatives $\del_s\tilde{u}_m(s,t)$, $\del_t\tilde{u}_m(s,t)\in{^*\IC}^{2N}$ are then characterized as follows: for all $\eps\in{^*\IR}^+$ there exists $\delta(\leq r)\in{^*\IR}^+$ such that for all $(s',t')\in{^*B}^2_{\delta}(s,t)$ we have 
\begin{eqnarray*}
\Big|\frac{\tilde{u}_m(s',t)-\tilde{u}_m(s,t)}{s'-s}\;-\;\del_s\tilde{u}_m(s,t)\Big|<\eps,\\
\Big|\frac{\tilde{u}_m(s,t')-\tilde{u}_m(s,t)}{t'-t}\;-\;\del_t\tilde{u}_m(s,t)\Big|<\eps.
\end{eqnarray*}
\item[iii)] In the same way as the energy of a Floer strip is a map $E:\IM\to\IR^+\cup\{0\}$ which assigns to every $(k,n,T,\tilde{u})\in\IM$ the energy $E(\tilde{u})$ defined in proposition \ref{strips}, its *-extension ${^*E}$ assigns to every $(k,n,T,\tilde{u})\in{^*\IM}$ a non-negative *-real number $E(\tilde{u}):={^*E}(k,n,T,\tilde{u})$. 
\end{itemize}\end{remark}

\section{Floer strips in infinite-dimensional projective spaces}

In this section we prove the main theorem. While we have already established that, for every $n\in\IN$ and every $T\in{^*\IR}^+$, there exists a non-standard map $\tilde{u}=\tilde{u}^{\IF}_{n,T}:{^*\IR}\times {^*[0,1]}\to\IP(\IF)$, note that this does not immediately imply the existence of a Floer strip in $\IP(\IH)$. Furthermore, even when we would know that the image of $\tilde{u}$ would be contained in $\IP(\IH)$, note that the asymptotic condition and the first derivatives appearing in the Cauchy-Riemann equation are only to be understood in the non-standard sense. In particular, the derivatives of the *-smooth map $\tilde{u}$ could a priori have unlimited norm at every point. Applying the transfer principle to well-known fundamental properties of finite-dimensional holomorphic curves, as first step we however can prove   

\begin{proposition}\label{no-bubbling} For every $n\in\IN$ and $T\in{^*\IR^+}\cup\{0\}$ and every $\ell\in\IN$ the $\ell$.th derivative of the map $\tilde{u}=\tilde{u}^{\IF}_{n,T}:{^*\IR}\times {^*[0,1]}\to\IP(\IF)$ has a limited norm at every point $(s,t)\in{^*\IR}\times{^*[0,1]}$. \end{proposition}

For the proof we use a non-standard version of the classical bubbling-off argument from (\cite{MDSa}, chapter 4) together with elliptic regularity from (\cite{MDSa}, appendix B). Apart from the fact that the a priori estimate for bubbling, the elliptic estimate used to bound higher Sobolev norms as well as the Sobolev embedding theorems have analogues for non-standard maps to the *-finite-dimensional projective space $\IP(\IF)$ by the transfer principle, the crucial observation for the proof is that the constants appearing in the used inequalities are still limited numbers. The proof crucially relies on the following 

\begin{lemma} At all points $(s,t)\in{^*\IR}\times{^*[0,1]}$ the first derivatives $\del_s\tilde{u}(s,t)$, $\del_t\tilde{u}(s,t)$ of $\tilde{u}=\tilde{u}^{\IF}$ are limited. \end{lemma}

Here we say that $\del_s\tilde{u}(s,t)$, $\del_t\tilde{u}(s,t)$ of $\tilde{u}$ are limited if and only if they have an limited norm, where norm refers to the non-standard version of Riemannian metric on $\IP(\IF)={^*\CP}^{2N}$ which exists by transfer. In order to avoid working with non-standard versions of Riemannian metrics on manifolds, note that we can alternatively directly work in the local coordinate charts $\varphi_m:{^*\IC}^{2N}\to{^*\CP}^{2N}$, $0\leq m\leq 2N$ from remark \ref{*-differentiable}. Then the statement is equivalent to requiring that the first derivatives $\del_s\tilde{u}_m(s,t)$,  $\del_t\tilde{u}_m(s,t)$ of $\tilde{u}_m:=\varphi_m^{-1}\circ\tilde{u}$ are limited with respect to the standard norm $|(z_1,\ldots,z_{2N})|=\sum_{n=1}^{2N} |z_n|^2\in{^*\IR^+}\cup\{0\}$ on ${^*\IC}^{2N}$, see the last section. 

\begin{proof} For the proof we use a non-standard version of the classical bubbling-off argument from (\cite{MDSa}, chapter 4). In order to show that the supremum norm of $\del_s\tilde{u}$ is limited, we are now essentially going to use that the energy $E(\tilde{u})$ of $\tilde{u}$ is strictly smaller than the minimal energy of a holomorphic sphere in $\IP(\IF)$. Note that, since $\CR\tilde{u}-\varphi_T(s)\nabla G^{\IF}_t(\tilde{u})=0$ and $\nabla G^{\IF}$ is limited due to proposition \ref{approximation}, the latter implies that also the supremum norm of $\del_t\tilde{u}$ is limited\\

To the contrary, assume that $\max\{|\del_s\tilde{u}(z)|:z\in{^*\IR}\times{^*[0,1]}\}=C$ is an unlimited *-real number and choose $z_0\in{^*S}^2$ such that $|\del_s\tilde{u}(z_0)|=C$. Note that by transfer the maximum exists, due to the asymptotic condition and we assume without loss of generality that $z_0=(0,1/2)$. As in the classical bubbling-off proof we define $\tilde{v}:{^*B}^2_{\sqrt{C}}(0)\to\IP(\IF)$ by $\tilde{v}(z):=\tilde{u}(z/C+z_0)$, such that $|\del_s\tilde{v}(0)|=1$ and $|\del_s\tilde{v}(z)|\leq 1$ for all $z\in{^*B}^2_{\sqrt{C}}(0)$.  Because $C\in{^*\IR}^+$ was assumed to be unlimited, note that ${^*B}^2_{\sqrt{C}}(0)\subset{^*\IC}$ is a disk of unlimited radius; in particular, it contains the full complex plane $\IC$ as a subset. For each $r\in[0,C]\subset{^*\IR}^+$ define $\gamma_r: {^*S}^1\to\IP(\IF)$ by $\gamma_r(\theta):=v(re^{i\theta})$. For every $k\in\IN$ denote by $\ell_k$ the map which assigns to each loop $\gamma:S^1\to\CP^{2k}$ its length with respect to the canonical Riemannian metric and $E_{\omega}(v):=\int v^*\omega$ the symplectic area of a disk map $v: B^2_r(0)\to\CP^{2k}$. By taking the *-extension of the resulting sequence $(\ell_k)_{k\in\IN}$, it follows that there also exists a map $\ell:=\ell_N$ which assigns to every loop $\gamma_r:{^*S}^1\to\IP(\IF)$ its length with respect to the *-Riemannian metric on $\IP(\IF)={^*\CP}^{2N}$; in the same way the symplectic area $E_{\omega}(\tilde{v})$ of a non-standard map $\tilde{v}: {^*B}^2_r(0)\to\IP(\IF)={^*\CP}^{2N}$ is defined using the *-extension. Writing $r\lessapprox s$ if $r<s$ or $r\approx s$, we can formulate the following  \\

\noindent\emph{Claim: There exists some $\rho\in[\sqrt{C}/2,\sqrt{C}]$ such that $\ell(\gamma_{\rho})\approx 0$ and for the symplectic area of the restricted map $\tilde{v}_{\rho}=\tilde{v}:{^*B}^2_{\rho}(0)\to\IP(\IF)$ we have $E_{\omega}(v_{\rho})\leq 2|||G|||<\pi$. Furthermore we have the a priori estimate $|\del_s\tilde{v}(0)|^2\lessapprox E_{\omega}(v_{\rho})/(\rho^2\pi)$.}\\

We prove this claim by proving the corresponding standard result. Fix $k\in\IN$. By abuse of notation, let us denote by $\tilde{u}:\IR\times [0,1]\to\CP^{2k}$ a Floer strip as in proposition \ref{strips} and define $\tilde{v}$ as above; further let $C>0$ denote any positive real number. Setting $\tilde{w}(r,\theta)=\tilde{v}(r e^{i\theta})$ and using the finiteness of the $C^1$-norm of $G$, an easy computation shows that $$E_{\omega}(\tilde{v})\,-\,\int_{B^2_{1/\sqrt{C}}(z_0)} |\del_s\tilde{u}|^2+|\del_t\tilde{u}-\varphi_T(s)\nabla G_t(\tilde{u})|^2\;ds\;dt\,\to\, 0$$ and hence $$\int_0^{\sqrt{C}}\int_0^{2\pi} |\del_{\theta}\tilde{w}|^2\;rd\theta\;dr\,-\,\int_{B^2_{1/\sqrt{C}}(z_0)} |\del_s\tilde{u}|^2+|\del_t\tilde{u}-\varphi_T(s)\nabla G_t(\tilde{u})|^2\;ds\;dt$$ converges to $0$ as $C\to\infty$. \\

Together with $E(\tilde{u})\leq 2|||G|||$ and Cauchy-Schwarz, this implies that $$(2\pi)^{-1}\cdot \int_{\sqrt{C}/2}^{\sqrt{C}}\Big(\int_0^{2\pi}|\del_{\theta}\tilde{w}|d\theta\Big)^2\;dr\,\leq\,\int_{\sqrt{C}/2}^{\sqrt{C}}\int_0^{2\pi} |\del_{\theta}\tilde{w}|^2\;d\theta\;rdr<\pi$$ for $C>0$ sufficiently large. In particular, by setting $$\ell^k_{\min}:=\min\{\ell(\gamma_r):\;r\in [\sqrt{C}/2,\sqrt{C}]\},$$ it follows that $\ell^k_{\min}\leq \sqrt{2\pi^2/(\sqrt{C}/2))}\to 0$ as $C\to\infty$; in other words, for every $\eps>0$ there exists $C_0>0$ such that $\ell^k_{\min}<\eps$ if $C\geq C_0$. Since $C_0$ can be chosen to be independent of $k\in\IN$ and every unlimited $C\in{^*\IR}^+$ is greater than any standard $C_0$, it follows from transfer that $\ell^N_{min}\approx 0$. In order to finish the proof of the claim, for the result on the symplectic area it suffices to observe that by the first limit we get $E_{\omega}(\tilde{v})\lessapprox 2|||G|||<\pi$ by employing corollary \ref{*-strips}. And finally, for the a priori estimate, we just need to observe that $$\CR\tilde{v}\,=\,-C^{-1}\varphi_T(s)\nabla G_t(\tilde{v})\,\to\, 0\,\,\textrm{as}\,\,C\to\infty,$$ so that the result follows as in step 2 in the proof of proposition \ref{step1} by replacing $\nabla^{\ell}_{\perp} G^k$ by $C^{-1}\varphi_T(s)\nabla G_t$ (and $\tilde{u}^{\ell}_{\perp}$ by $\tilde{v}$). \\

In order to finish the proof of the lemma we observe that, due to the fact that $\gamma_{\rho}$ has infinitesimal length, there exists a filling disk $\tilde{\gamma}_{\rho}:{^*B}^2_{1/\rho}(0)\to\IP(\IF)$ with $E_{\omega}(\tilde{\gamma}_{\rho})\approx 0$. This is a consequence of the fact that every sufficiently small loop $\gamma: S^1\to\CP^{2k}$ has a unique local filling $\tilde{\gamma}:B^2_1(0)\to\CP^{2k}$, that is, there exist $\ell_{\max}>0$ and $c>0$ such that $E_{\omega}(\tilde{\gamma})\leq c\ell(\gamma)^2$ if $\ell(\gamma)\leq\ell_{\max}$, see the proof of lemma 4.2.3 in \cite{MDSa}. Since $\tilde{v}_{\rho}$ and $\tilde{\gamma}_{\rho}$ match on their boundaries, it follows from transfer that $E_{\omega}(\tilde{v}_{\rho})+E_{\omega}(\tilde{\gamma}_{\rho})=m\pi$ for some $m\in{^*\IN}$. But since $E_{\omega}(\tilde{v}_{\rho})+E_{\omega}(\tilde{\gamma}_{\rho})<\pi$, it follows that $m=0$, in particular, $E_{\omega}(\tilde{v}_{\rho})\approx 0$. Applying now the a priori estimate $|\del_s\tilde{v}(0)|^2\lessapprox E_{\omega}(v_{\rho})/(\rho^2\pi)$, it follows that $\del_s\tilde{v}(0)\approx 0$ - in contradiction to $|\del_s\tilde{v}(0)|=1$. \end{proof}

\begin{proof}\emph{(of the proposition)} With the help of the above lemma, we can now give the proof of proposition \ref{Floer-smooth}. As for the definition of the differential in the non-standard context in remark \ref{*-differentiable}, we are going to make use of the fact that $\IP(\IF)={^*\CP^{2N}}$ can be covered by $2N+1$ natural coordinate charts $\varphi_m:{^*\IC}^{2N}\to{^*\CP}^{2N}$, $0\leq m\leq 2N$ by setting $z_m:=1$ in $[z_0:\ldots:z_{2N}]\in{^*\CP}^{2N}$. Fix $(s,t)\in{^*\IR}\times{^*[0,1]}$. By transfer there exists $0\leq m\leq 2N$ and $r\in{^*\IR^+}$ with $r\leq 1$ such that $|(s',t')-(s,t)|<r$ implies $\tilde{u}(s,t)\in\varphi_m({^*\IC}^{2N})\subset {^*\CP}^{2N}$, and we define $\tilde{u}_m:=\varphi_m^{-1}\circ\tilde{u}: {^*B}^2_r(s,t)\to{^*\IC}^{2N}$. Note that, since the first derivatives of $\tilde{u}$ are limited, the radius $r$ can indeed be chosen to be a standard positive number. For the proof we have to show that the $C^{\ell}$-norm of $\tilde{u}_m$ is limited for all standard $\ell\in\IN$.\\

Note that, since the $C^{\ell}$-norm $\|\cdot\|_{C^{\ell}}$ is a map which assigns to every $\ell$-times differentiable map from a closed two-dimensional ball to $\IC^{2k}$ an element in $\IR^+\cup\{0\}$, it follows that its *-extension ${^*\|\cdot\|_{C^{\ell}}}$ assigns to $\tilde{u}_m:{^*B}^2_r(s,t)\to{^*\IC}^{2N}$ a non-negative *-real number. For this we use that $\tilde{u}$ and hence $\tilde{u}_m$ is smooth in the non-standard sense; furthermore we know that ${^*\|\tilde{u}_m\|_{C^{\ell}}}\in{^*\IR^+}$. Since from the lemma we know that the maximum norms of $\del_s\tilde{u}_m$ and $\del_t\tilde{u}_m$ are limited numbers and the maximum is attained, we already know that ${^*\|\tilde{u}_m\|_{C^1}}$ is limited. In order to show that ${^*\|\tilde{u}_m\|_{C^{\ell}}}$ is a limited number for all $\ell\in\IN$, we apply the transfer principle to the classical elliptic regularity result, together with proposition \ref{approximation}. For this we fix some standard $p>2$ and introduce for every standard $\ell\geq 1$ the *-extension of the Sobolev $H^{\ell,p}$-norm ${^*\|\cdot\|_{\ell,p}}={^*\|\cdot\|_{H^{\ell,p}}}$ which by transfer assigns to $\tilde{u}_m$ a non-negative *-real number ${^*\|\tilde{u}_m\|_{\ell,p}}$. By applying the transfer principle to the well-known Sobolev embedding theorem relating the Sobolev $H^{\ell,p}$-norms with the $C^{\ell}$-norms for different $\ell\in\IN$, note that for all $\ell'\leq\ell-2/p$ we have $${^*\|\tilde{u}_m\|_{C^{\ell'}}}\leq c_0\cdot{^*\|\tilde{u}_m\|_{H^{\ell,p}}}\,\,\textrm{with a standard constant}\,\,c_0\in\IR^+.$$ For the latter we use that the constant $c_0=c_0(\ell,p)$ is independent of the dimension of the target space. \\

We now prove by induction that ${^*\|\tilde{u}_m\|_{\ell,p}}$ is a limited number for all standard $\ell\geq 1$. For the induction start, note that it follows from transfer and $r\leq 1$ that $${^*\|\tilde{u}_m\|_{H^{1,p}}}\leq \pi^{1/p}\cdot {^*\|\tilde{u}_m\|_{C^1}}$$ is limited. Note that here and in what follows we use the computation rules from proposition \ref{limited-infinitesimal}. For the induction step, let us assume that ${^*\|\tilde{u}_m\|_{\ell,p}}$ is limited. Note that $\CR\tilde{u}=\varphi_T(s)\cdot\nabla G^{\IF}_t(\tilde{u})$ is equivalent to  $\CR\tilde{u}_m=\eta$ with $\eta=\varphi_T(s)\cdot (\varphi_m^*\nabla G^{\IF}_t)(\tilde{u}_m)$; in particular ${^*\|\eta\|_{\ell,p}}$ is limited if and only if the $H^{\ell,p}$-norm of $\nabla G^{\IF}_t(\tilde{u})$ is limited. Since by lemma \ref{approx} we have for all $\ell\in\IN$ that $\|\nabla G^k\|_{C^{\ell}}\to\|\nabla G\|_{C^{\ell}}$ as $k\to\infty$, it follows from proposition \ref{convergence} that ${^*\|\nabla G^{\IF}\|_{C^{\ell}}}\approx\|\nabla G\|_{C^{\ell}}\in\IR^+\cup\{0\}$ is limited.  By applying the transfer principle to the second inequality in (\cite{MDSa}, proposition B.1.7) we have $${^*\|\nabla G^{\IF}_t(\tilde{u})\|_{H^{\ell,p}}}\leq c_1({^*\|\nabla G^{\IF}_t\|_{C^{\ell}}}+1){^*\|\tilde{u}\|_{H^{\ell,p}}}$$ with a standard and hence limited constant $c_1\in\IR^+$; for the latter we again use that the constant in (\cite{MDSa}, proposition B.1.7) is independent of the dimension of the target space. Since ${^*\|\tilde{u}\|_{\ell,p}}$ is limited, it follows that the $H^{\ell,p}$-norm of $\nabla G^{\IF}_t(\tilde{u})$ and hence ${^*\|\eta\|_{\ell,p}}$ is limited. In order to complete the induction step, we apply the transfer principle to the local regularity for the $\CR$-operator in (\cite{MDSa}, theorem B.3.4) in order to obtain $${^*\|\tilde{u}_m\|_{\ell+1,p}}\leq c_2 \big({^*\|\CR\tilde{u}_m\|_{\ell,p}}+{^*\|\tilde{u}_m\|_p}\big).$$ Since the constant $c_2$ in (\cite{MDSa}, theorem B.3.4) is again independent of the dimension of the target space, it follows that we can still use the same standard $c_2\in\IR^+$, which together with the limitedness of ${^*\|\CR\tilde{u}_m\|_{\ell,p}}={^*\|\eta\|_{\ell,p}}$ and ${^*\|\tilde{u}_m\|_p}\leq{^*\|\tilde{u}_m\|_{\ell,p}}$ proves that ${^*\|\tilde{u}_m\|_{\ell+1,p}}$ is still limited. \end{proof}

As the next step we show that the non-standard Floer strip $\tilde{u}=\tilde{u}^{\IF}_n:{^*\IR}\times {^*[0,1]}\to\IP(\IF)$ from proposition \ref{*-strips} is near-standard in the sense that for all $(s,t)\in{^*\IR}\times{^*[0,1]}$ the point $\tilde{u}(s,t)\in\IP(\IF)$ is near-standard in the sense of definition \ref{near-standard}. In particular, after applying the standard part map from definition \ref{near-standard} and defining $({^{\circ}\tilde{u}})(s,t):={^{\circ}(\tilde{u}(s,t))}$, we obtain a map $\tilde{u}^{\IH}_n:={^{\circ}\tilde{u}}:\IR\times [0,1]\to\IP(\IH)$. We want to emphasize that our near-standardness proof uses the limitedness of the non-standard derivatives of $\tilde{u}$, that is, it relies on a bubbling-off argument.   
 
\begin{proposition}\label{step1} For every $n\in\IN$ the map $\tilde{u}^{\IF}_n:{^*\IR}\times {^*[0,1]}\to\IP(\IF)$ is near-standard. \end{proposition}

As with our last proposition we will prove this proposition by combining non-standard results which are obtained by applying the transfer principle to standard results from finite dimensions. For some standard $T>0$ and $n\in\IN$ as well as $k\geq\ell\geq n$ let $\tilde{u}^k_n=\tilde{u}^k_{n,T}:\IR\times [0,1]\to\CP^{2k}$ be a Floer strip as in proposition \ref{strips} and consider $\CP^{2\ell}\subset\CP^{2k}$. As in the discussion about the case of finite-dimensional nonlinearities, we know that, for $T>0$ sufficiently small, the Floer strip $\tilde{u}=\tilde{u}^k_{n,T}$ sits in a tubular neighborhood of $\CP^{2\ell}$ in $\CP^{2k}$. Denoting by $\tilde{u}^{\ell}=\pi_{\ell}\circ\tilde{u}$ the canonical projection of the Floer strip onto $\CP^{2\ell}\subset\CP^{2k}$, we claim that we can again write $\tilde{u}$ as a pair of maps, $$\tilde{u}=(\tilde{u}^{\ell}_{\perp},\tilde{u}^{\ell}):\IR\times [0,1]\to\IC^{2k-2\ell}\times \CP^{2\ell},$$ where $u^{\ell}_{\perp}$ remembers the normal component. More precisely, $u^{\ell}_{\perp}$ shall be viewed as a section in the pull-back $(u^{\ell})^*N\to\IR\times [0,1]$ of the normal bundle of $\CP^{2\ell}\subset\CP^{2k}$ which is unitarily trivial even after applying the natural identifications, see the discussion about finite-dimensional nonlinearities. \\

The proof of proposition \ref{step1} relies on non-standard versions of the following lemmata.

\begin{lemma}\label{L^2-Hofer} The $L^2$-norm of $\del_s \tilde{u}^{\ell}_{\perp}$ can be bounded from above in terms of the Hofer norm, $$\|\del_s \tilde{u}^{\ell}_{\perp}\|_2\,\leq\,2\;||| G^k - G^{\ell} |||.$$ \end{lemma} 

\begin{proof} The proof of this lemma builds on lemma 8.1.6, remark 8.1.7 and the proof of theorem 9.1.1 in \cite{MDSa}; although they only treat the case where the symplectomorphism $\phi$ is the identity, we claim that everything generalizes immediately to the case of $\phi=\phi^0_1$. Introducing the energies $E(\tilde{u})$, $E(\tilde{u}^{\ell})$, $E(\tilde{u}^{\ell}_{\perp})$ to be the $L^2$-norms of the corresponding partial derivatives $\del_s\tilde{u}$, $\del_s \tilde{u}^{\ell}$ and $\del_s \tilde{u}^{\ell}_{\perp}$, we clearly have $E(\tilde{u})=E(\tilde{u}^{\ell})+E(\tilde{u}^{\ell}_{\perp})$. On the other hand, following lemma 8.1.6 in \cite{MDSa}, we know that 
\begin{eqnarray*} 
E(\tilde{u})&=&\int \tilde{u}^*\omega + \int R_{G^k}(\tilde{u})\;ds\wedge dt,\\
E(\tilde{u}^{\ell})&\geq&\int (\tilde{u}^{\ell})^*\omega + \int R_{G^k}(\tilde{u}^{\ell})\;ds\wedge dt,
\end{eqnarray*}
with $R_G$ denoting the corresponding Hamiltonian curvature form in the sense of (\cite{MDSa}, 8.1). Note that the first summands in both (in)equalities are indeed zero due to homotopical reasons and in the second case we indeed just expect an inequality, as in general $\tilde{u}^{\ell}$ itself does not satisfy the Floer equation. On the other hand, it is easy to see from the definition of the Hamiltonian curvature that $$R_{G^k}(\tilde{u}^{\ell})=R_{G^{\ell}}(\tilde{u}^{\ell})=R_{G^{\ell}}(\tilde{u}).$$ Since $R_{G^k}-R_{G^{\ell}}=R_{G^k-G^{\ell}},$ we summarizing obtain 
\begin{eqnarray*} 
E(\tilde{u}^{\ell}_{\perp}) &=& E(\tilde{u})\;-\; E(\tilde{u}^{\ell})\\
&\leq& \int R_{G^k}(\tilde{u})\;ds\wedge dt\,-\, \int R_{G^k}(\tilde{u}^{\ell})\;ds\wedge dt\\
&=& \int R_{G^k-G^{\ell}}(\tilde{u})\;ds\wedge dt. 
\end{eqnarray*}
Following remark 8.1.7 and the proof of 9.1.1 in \cite{MDSa}, we know that the last expression can be bounded by the Hofer norm of the Hamiltonian curvature of $G^k-G^{\ell}$, which itself agrees with $2\;||| G^k - G^{\ell} |||.$ \end{proof}
 
In the case of finite-dimensional nonlinearities with $\supp(\hat{\psi})\subset\{-\ell,\ldots,+\ell\}$, note that, instead of using Liouville's theorem as in section 4, we can alternatively employ the above lemma to prove that $E(\tilde{u}^{\ell}_{\perp})=0$ which in turn again immediately implies $\tilde{u}^{\ell}_{\perp}=0$. 

\begin{lemma}\label{sup-L^2} The supremum norm of $\del_s \tilde{u}^{\ell}_{\perp}$ can be bounded in terms of its $L^2$-norm and the $C^1$-norm of the differential $T\tilde{u}$ of the Floer strip $\tilde{u}$, 
$$\|\del_s \tilde{u}^{\ell}_{\perp}\|_{\infty}^3\,\leq\,\frac{4}{\sqrt{\pi}}\cdot \|\del_s \tilde{u}^{\ell}_{\perp}\|_2\cdot \|T\tilde{u}\|_{C^1}^2.$$ \end{lemma}

\begin{proof} The proof of this lemma follows from elementary estimates for the integral of the non-negative function $f=|\del_s \tilde{u}^{\ell}_{\perp}|^2:\IR\times [0,1]\to\IR$. Note that over the disk of radius $\|f\|_{\infty}/(2\|Tf\|_{\infty})$ around any maximum, the integral of $f$ can be bounded from below by $\|f\|_{\infty}/2\cdot R^2\pi$, which yields that $$\|f\|_{\infty}^3\,\leq\,\frac{8}{\pi}\cdot\|f\|_1\cdot\|Tf\|_{\infty}^2.$$ Computing all norms of $f$ in terms of the corresponding norms of $\del_s \tilde{u}^{\ell}_{\perp}$ gives the above estimate. \end{proof}

Finally, we use the admissibility of the smoothing kernel to deduce the following nondegeneracy result.

\begin{lemma}\label{nondegenerate} For $w=w^{\ell}:=\tilde{u}^{\ell}_{\perp}(s,t)$ we have that $$|\phi^0_1(w)-w|\geq \delta \cdot |w|,$$ where $\delta>0$ denotes the admissibility threshold. \end{lemma}      

\begin{proof} Since the underlying smoothing kernel is assumed to be admissible, it follows that there exists some $\delta>0$ such that $|\exp(im^2)-1|<\delta$ implies that $\hat{\psi}(m)=0$. On the other hand, by remark \ref{Liouville-2}, it follows for all $m>\ell$ that $\hat{\psi}(m)=0$ implies that $\pi_m\circ \tilde{u}^{\ell}_{\perp}=0$, where $\pi_m:\IC^{2k-2\ell}\to\IC$ denotes the projection onto the $m$.th factor. In other words, after setting $w=w^{\ell}:=\tilde{u}^{\ell}_{\perp}(s,t)$ with $w=(w_{\ell+1},\ldots,w_n)$, it follows that $w_m=0$ in the case that $|\exp(im^2)-1|<\delta$. Together with the proof of proposition \ref{free-NLS} it then follows that $$|\phi^0_1(w)-w|^2\,=\,\sum_{m=-k}^{-\ell-1} |\exp(im^2)-1|^2 |w_m|^2 + \sum_{m=\ell+1}^k |\exp(im^2)-1|^2 |w_m|^2$$ is greater or equal than $\delta^2 \cdot |w|^2$. \end{proof}

\begin{proof}\emph{(of the proposition)} For the proof we essentially use that, \emph{very informally speaking}, $G^{\IF}_t:\IP(\IF)\to{^*\IR}$ agrees up to an infinitesimal error with $G_t=G^{\IH}_t$. Together with the minimal surface property of holomorphic curves used for the finite-dimensional case, this again implies that the Floer strip $\tilde{u}:{^*\IR}\times {^*[0,1]}\to{^*\CP}^{2N}=\IP(\IF)$ has to be infinitesimally close to $\IP(\IH)$ and hence near-standard. \\ 

In order to make this idea precise we use the characterization of near-standardness in proposition \ref{characterization}. That is, we need to prove that every point in the image of $\tilde{u}=\tilde{u}^{\IF}_{n,T}:{^*\IR}\times {^*[0,1]}\to\IP(\IF)$ is infinitesimally close to ${^*\CP}^{2L}\subset{^*\CP}^{2N}=\IP(\IF)$ \emph{for every unlimited} $L\leq N\in{^*\IN}\backslash\IN$. Note that we do not need to prove that the points on the Floer strip are limited, as every point in $\IP(\IF)=\IS(\IF)/{^*U}(1)$ is limited, see remark \ref{*-spheres}. We emphasize that for our proof we employ the result of proposition \ref{no-bubbling}. \\

Let us fix $n\in\IN$ and $T\in{^*\IR^+}$ with corresponding Floer map $\tilde{u}=\tilde{u}^{\IF}_{n,T}:{^*\IR}\times {^*[0,1]}\to{^*\CP}^{2N}=\IP(\IF)$. As in the case of finite-dimensional nonlinearities we assume without loss of generality that for every unlimited $L\leq N\in{^*\IN}\backslash\IN$ the image of $\tilde{u}$ sits in a tubular neighborhood of ${^*\CP}^{2L}\subset{^*\CP}^{2N}=\IP(\IF)$; we comment on the general case at the end of this proof. In particular, as in the standard finite-dimensional case, by transfer we may write $\tilde{u}$ as a pair of maps $$(\tilde{u}^L_{\perp},\tilde{u}^L): {^*\IR}\times {^*[0,1]}\to {^*\IC}^{2N-2L}\times {^*\CP}^{2L}$$  for every unlimited $L\leq N\in{^*\IN}\backslash\IN$. Using proposition \ref{characterization} it suffices to prove that for \emph{every} unlimited $L\in{^*\IN}\backslash\IN$ with $L\leq N$ we have $\tilde{u}^L_{\perp}\approx 0$, that is, $\tilde{u}^L_{\perp}(s,t)\approx 0$ for all $(s,t)\in{^*\IR}\times {^*[0,1]}$. \\

In order to prove this, observe first that by proposition \ref{approximation} we know for the non-standard extension of the Hofer norm that $$|||G^N-G^L|||\leq |||{^*G}-G^N|||+|||{^*G}-G^L||| \approx 0$$ as $L$ and $N$ are unlimited. After applying the transfer principle to the first lemma \ref{L^2-Hofer}, we get that the (non-standard extension of the) $L^2$-norm ${^*\|\del_s \tilde{u}^L_{\perp}\|_2}$ of $\del_s\tilde{u}^L_{\perp}$ must be infinitesimal as well. On the other hand, after applying the transfer principle to the second lemma \ref{sup-L^2}, we get for the (non-standard extension of the) supremum norm of $\del_s\tilde{u}^L_{\perp}$ that $${^*\|\del_s \tilde{u}^L_{\perp}\|_{\infty}}^3\,\leq\,\frac{4}{\sqrt{\pi}}\cdot {^*\|\del_s \tilde{u}^L_{\perp}\|_2}\cdot {^*\|T\tilde{u}\|_{C^1}}^2.$$ Since we already know that the (nonstandard) $C^1$-norm of $T\tilde{u}$ is indeed limited by proposition \ref{no-bubbling}, and as the product of an infinitesimal number with a limited number is still infinitesimal by proposition \ref{limited-infinitesimal}, we can actually deduce that also the (non-standard extension of the) supremum norm of $\del_s\tilde{u}^L_{\perp}$ must be infinitesimal as well, that is, $\del_s\tilde{u}^L_{\perp}(s,t)\approx 0$ for all points $(s,t)\in{^*\IR}\times {^*[0,1]}$.\\

It remains to be shown that $\del_s\tilde{u}^L_{\perp}\approx 0$ indeed implies $\tilde{u}^L_{\perp}\approx 0$. To this end observe first that, by $\CR^T_G\tilde{u}=0$, we know that $\del_s\tilde{u}^L_{\perp} \approx 0$ implies that $\del_t \tilde{u}^L_{\perp} \approx (X^{G,\IF}_t(\tilde{u}))^L_{\perp}$, where $(X^{G,\IF}_t(\tilde{u}))^L_{\perp}$ denotes the projection of the symplectic gradient $X^{G,\IF}_t(\tilde{u})$ of $G^{\IF}_t$ to ${^*\IC}^{2N-2L}$. Since we again have $(X^{G,\IF}_t(\tilde{u}))^L_{\perp}=(X^{G,N}_t(\tilde{u}))^L_{\perp}\approx (X^{G,L}_t(\tilde{u}))^L_{\perp}=0$ by proposition \ref{approximation} and proposition \ref{convergence}, it follows that $\del_t \tilde{u}^L_{\perp}\approx 0$. But this immediately implies that  $$\tilde{u}^L_{\perp}(s,t)\approx \tilde{u}^L_{\perp}(s,t+1)=\phi^0_1(\tilde{u}^L_{\perp}(s,t))$$ for all $(s,t)\in{^*\IR}\times{^*S^1}$. \\

In order to finish the proof, we need to apply the transfer principle to the third lemma \ref{nondegenerate} in order to get that $$|\tilde{u}^L_{\perp}(s,t)|\,\leq\, \delta^{-1}\cdot |\phi^0_1(\tilde{u}^L_{\perp}(s,t))-\tilde{u}^L_{\perp}(s,t)|.$$ Since $\delta^{-1}>0$ is standard and hence limited and $|\phi^0_1(\tilde{u}^L_{\perp}(s,t))-\tilde{u}^L_{\perp}(s,t)|$ is already known to be infinitesimal, we can again employ proposition \ref{limited-infinitesimal} in order to finally deduce that $\tilde{u}^L_{\perp}(s,t)\approx 0$ for all $(s,t)\in{^*\IR}\times{^*[0,1]}$. \\

Note that in the case when the image of $\tilde{u}$ is not yet known to be fully contained in a tubular neighborhood of ${^*\CP}^{2L}$, our proof can be used to show that the part contained in it must be infinitesimally close to ${^*\CP}^{2L}$. Together with the asymptotic condition and the Lipschitz continuity of $\tilde{u}$ (with limited Lipschitz constant given by its non-standard $C^1$-norm) this proves that the full image of $\tilde{u}$ has to be infinitesimally close. \end{proof}

\begin{remark} We want to compare this with Grossman's infinite-dimensional counter-example to the Hopf-Rinow theorem. Note that the latter states that every complete Riemannian manifold carries a minimizing geodesic. For his counter-example Grossman defines the sequence $(a_n)_{n\in\IN}$ by $a_0=1$ and $a_n=1+2^n$ for $n\geq 1$, considers the infinite-dimensional ellipsoid $$Q=\Big\{(x_n)_{n\in\IN}: \sum_{n=0}^{\infty} \frac{x_n^2}{a_n^2}=1\Big\}\subset\ell^2(\IR)$$ and shows that there does not exist a minimizing geodesic between the points $+e_0$ and $-e_0$ on $Q$, where $(e_n)_{n\in\IN}$ denotes the canonical complete orthonormal basis of $\ell^2(\IR)$. On the other hand, for every finite-dimensional subellipsoid $$Q^k=\Big\{(x_n)_{n=0}^k: \sum_{n=0}^k \frac{x_n^2}{a_n^2}=1\Big\}=Q\cap\IR^k$$ it is not hard to see that the corresponding minimizing geodesic $\gamma^k$ between $+e_0$ and $-e_0$ lies on the intersection of $Q^k$ with the $e_0$-$e_k$-plane. Choosing an unlimited *-natural number $N\in{^*\IN}\backslash\IN$ it follows as in proposition \ref{existence} that $Q$ is contained in $Q^N$ up to an infinitesimal error. While there exists a minimizing geodesic $\gamma^N$ on $Q^N$ between $+e_0$ and $-e_0$ by transfer, in contrast to our result this does \emph{not} provide us with a minimizing geodesic on $Q$: Since $\gamma^N$ still lies on the intersection of $Q^N$ with the $e_0$-$e_N$-plane, it follows that almost all points on $\gamma^N$ have a non-infinitesimal distance to $Q^L=Q^N\cap{^*\IR}^L$ for all $L<N$. But by proposition \ref{characterization} this means that $\gamma^N$ is \emph{not} near-standard. \end{remark}  

Since we have established that, for every $n\in\IN$, the map $\tilde{u}=\tilde{u}^{\IF}_n:{^*\IR}\times {^*[0,1]}\to\IP(\IF)$ is near-standard, we can apply the standard part map from definition \ref{near-standard} to obtain a standard map $\tilde{u}^{\IH}_n:={^{\circ}\tilde{u}}:\IR\times [0,1]\to\IP(\IH)$. Although the map $\tilde{u}^{\IF}_n$ is only differentiable in the non-standard sense of remark \ref{*-differentiable}, we will show below that the limitedness of the derivatives of $\tilde{u}$ is indeed sufficient to show that ${^{\circ}\tilde{u}}$ is smooth in the standard sense. In the last proposition we prove that, for unlimited $T$,  $\tilde{u}^{\IH}_n={^{\circ}\tilde{u}}$ is a Floer strip as claimed in the main theorem; in particular, there exists sequences $(s^{\pm}_{\alpha})$ of positive real numbers such that $\tilde{u}^{\IH}_n(s^{\pm}_{\alpha},\cdot)$ converges to fixed points $u^0_n$, $u^1_n$ of the time-one flows $\phi^0_1$, $\phi_1$, respectively.

\begin{lemma}\label{Floer-smooth} The map $\tilde{u}^{\IH}_n:={^{\circ}\tilde{u}}:\IR\times [0,1]\to\IP(\IH)$ is smooth in the standard sense, where the standard derivatives of $\tilde{u}^{\IH}_n$ and the non-standard derivatives of $\tilde{u}^{\IF}_n$ are related via $$\del_s\tilde{u}^{\IH}_n(s,t)={^{\circ}(\del_s\tilde{u}^{\IF}_n(s,t))},\,\,\del_t\tilde{u}^{\IH}_n(s,t)={^{\circ}(\del_t\tilde{u}^{\IF}_n(s,t))}$$ for all $(s,t)\in\IR\times [0,1]$.\end{lemma}

\begin{proof} For proving the differentiability of ${^{\circ}\tilde{u}}$ at some $(s,t)\in\IR\times [0,1]$, note that it suffices to prove that the map ${^{\circ}\tilde{u}_m}: B^2_r(0)\to\IH/\IC$, obtained by applying the standard part map to the map $\tilde{u}_m=\varphi_m^{-1}\circ\tilde{u}:{^*B}^2_r(0)\to\IF/{^*\IC}={^*\IC}^{2N}$ is differentiable at $z=0$, where $m\in\{0,\ldots,2N\}$ is chosen such that $\tilde{u}(s,t)\in\varphi_m({^*\IC}^{2N})$; recall from above that the radius $r>0$ can indeed chosen to be a standard positive real number. Fix a direction $\theta\in S^1$ and define the map $f=f_{\theta}:{^*[0,r)}\to\IF/{^*\IC} ={^*\IC}^{2N}$ by $f(x)=\tilde{u}_m(x\cdot e^{i\theta})$. Since $\tilde{u}_m$ is near-standard in the sense of proposition \ref{near-standard} and *-smooth with limited ${^*\|\tilde{u}_m\|_{C^{\ell}}}$ for all $\ell\in\IN$, the same holds true for $f$, and it suffices to prove that ${^{\circ}f}:[0,r)\to\IH/\IC$ is differentiable at $0$. \\

As the first step we prove that ${^{\circ}f}$ is Lipschitz continuous. After applying the transfer principle to the intermediate value theorem and using that $f$ is differentiable in the non-standard sense, note that we have for every $x<y\in{^*[0,r)}$ that $$\frac{f(y)-f(x)}{y-x}=f'(w)\,\,\textrm{for some}\,\,w\in{^*[x,y]},$$ which implies that $|f(y)-f(x)|\leq{^*\|f\|_{C^1}}\cdot |y-x|$. By applying the standard part map, it follows that ${^{\circ}f}$ is Lipschitz continuous, $$|{^{\circ}f}(y)-{^{\circ}f}(x)|\leq c_1\cdot |y-x|,$$ where the positive real number $c_1\geq 0$ is the standard part of the limited number ${^*\|f\|_{C^1}}\in{^*\IR}^+\cup\{0\}$. After showing that ${^{\circ}f}$ is continuous in the standard sense, we now prove that it is differentiable. For this we observe that, by the same arguments as used for $f$, its first derivative $f':[0,r)\to\IF/\IC$ is Lipschitz with limited Lipschitz constant given by ${^*\|f\|_{C^2}}$. For the difference quotient used to establish the Lipschitz continuity, this can be used to prove that $$\Big|\frac{f(y)-f(x)}{y-x}-f'(x)\Big|=|f'(w)-f'(x)|\leq {^*\|f\|_{C^2}}\cdot |y-x|$$ using that $w\in{^*[x,y]}$. It follows that for every standard $\eps>0$ there exists a standard $\delta=\eps/c_2>0$ with $c_2=\max\{{^{\circ}({^*\|f\|_{C^2}})},1\}$ with the property that $$|y-x|<\delta\,\,\textrm{implies}\,\,\Big|\frac{f(y)-f(x)}{y-x}-f'(x)\Big|<\eps.$$ Now using that the near-standardness of $f$ implies that the above difference quotient is near-standard with standard part given by $${^{\circ}\Big(\frac{f(y)-f(x)}{y-x}\Big)}=\frac{{^{\circ}f}(y)-{^{\circ}f}(x)}{y-x},$$ it follows that $$|y-x|<\delta\,\,\textrm{implies}\,\,\Big|\frac{{^{\circ}f}(y)-{^{\circ}f}(x)}{y-x}-f'(x)\Big|<\eps.$$ But this proves that ${^{\circ}f}$ is differentiable at $x\in[0,r)$ in the standard sense with derivative given by the standard part of $f'(x)$; in particular, we see a posteriori that $f'$ has to be near-standard itself. \\  

On the other hand, after replacing $f$ by $f'$ and employing the limitedness of ${^*\|f'\|_{C^2}}\leq{^*\|f\|_{C^3}}$, one can successively prove that ${^{\circ}f}$ is infinitely often differentiable, that is, smooth in the standard sense.  Finally, observe that the latter also proves that the standard derivatives of $\tilde{u}^{\IH}_n$ and the non-standard derivatives of $\tilde{u}^{\IF}_n$ are related via $\del_s\tilde{u}^{\IH}_n={^{\circ}(\del_s\tilde{u}^{\IF}_n)}$, $\del_t\tilde{u}^{\IH}_n={^{\circ}(\del_t\tilde{u}^{\IF}_n)}$. \end{proof}

With this we can now finish the proof of the main theorem with the following two propositions.

\begin{proposition} The map $\tilde{u}^{\IH}_n:\IR\times [0,1]\to\IP(\IH)$ is a Floer strip as in the main theorem, that is, it satisfies the perturbed Cauchy-Riemann equation $\CR\tilde{u}^{\IH}_n=\varphi(s)\cdot\nabla G^{\IH}_t(\tilde{u}^{\IH})$, the periodicity condition $\tilde{u}^{\IH}_n(\cdot,1)=\phi^0_1(\tilde{u}^{\IH}_n(\cdot,0))$, and it connects the reference fixed point $u^0_n$ of the free Schr\"odinger equation with a fixed point $u^1_n$ of the given nonlinear Schr\"odinger equation of convolution type in the sense that there exist two sequences $(s_k^{\pm})_{k\in\IN}$ of real numbers, $s_k^{\pm}\to\pm\infty$ with $\tilde{u}^{\IH}(s_k^-,\cdot)\to u^0_n$ and $\tilde{u}^{\IH}(s_k^+,\cdot)\to u^1_n$ as $k\to\infty$. \end{proposition}

\begin{proof} Starting with the periodicity condition, note that by proposition \ref{*-free NLS} we have for all $s\in\IR$ that $$\phi^0_1(\tilde{u}^{\IH}(s,0))={^{\circ}({^*\phi^0_1}(\tilde{u}^{\IF}(s,0)))}={^{\circ}\tilde{u}^{\IF}(s,1)}=\tilde{u}^{\IH}(s,1).$$ In the same spirit, note that by combining proposition \ref{Floer-smooth} with proposition \ref{approximation} we have $$\CR\tilde{u}^{\IH}={^{\circ}\big(\CR\tilde{u}^{\IF}\big)}={^{\circ}\big(\varphi_T(s)\nabla G^{\IF}(\tilde{u}^{\IF})\big)}=\varphi(s)\nabla G^{\IH}(\tilde{u}^{\IH}),$$ when we define $\varphi:\IR\to [0,1]$ by $\varphi(s)={^{\circ}\varphi_T(s)}$; note that $\varphi_T(s)=1$ for all $s\in [0,2T]$ implies that $\varphi(s)=1$ for all $s\in\IR^+_0$ when $T\in{^*\IR}^+$ is unlimited. \\

For the asymptotic condition consider first a Floer strip $\tilde{u}=\tilde{u}_n=\tilde{u}^k_{n,T}:\IR\times [0,1]\to\CP^{2k}$ as in proposition \ref{strips}. By the finiteness of energy it follows, as in the proof of \ref{finite-dim}, that for every $\alpha\in\IN$ with $\alpha\leq T$ there exist $-2\alpha\leq s_{\alpha}^-\leq -\alpha$, $\alpha\leq s_{\alpha}^+\leq 2\alpha$ with $$\int_0^1 |\del_t\tilde{u}(s_{\alpha}^{\pm},t) - \varphi_T(s_{\alpha}^{\pm}) X^{G,k}_t(\tilde{u}(s_{\alpha}^{\pm},t))|^2\;dt\,<\,\frac{\pi}{2\alpha},$$ and hence using the Cauchy-Schwarz inequality $$d(\tilde{u}(s_{\alpha}^-,0),\phi^0_1(\tilde{u}(s_{\alpha}^-,0))),\,\,d(\tilde{u}(s_{\alpha}^+,0),\phi_1(\tilde{u}(s_{\alpha}^+,0)))\,<\,\sqrt{\frac{\pi}{2\alpha}},$$ where $d$ denotes the Riemannian distance function on $\CP^{2k}\subset\IP(\IH)$. By letting $k\in\IN$ vary and applying the transfer principle to this statement  and fixing the unlimited $T\in{^*\IR}^+$ from before, it follows that there exists sequences $(s_{\alpha}^{\pm})_{\alpha\in\IN}$ of real numbers with $-2\alpha\leq s_{\alpha}^-\leq -\alpha$, $\alpha\leq s_{\alpha}^+\leq 2\alpha$ with $${^*d}(\tilde{u}^{\IF}(s_{\alpha}^-,0),{^*\phi^0_1}(\tilde{u}^{\IF}(s_{\alpha}^-,0))),\,\,{^*d}(\tilde{u}^{\IF}(s_{\alpha}^+,0),{^*\phi_1}(\tilde{u}^{\IF}(s_{\alpha}^+,0)))\,<\,\sqrt{\frac{\pi}{2\alpha}},$$ where ${^*d}$ is the *-extension of the distance $d$ on $\IP(\IH)$. After applying the standard part map, this in turn implies that $$d(\tilde{u}^{\IH}(s_{\alpha}^-,0),\phi^0_1(\tilde{u}^{\IH}(s_{\alpha}^-,0))),\,\,d(\tilde{u}^{\IH}(s_{\alpha}^+,0),\phi_1(\tilde{u}^{\IH}(s_{\alpha}^+,0)))\,<\,\sqrt{\frac{\pi}{2\alpha}}.$$ By applying the classical diagonal sequence argument and possibly passing to a subsequence, we can assume that for all $\ell\in\IN$ the sequences of projected points $(\pi_{\ell}\circ\tilde{u}^{\IH})(s_{\alpha}^{\pm},0)\in\CP^{2\ell}$ are convergent, where $\pi_{\ell}:\IP(\IH)\to\CP^{2\ell}$ denote the canonical projections onto the finite-dimensional projective spaces. \\

In order to prove that the sequences $(s_{\alpha}^{\pm})_{\alpha\in\IN}$ have the desired property, we now make use again of proposition \ref{step1}. Fix some standard $\eps>0$. Since $\sup\{d(\tilde{u}^{\IF}(s,t),{^*\CP}^{2L}):\;(s,t)\in{^*\IR}\times {^*[0,1]}\}\approx 0$ for every unlimited $L\in\IN$ and hence $\sup\{{^*d}(\tilde{u}^{\IF}(s,t),{^*\CP}^{2\ell}):\;(s,t)\in{^*\IR}\times{^*[0,1]}\} <\eps/3$ for all $\ell\geq L$, it follows from the spillover principle in proposition \ref{spillover} that there must exist a standard $L\in\IN$ with $\sup\{{^*d}(\tilde{u}^{\IF}(s,t),{^*\CP}^{2\ell}):\,(s,t)\in{^*\IR}\times{^*[0,1]}\}<\eps/3$ and hence $\sup\{d(\tilde{u}^{\IH}(s,t),\CP^{2\ell}):\,(s,t)\in\IR\times [0,1]\}<\eps/3$ for all $\ell\geq L$. On the other hand, fixing some $\ell\geq L$ we find some $n_0 \in\IN$ such that for all $\alpha,\beta\geq n_0$ we have $d((\pi_{\ell}\circ\tilde{u}^{\IH})(s_{\alpha}^{\pm},0), (\pi_{\ell}\circ\tilde{u}^{\IH})(s_{\beta}^{\pm},0))<\eps/3.$ Together with $$d(\tilde{u}^{\IH}(s_{\alpha}^{\pm},0),(\pi_{\ell}\circ\tilde{u}^{\IH})(s_{\alpha}^{\pm},0)),\,\,d(\tilde{u}^{\IH}(s_{\beta}^{\pm},0),(\pi_{\ell}\circ\tilde{u}^{\IH})(s_{\beta}^{\pm},0))\,<\,\eps/3,$$ it follows summarizing that $$d(\tilde{u}^{\IH}(s_{\alpha}^{\pm},0),\tilde{u}^{\IH}(s_{\beta}^{\pm},0)))\,<\,\eps/3+\eps/3+\eps/3=\eps,$$ proving that the sequence of points $(\tilde{u}^{\IH}(s_{\alpha}^{\pm},0))_{\alpha\in\IN}$ has the Cauchy property and hence converges by the completeness of $\IP(\IH)$. Together with $$d(\tilde{u}^{\IH}(s_{\alpha}^-,0),\phi^0_1(\tilde{u}^{\IH}(s_{\alpha}^-,0)))\,<\,\sqrt{\frac{\pi}{2\alpha}}\,\to\, 0$$ and $$d(\tilde{u}^{\IH}(s_{\alpha}^+,0),\phi_1(\tilde{u}^{\IH}(s_{\alpha}^+,0)))\,<\,\sqrt{\frac{\pi}{2\alpha}}\,\to\, 0$$ as $\alpha\to\infty$, it follows that the limit points are fixed points of $\phi^0_1$ and $\phi_1$, respectively. \end{proof}

In order to complete the proof of the main theorem it just remains to show the following 

\begin{proposition} For $m>n\geq 4$ the corresponding fixed points $u^1_m$ and $u^1_n$ are different. \end{proposition}

\begin{proof} We will prove this by applying the transfer principle to the corresponding proposition \ref{action} for finite-dimensional nonlinearities. For this we consider the *-extension $({^*s^+_{\alpha}})_{\alpha\in{^*\IN}}$ of the sequence $(s^+_{\alpha})_{\alpha\in\IN}$. Since ${^{\circ}\tilde{u}^{\IF}_m(s^+_{\alpha},0)}\to u^1_m$ as $\alpha\to\infty$, it follows that for every standard $\eps>0$ there exists some $\alpha_0\in\IN$ such that ${^*d}(\tilde{u}^{\IF}_m(s^+_{\alpha},0),u^1_m)<\eps$ for all $\alpha\geq\alpha_0$. Since every unlimited *-natural number $A\in{^*\IN}\backslash\IN$ is greater than any standard $\alpha_0$, it follows from the transfer principle that $\tilde{u}^{\IF}_m({^*s^+_A},0)\approx u^1_m$ for all unlimited *-natural number $A\in{^*\IN}\backslash\IN$. On the other hand, since for all natural numbers $\alpha$ the *-extension of the $L^2$-norm of $\del_t\tilde{u}^{\IF}_m(s^+_{\alpha}) +  X^{G,\IF}_t(\tilde{u}^{\IF}_m(s^+_{\alpha}))$ is less than $\sqrt{\pi/2\alpha}$, it follows from the spillover principle in proposition \ref{spillover} that there must an unlimited *-natural number $A\in{^*\IN}\backslash\IN$ such that the *-extension of the $L^2$-norm of $\del_t\tilde{u}^{\IF}_m({^*s^+_A}) +  X^{G,\IF}_t(\tilde{u}^{\IF}_m({^*s^+_A}))$ is less than $\sqrt{\pi/2A}$, that is, infinitesimal. Let us define $s:={^*s^+_A}$ and note that in the same way we find $s'\in{^*\IR^+}$ such that $\tilde{u}^{\IF}_n(s',0)\approx u^1_n$ and the *-extension of the $L^2$-norm is again infinitesimal. By applying the transfer principle to proposition \ref{action}, it now follows for $m>n\geq 4$ that $|{^*\mathcal{A}}(\tilde{u}^{\IF}_m(s))-{^*\mathcal{A}}(\tilde{u}^{\IF}_n(s'))|>1$, employing the *-extension ${^*\mathcal{A}}$ of the symplectic action defined in proposition \ref{action}. The proof is complete after we have shown the following\\

\noindent\emph{Claim: If $\tilde{u}^{\IF}_m(s)\approx\tilde{u}^{\IF}_n(s')$, then ${^*\mathcal{A}}(\tilde{u}^{\IF}_m(s))\approx{^*\mathcal{A}}(\tilde{u}^{\IF}_n(s'))$.}\\

So let us assume to the contrary that $\tilde{u}^{\IF}_m(s,t)\approx\tilde{u}^{\IF}_n(s',t)$ for all $t\in{^*[0,1]}$. Then it first follows as in the proof of proposition \ref{approximation} from the continuity of the gradient that $$X^{G,\IF}_t(\tilde{u}^{\IF}_m(s,t))=i\nabla G^{\IF}_t(\tilde{u}^{\IF}_m(s,t))\approx i\nabla G^{\IF}_t(\tilde{u}^{\IF}_n(s',t))= X^{G,\IF}_t(\tilde{u}^{\IF}_n(s',t)).$$ Together with the fact that the *-extension of the $L^2$-norm of $\del_t \tilde{u}^{\IF}_m(s)-X^{G,\IF}_t(\tilde{u}^{\IF}_m(s))$ and $\del_t \tilde{u}^{\IF}_n(s')-X^{G,\IF}_t(\tilde{u}^{\IF}_n(s'))$ is infinitesimal, it follows that also $${^*\|\del_t \tilde{u}^{\IF}_m(s)-\del_t \tilde{u}^{\IF}_n(s')\|_2}\approx 0.$$ In order to finish the proof of the claim, we apply the transfer principle to the following continuity result for the symplectic action in finite dimensions. To this end, assume that $\tilde{u}_m,\tilde{u}_n:\IR\times [0,1]\to\CP^{2k}$ are Floer strips in the finite-dimensional complex projective space for some $k\in\IN$ for the given Hamiltonian $G=G^k$ as in proposition \ref{strips}. Then it is an easy exercise to show that there exists a constant $c>0$ such that $|\mathcal{A}(\tilde{u}_m(s))-\mathcal{A}(\tilde{u}_n(s'))|$ is bounded by $$c\cdot \big(\sup\{d(\tilde{u}_m(s,t),\tilde{u}_n(s',t)):t\in [0,1]\}+\|\del_t \tilde{u}_m(s)-\del_t \tilde{u}_n(s')\|_2\big),$$ where the constant just depends on the $C^1$-norm of $G$ and is independent of the dimension $k$ of the target manifold. By transfer, it follows that the same inequality continues to hold for the Floer strips $\tilde{u}^{\IF}_m,\tilde{u}^{\IF}_n:{^*\IR}\times {^*[0,1]}\to\IP(\IF)={^*\CP^{2N}}$ when the symplectic action, the Riemann distance and the $L^2$-norm are replaced by their *-extensions. Since $\tilde{u}^{\IF}_m(s)\approx\tilde{u}^{\IF}_n(s')$ and ${^*\|\del_t \tilde{u}^{\IF}_m(s)-\del_t \tilde{u}^{\IF}_n(s')\|_2}\approx 0$, it hence follows that ${^*\mathcal{A}}(\tilde{u}^{\IF}_m(s))\approx{^*\mathcal{A}}(\tilde{u}^{\IF}_n(s'))$ as desired. \end{proof}

\section{Appendix: Non-standard model theory}

In this section we provide an outline of all the background and relevant definitions and statements about nonstandard analysis that the reader needs to know in order to follow the rest of the paper. Here we describe the original model-theoretic approach of Robinson (\cite{R}), outlined in the excellent expositions \cite{L1}, \cite{L2} as well as in \cite{Ke}, to which we refer and which shall also be consulted for more details and background. \\

Believing in the axiom of choice it is well-known, see e.g. (\cite{L2}, theorem 2.9.10), that there exist non-standard models of mathematics in which, on one side, one can do the same mathematics as before (transfer principle) but, on the other side, all sets behave like compact sets (saturation principle). The idea is to successively introduce new ideal objects such as infinitely small and large numbers. The proof of existence of the resulting polysatured model is then performed in complete analogy to the proof of the statement that every field has an algebraic closure, by employing the axiom of choice. \\

A model of mathematics $V$ is a family of sets which is rich enough in order to do all the mathematics that one has in mind. Since for existence proof of non-standard models it is crucial that $V$ is still a set in the sense of set theory, there are (abstract) sets which are not in $V$. Below we show how to define such a set $V$ which contains all mathematical entities that we need for our proof. For most of the upcoming definitions and theorems on the general background on model theory we refer the reader to \cite{L2} as well as \cite{Ke}. The first definition is taken from the appendix in (\cite{L2}, section 2.9). 

\begin{definition} A sequence $V=(V_n)_{n\in\IN}$ of ordered sets $V_n$, $n\in\IN$ is called a \emph{model} if the elements in $V_n$ are sets formed from the elements in $V_0,\ldots,V_{n-1}$, i.e.,  $V_n\subset\PP(V_0\cup\ldots\cup V_{n-1})$ and $V_0$, called the set of urelements, does not contain elements from higher sets, i.e., $V_0\cap\bigcup_{n\geq 1} V_n=\emptyset$. \end{definition}

By choosing the model $V=(V_n)_{n\in\IN}$ large enough, one can ensure that the model contains all mathematical entities that one wants to work with. Apart from assuming that every subset formed from elements in $V_0,\ldots,V_{n-1}$ is in $V_n$, below we show explicitly that for our proof it turns out to be sufficient to take the real numbers as urelements, i.e., $V_0=\IR$.

\begin{definition} We call $V=(V_n)_{n\in\IN}$ the \emph{standard model} if the urelements are the real numbers, $V_0=\IR$, and the model is full in the sense that $V_n=\PP(V_0\cup\ldots\cup V_{n-1})$. \end{definition}

In what follows, let $V=(V_n)_{n\in\IN}$ denote the standard model. As discussed in (\cite{L2}, 2.9), it follows that $$V(\IR)=\bigcup_{n=0}^{\infty} V_n(\IR)\;\;\textrm{with}\;\; V_n(\IR)=\IR\cup V_n\;\;\textrm{for all}\;\;n\in\IN$$ is the superstructure over the real numbers in the sense of (\cite{L2}, definition 2.1.1) and (\cite{Ke}, definition 15.4). Note that, for $n>1$, we have for every full model that $V_{n-1}\subset V_n$. \\

\begin{proposition} The standard model $V=(V_n)_{n\in\IN}$ contains (isomorphic copies of) all mathematical entities that appear in our proof.  
\end{proposition}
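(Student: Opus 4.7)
The plan is to verify that each specific mathematical object appearing in the statements and arguments of the earlier sections lies at a finite stage $V_n$ of the standard superstructure over $\IR$. Since $V_0=\IR$ and $V_n=\PP(V_0\cup\ldots\cup V_{n-1})$, the superstructure is automatically closed under arbitrary subset formation, and, using Kuratowski's encoding of ordered pairs $(a,b)=\{\{a\},\{a,b\}\}$, it is also closed under finite Cartesian products and hence under forming function spaces $B^A\subset\PP(A\times B)$ whenever $A$ and $B$ already lie in $V$. Thus the proof reduces to exhibiting, for each object used in the paper, a finite bound on the number of such constructions needed.

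First I would note that, starting from $V_0=\IR$, one obtains $\IC\cong\IR\times\IR$, and that $\IN$, $\IZ$, $\IQ$, $S^1=\IR/2\pi\IZ$, as well as any countable or continuum-sized set encoded via the reals, appear at the first few levels of the hierarchy. Applying closure under subset and function space finitely many times then places the Hilbert space $\IH=L^2(S^1,\IC)\subset\IC^{S^1}$, the unit sphere $\IS(\IH)$ and the projective Hilbert space $\IP(\IH)=\IS(\IH)/U(1)$ (whose elements are equivalence classes, themselves subsets of $\IS(\IH)$), the symplectic form $\omega:\IH\times\IH\to\IR$, the Fubini-Study structure, the smooth one-periodic Hamiltonians $H_t$, their symplectic gradients $X^H_t$, and the Hamiltonian flow $\phi_t$ together with its time-one map $\phi_1$ all in $V_n$ for some fixed finite $n$.

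Next I would verify the same for the auxiliary objects invoked throughout the proof: the smoothing kernel $\psi$ and its Fourier transform $\hat\psi:\IZ\to\IC$, the approximating sequences of kernels $\psi^k$ and Hamiltonians $G^k_t$, the finite-dimensional projective spaces $\CP^{2k}$ together with their almost complex structures and cut-off functions $\varphi_T$, and, most importantly, the spaces of smooth maps $\tilde{u}:\IR\times [0,1]\to\CP^{2k}$ and the moduli spaces of Floer strips $(\tilde{u},T)$ used in Proposition \ref{strips}. Each such object is obtained from $\IR$ by a bounded number of applications of subset formation, Cartesian product and function space construction, hence lies in $V_m$ for some finite $m$.

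The only conceptual obstacle is to make sure that collecting all these objects into a single model still yields a genuine set rather than a proper class; this is ensured precisely because the construction is stratified by countably many levels, so that $V(\IR)=\bigcup_{n\in\IN}V_n(\IR)$ remains a set. Since each individual object needed in the paper has been located at some finite level, and since there are only countably many types of construction involved, the standard model contains isomorphic copies of all mathematical entities required to formulate and prove the main theorem, as claimed.
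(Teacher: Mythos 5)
Your proposal is correct and follows essentially the same approach as the paper: both reduce the claim to the observation that the superstructure over $\IR$ is closed under Kuratowski pairing and function-space formation, and then trace through the hierarchy to see that each object used (Hilbert space, projective space, symplectic form, Hamiltonians, flows, smoothing kernels, almost complex structures, moduli spaces of Floer strips) appears at some finite level $V_n$. The paper pins down the exact level indices (e.g.\ functions $f\colon a\to b$ with $a,b\in V_n$ lie in $V_{n+2}$, symplectic forms in $V_{11}$, families of almost complex structures in $V_{12}$) and identifies $\IH$ with $\ell^2(\IC)$ rather than a subset of $\IC^{S^1}$, but these are only bookkeeping choices and neither change affects the substance of the argument.
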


\begin{proof} Since in analysis one considers sets of functions which themselves can be viewed as sets built from the real numbers, the superstructure over the real numbers contains all mathematical entities that one needs to do analysis, see (\cite{Ke}, section 15B). In particular, if $a$ and $b$ are sets in $V_n$, then every function $f:a\to b$ is an element of $V_{n+2}$ and every set of functions $f:a\to b$ is an element of $V_{n+3}$. For this it suffices to observe that, in set theory, a function $f:a\to b$ is identified with the subset $\{(x,f(x)): x\in a\}$ of $a\times b$ and for each $x\in a$, $y\in b$ the tuple $(x,y)$ is defined as the set $\{x,\{x,y\}\}$. \end{proof} 

In order to show that Floer's existence result of symplectic fixed points and pseudo-holomorphic strips indeed continues to hold in infinite dimensions, we will use that, by abstract model theory, his statement also holds in the non-standard model which we are going to discuss below. To make the underlying transfer principle precise, we quickly recall all the necessary background from first-order predicate logic that is needed. \\

The idea is that, just like all mathematical entities that we need are contained in the standard model $V=(V_n)_{n\in\IN}$, all statements that we will transfer can be formalized in first-order logic, that is, they are sentences in the language $\LL_V$ for our standard model $V$. In the same way as the details in the precise definition of models are not ultimatively important in order to understand the strategy of our proof, we continue to recall all needed foundations from logic for the sake of completeness of the exposition. For the following definitions we continue to refer to the appendix in (\cite{L2}, section 2.9) as well as (\cite{Ke}, section 15B).

\begin{definition} The \emph{alphabet} of the \emph{language} $\LL_V$ of the model $V=(V_n)_{n\in\IN}$ consists of the logical symbols $\vee$, $\neg$, $\exists$, $=$, $\in$, a countable number of variables, the elements in $V_{<\infty}:=\bigcup_{n\in\IN} V_n$ as parameters, and auxiliary symbols like parentheses. 
\end{definition}

\begin{definition} A \emph{sentence} in the \emph{language} $\LL_V$ of the model $V=(V_n)_{n\in\IN}$ is build inductively from the following rules: \begin{itemize}
\item[i)] If $a,b\in V_{<\infty}$, then $a\in b$ and $a=b$ are sentences in $\LL_V$.
\item[ii)] If $A$ and $B$ are sentences in $\LL_V$, then $A\vee B$ and $\neg A$ are sentences in $\LL_V$.
\item[iii)] Let $A$ be a sentence in $\LL_V$ and $a,b\in V_{<\infty}$ be parameters in $\LL_V$. If $x$ is a variable not occurring in $A$, then ${\exists x\in a}\,\, A_b(x)$ is a sentence in $\LL_V$, where $A_b(x)$ is obtained from $A$ by replacing each occurrence of the parameter $b$ in $A$ by the variable $x$. 
\end{itemize}
Every $A(x)=A_b(x)$ as in part iii) with a free variable $x$ is called a \emph{formula} in $\LL_V$. Furthermore, for every parameter $a\in V_{<\infty}$, by $A(x)(a)$ we denote the new sentence in $\LL_V$ obtained by replacing the variable $x$ by the parameter $a$. 
\end{definition}

Whether a sentence $A$ holds true in the model $V$, written $V\models A$, is decided using the usual interpretation for sentences in set theory, see (\cite{L2}, 2.9), (\cite{Ke}, 15B). \\

Using the axiom of choice one can prove that there exists a so-called \emph{non-standard model} in which the same mathematics hold true but in which every set from $V$ can be viewed as a compact set. More precisely, after reformulating (\cite{L2}, theorem 2.9.10), we have the following 

\begin{theorem}\label{non-standard model}
Given the standard model $V=(V_n)_{n\in\IN}$ there exists a corresponding non-standard model $W=(W_n)_{n\in\IN}$, together with an embedding $*: V_{<\infty}\rightarrow W_{<\infty}$ respecting the filtration, i.e. $*_n: V_n\rightarrow W_n$, satisfying the following two important principles. \\
\begin{itemize}
\item\emph{Transfer principle:} If a sentence $A$ holds in the language $\LL_V$ of the model $V$, $V\models A$, then the corresponding sentence ${^*}A$, obtained by replacing the parameters from $V$ by their images in $W$ under $*$, holds in the language $\LL_W$ of the model $W$, $W\models {^*}A$. \\
\item\emph{Saturation principle:} If $(a_i)_{i\in I}$ is a collection of sets in $W$, indexed by a set $I$ in $V$, and satisfying $a_{i_1}\cap\ldots\cap a_{i_n}\neq\emptyset$ for all $i_1,\cdots,i_n\in I$, $n\in\IN$ \emph{(finite intersection property)}, then also the common intersection of all $a_i$, $i\in I$ is non-empty, $\bigcap_{i\in I} a_i\neq\emptyset$. \\
\end{itemize}
\end{theorem}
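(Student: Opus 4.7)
The plan is to construct $W$ as a bounded ultrapower of $V$ with respect to a sufficiently saturated ultrafilter. Concretely, I would first fix an index set $I$ of cardinality strictly greater than the cardinality $\kappa:=|V_{<\infty}|$ (for instance $I=\kappa^+$ with its discrete structure, or equivalently the set of finite subsets of $V_{<\infty}$ ordered by inclusion, which is convenient for constructing concurrent relations). By a theorem of Kunen, on any infinite set $I$ of cardinality $\kappa^+$ there exists a countably incomplete $\kappa^+$-good ultrafilter $\mathcal{U}$; the existence of $\mathcal{U}$ is the only place where the axiom of choice is genuinely used, and it is exactly the ingredient needed for the saturation principle with index sets of cardinality $\leq\kappa$.

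Given $\mathcal{U}$, I would define $W_{<\infty}$ level-by-level as a bounded ultrapower. Let $\widehat{V}_n=V_0\cup\dots\cup V_n$, and define $W_n$ to be the set of equivalence classes $[f]_{\mathcal{U}}$ of functions $f:I\to\widehat{V}_n$ modulo $\mathcal{U}$, where $f\sim_{\mathcal{U}} g$ iff $\{i\in I: f(i)=g(i)\}\in\mathcal{U}$. The membership relation on $W_{<\infty}$ is defined by $[f]_{\mathcal{U}}\widetilde\in[g]_{\mathcal{U}}$ iff $\{i: f(i)\in g(i)\}\in\mathcal{U}$, and after performing a Mostowski-style collapse one obtains a genuine set-theoretic model with filtration $W=(W_n)$ in which $\widetilde\in$ really is $\in$. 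The transfer map is $\ast a:=[i\mapsto a]_{\mathcal{U}}$, i.e.\ the equivalence class of the constant function; this clearly sends $V_n$ into $W_n$.

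The transfer principle is then exactly \emph{\L o\'s's theorem} for bounded quantifier formulas: by induction on the construction of sentences in $\LL_V$ one proves that for every formula $A(x_1,\dots,x_k)$ and functions $f_1,\dots,f_k$, $W\models A([f_1],\dots,[f_k])$ iff $\{i\in I: V\models A(f_1(i),\dots,f_k(i))\}\in\mathcal{U}$. The atomic cases $a\in b$ and $a=b$ are immediate from the definitions; the propositional connectives $\vee,\neg$ use the fact that $\mathcal{U}$ is an ultrafilter (so exactly one of a set and its complement is in $\mathcal{U}$); the bounded existential step $\exists x\in a$ uses the axiom of choice on $I$ to extract a witness function. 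Applied to constant functions, this immediately gives $V\models A \Leftrightarrow W\models {^*}A$.

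The saturation principle is the step I expect to be the main obstacle, because it requires genuine strength of the ultrafilter, not just its being an ultrafilter. Given a family $(a_i)_{i\in I_0}$ of sets in $W$ indexed by some $I_0\in V$ with the finite intersection property, one must produce a common element. Since $I_0\in V$ we have $|I_0|\leq\kappa$, so the $\kappa^+$-goodness of $\mathcal{U}$ applies: one associates to each $i\in I_0$ the $\mathcal{U}$-large set $X_i\subset I$ witnessing that $a_i$ is non-empty, then uses goodness to refine the assignment $i\mapsto X_i$ (or rather the associated monotone map from finite subsets of $I_0$ into $\mathcal{U}$) to a multiplicative one, from which a single function $f:I\to V_{<\infty}$ can be extracted whose equivalence class lies in every $a_i$ simultaneously. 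The combinatorics of goodness is exactly designed to patch countably many, and in fact $\leq\kappa$-many, finite-intersection witnesses into one. Once saturation is in place, the embedding $\ast:V_{<\infty}\to W_{<\infty}$ is automatically injective (apply transfer to $a\neq b$) and respects the filtration by construction, completing the proof.
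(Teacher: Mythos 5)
Your proposal is correct, and it is a genuine proof of the theorem, but it proceeds by a completely different route than the paper. The paper does not construct anything: its proof is a translation exercise. It cites Theorem~2.9.10 of \cite{L2} (existence of a polysaturated monomorphism of superstructures $V(\IR)\to V({^*}\IR)$), points out that ``monomorphism'' in the sense of \cite{L2} is by definition equivalent to the transfer principle, notes that the cardinal $\kappa^+$ in \cite{L2} is large enough to make every index set $I\in V$ $\kappa^+$-small so that polysaturation gives the saturation principle in the stated form, and finally defines $W_n:={^*}V_n$ so that every set in $W$ is internal. The entire content of the argument is matching notation and conventions to the source; the actual construction is delegated.

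You, by contrast, rebuild the result from its foundations: fix a countably incomplete $\kappa^+$-good ultrafilter $\mathcal{U}$ (Keisler under GCH, Kunen in general), form the bounded ultrapower $W_n=\prod_{\mathcal{U}}\widehat V_n$ and Mostowski-collapse it (this works since membership in the bounded ultrapower strictly decreases level and is hence well-founded), take $\ast$ to be the diagonal embedding, get transfer from \L o\'s's theorem for bounded-quantifier formulas, and get saturation from the combination of countable incompleteness and goodness via multiplicative refinements of antitone maps on finite subsets of $I$. This is the standard Keisler--Kunen argument. The payoff of your version is that it is self-contained, makes explicit the single place where the axiom of choice is genuinely used (existence of the good ultrafilter), and exhibits a concrete model rather than an abstract citation. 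The payoff of the paper's version is brevity and reliance on a textbook reference that also supplies all the follow-up lemmas (\ref{infinitesimal}, \ref{convergence}, \ref{spillover}, \ref{internal-definition}) in compatible notation. Both are legitimate; if you were writing a paper that intends only to use nonstandard analysis as a tool, the citation route is the natural choice, whereas your route is the one you would take if teaching the material or if no convenient reference existed.

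One small technical caution in your outline: when you say ``after performing a Mostowski-style collapse one obtains a genuine set-theoretic model,'' you should make sure not to collapse the urelement level $W_0$ (which already sits in $^*\IR$), since the elements of $W_0$ are not sets and the collapse should fix them pointwise. This is the standard bounded-ultrapower bookkeeping, but it is the spot where a careless write-up would conflate the internal sets with the external subsets they represent.
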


\begin{proof} Since in the references the theorem is not precisely stated in the above form, let us quickly describe how it can be deduced from \cite{L2}. In (\cite{L2}, theorem 2.9.10) it is claimed that there exists a so-called monomorphism from the superstructure $V(\IR)$ over $\IR$ into the superstructure $V({^*}\IR)$ over the set ${^*}\IR$ of non-standard real numbers. The latter are defined explicitly as equivalence classes of sequences of real numbers using the axiom of choice in (\cite{L2}, definition 1.2.3). Note that by (\cite{L2}, definition 2.4.3 and remark 2.4.4) the property of the map $*: V(\IR)\to V({^*}\IR)$ being a monomorphism is equivalent to the transfer principle, in particular, the latter indeed implies that $*$ respects the filtration. On the other hand, the fact that the formulation of the saturation principle given here is equivalent to the definition in (\cite{L2}, definition 2.9.1) is proven in (\cite{L2}, theorem 2.9.4), noticing that, by the definition of the cardinal number $\kappa^+$ appearing in (\cite{L2}, theorem 2.9.10), every set in $V$ is $\kappa^+$-small. Since the saturation property is only assumed when all sets $a_i$, $i\in V$ are \emph{internal} in the sense of (\cite{L2}, definition 2.8.1), that is, when they are elements in the $*$-image ${^*}V_n(\IR)\subset V_n({^*}\IR)$ of the set $V_n(\IR)\in V_{n+1}(\IR)$, we follow the strategy in the appendix of (\cite{L2}, section 2.9) and define the non-standard model $W=(W_n)_{n\in\IN}$ by setting $W_n:={^*}V_n={^*}V_n(\IR)$ for all $n\in\IN$. In particular, every set in the non-standard model $W=(W_n)_{n\in\IN}$ is internal.      
\end{proof}

In what follows we follow the usual conventions and write ${^*}a:=*(a)$ for every set $a\in V_{<\infty}\backslash V_0$ and identify $a:=*(a)$ for every urelement $a\in V_0=\IR$. \\
 
\begin{definition} A set $a$ is called
\begin{itemize}
\item[i)] \emph{internal} if $a\in W_{<\infty}$,
\item[ii)] \emph{standard} if $a={^*}b:=*(b)\in W_{<\infty}$ for some $b\in V_{<\infty}$.
\item[iii)]\emph{external} if $a$ is not internal.
\end{itemize} 
\end{definition}

We start with some immediate consequences of the transfer principle, see (\cite{L2}, proposition 2.4.6).

\begin{proposition} Let $a,b$ be sets in $V_{<\infty}$. Then we have
\begin{itemize}
\item[i)] $a=b$ if and only if ${^*}a={^*}b$,
\item[ii)] $a\in b$ if and only if ${^*}a\in {^*}b$,
\item[iii)] $a\subset b$ if and only if ${^*}a\subset {^*}b$,
\item[iv)] $f:a\to b$ if and only if ${^*}f: {^*}a\to {^*}b$.
\end{itemize}
\end{proposition}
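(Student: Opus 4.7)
The plan is to deduce each of the four equivalences as a direct application of the transfer principle, noting that in each case both the predicate and its negation can be written as sentences (or conjunctions thereof) in the language $\LL_V$. The ``only if'' direction will be the transfer of the statement itself, while the ``if'' direction will be the transfer of its negation.

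For (i) and (ii), the expressions $a=b$ and $a\in b$ are themselves atomic sentences in $\LL_V$ whenever $a,b\in V_{<\infty}$, and $a\neq b$ and $a\not\in b$ are just their negations. Thus if $a=b$ holds in $V$, transfer gives ${^*}a={^*}b$ in $W$; conversely, if $a\neq b$ in $V$, transferring $\neg(a=b)$ yields ${^*}a\neq {^*}b$, so that the hypothesis ${^*}a={^*}b$ forces $a=b$. The argument for (ii) is identical, with $\in$ in place of $=$.

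For (iii), I would express $a\subset b$ as the bounded-quantifier sentence $\forall x\in a\,(x\in b)$, and its negation $a\not\subset b$ as $\exists x\in a\,\neg(x\in b)$. Both fit the sentence-formation rules of $\LL_V$ spelled out in the appendix, so transfer yields $\forall x\in {^*}a\,(x\in {^*}b)$ and $\exists x\in {^*}a\,\neg(x\in {^*}b)$, respectively. These are precisely ${^*}a\subset {^*}b$ and ${^*}a\not\subset {^*}b$, giving both implications.

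For (iv), the predicate ``$f:a\to b$'' unpacks as the conjunction of $f\subset a\times b$ with the sentence ``for every $x\in a$ there exists a unique $y\in b$ with $(x,y)\in f$''. Both conjuncts, together with their negations, are expressible in $\LL_V$, so transfer handles both directions exactly as in (iii). The main technical point, and the only real obstacle, is to verify along the way that ${^*}(a\times b)={^*}a\times {^*}b$, and that the set-theoretic encoding of ordered pairs $(x,y)=\{x,\{x,y\}\}$ commutes with $\ast$; each of these commutations is itself obtained by transferring the appropriate defining sentence. Once these set-level commutations are in hand, the four equivalences follow from the transfer principle in essentially one line each.
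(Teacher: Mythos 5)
Your proof is correct and matches the intended argument: the paper itself gives no proof, simply citing (\cite{L2}, proposition 2.4.6), and the route you take — applying the transfer principle to each statement and, for the converse direction, to its negation, reducing $\subset$ and ``$f:a\to b$'' to bounded-quantifier sentences in $\LL_V$ — is exactly the standard one behind that citation. The only caveat worth recording is that you correctly flag the genuine technical content of (iv), namely that ${^*}(a\times b)={^*}a\times{^*}b$ and that the Kuratowski pair encoding commutes with $\ast$; these are themselves obtained by transfer and are the non-trivial step that the other three parts do not require.
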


These in turn lead to the following 
\begin{corollary}\label{extension} It follows  
\begin{itemize}
\item[i)] $*: V_{<\infty}\to W_{<\infty}$ is an embedding.
\item[ii)] For every set $b\in V_{<\infty}$ we have that  ${^*}[b]:=\{{^*}a: a\in b\}\subset {^*}b$. 
\item[iii)] For every function $f:a\to b$ we have that ${^*}f: {^*}a\to {^*}b$ is an extension of $f$ in the sense that for all $c\in a$ we have ${^*}(f(c))=({^*}f)({^*}c)\in {^*}b$.
\end{itemize}
\end{corollary}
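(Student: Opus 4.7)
The plan is to derive each of the three bullets as a direct consequence of the preceding proposition, which already packages the transfer principle into a list of equivalences about $=$, $\in$, $\subset$ and functions. Essentially no independent work is required; the corollary is just a bookkeeping repackaging of those equivalences into the more convenient statements about embeddings and extensions.

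For part (i), I would argue that ``embedding'' here just means injectivity of the set-level map $\ast$ on $V_{<\infty}$, and this is immediate from part (i) of the preceding proposition: if ${^*}a = {^*}b$ then the equivalence $a=b \Leftrightarrow {^*}a = {^*}b$ forces $a=b$. For part (ii), I would fix $b\in V_{<\infty}$ and an arbitrary element of ${^*}[b]$, which by definition is of the form ${^*}a$ for some $a\in b$. The relation $a\in b$ transfers by part (ii) of the proposition to ${^*}a\in{^*}b$, showing ${^*}[b]\subset{^*}b$; note that this inclusion is typically proper, which is exactly the source of the non-standard extensions already used elsewhere in the paper.

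For part (iii), I would use the set-theoretic encoding of functions as subsets of the Cartesian product already alluded to earlier in the excerpt: a function $f:a\to b$ is the set of pairs $\{(c,f(c)): c\in a\}\subset a\times b$. Part (iv) of the proposition first gives ${^*}f:{^*}a\to{^*}b$. Then for any $c\in a$, setting $d=f(c)\in b$, the statement $(c,d)\in f$ is a sentence in $\LL_V$ that holds in $V$; by transfer, $({^*}c,{^*}d)\in{^*}f$ holds in $W$, which is the defining relation that $({^*}f)({^*}c)={^*}d={^*}(f(c))$. Finally ${^*}(f(c))\in{^*}b$ by part (ii) of the proposition applied to $f(c)\in b$.

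The only potential subtlety is that one must check that the encoded tuple $(c,d)=\{c,\{c,d\}\}$ transfers correctly, i.e., that ${^*}(c,d)=({^*}c,{^*}d)$; this follows by applying the equivalences $a=b\Leftrightarrow{^*}a={^*}b$ and $a\in b\Leftrightarrow{^*}a\in{^*}b$ coordinatewise to the pairing encoding, and is implicit in the convention that $\ast$ respects all first-order set-theoretic constructions. Once this is noted, the corollary is a one-line deduction from the proposition, which is why it is stated as a corollary rather than a theorem. No substantive obstacle arises.
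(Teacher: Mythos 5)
Your proof is correct and follows the paper's own (implicit) route: the corollary is stated without proof as an immediate consequence of the preceding proposition, and each of your three derivations from it is the standard one. The one point you flag --- that ${^*}(c,d)=({^*}c,{^*}d)$ --- in fact requires slightly more than the two bare equivalences from the proposition (one must transfer the first-order sentence characterizing $\{c,\{c,d\}\}$, not merely apply $=$/$\in$ preservation coordinatewise), but this is a routine use of transfer that the paper itself records a few lines later as $^*\{a_1,\ldots,a_n\}=\{^*a_1,\ldots,^*a_n\}$, so no genuine gap remains.
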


\noindent\textbf{Examples:} 
\begin{itemize}
\item[i)] Since $+$ is a function from $\IR\times\IR$ to $\IR$, it follows that ${^*}+$ is a function from ${^*}\IR\times {^*}\IR$ to ${^*}\IR$ with ${^*}r {^*}+ {^*}s = {^*}(r+s)$ for all $r,s\in\IR$. 
\item[ii)] Since the symplectic form $\omega$ on $\IH$ is a map from $\IH\times\IH$ to $\IR$, its *-image $^*\omega$ is a map from ${^*\IH}\times{^*\IH}$ to $^*\IR$ which agrees with $\omega$ on $\IH\times\IH\subset{^*\IH}\times{^*\IH}$. Analogous statements hold true for the inner product $\<\cdot,\cdot\>$  and the complex structure $J_0$ on $\IH$. 
\item[iii)] Since, for all $n\in\IN$ and all $k\in\IN$, we know that $\sum_{i=1}^k$ is a function from $(\IR^n)^k$ to $\IR^n$, it follows that, now even for all $n\in {^*}\IN$ and all $k\in{^*\IN}$, ${^*}\sum_{i=1}^k$ is a function from $({^*}\IR^n)^k$ to ${^*}\IR^n$ with ${^*}\sum_{i=1}^k {^*}r_i={^*}\sum_{i=1}^{k-1} r_i + r_k$ for all $k\in{^*\IN}$. 
\item[iv)] Since every sequence $s=(s_n)_{n\in\IN}$ of real numbers is a function from $\IN$ to $\IR$, it follows that its $*$-image ${^*}s$ is a function from ${^*}\IN$ to ${^*}\IR$ with ${^*}s_n= {^*}(s_n)$ for all $n\in\IN$. \\ 
\end{itemize}

We make the following \\

\noindent\textbf{Convention:} \emph{If no confusion is likely to arise, we make the convention to identify each standard set $b\in V_{<\infty}$ with ${^*}[b]\subset {^*}b\in W_{<\infty}$. In particular, we have $\IR\subset {^*}\IR$ and $\IN\subset {^*}\IN$.} \\

The saturation principle implies that the non-standard model $W=(W_n)_{n\in\IN}$ is (much) larger than the standard model $V=(V_n)_{n\in\IN}$. For the next statement we refer to (\cite{L2}, proposition 2.4.6) and (\cite{L2}, proposition 2.9.7).

\begin{proposition} We have the following dichotomy:
\begin{itemize}
\item[i)] If $b\in V_{<\infty}$ has finitely many elements, then its $*$-image ${^*}b\in W_{<\infty}$ consists of the $*$-images of its elements, $$^*\{a_1,\ldots,a_n\}=\{^*a_1,\ldots,^*a_n\}.$$
\item[ii)] If $b\in V_{<\infty}$ has infinitely many elements, then its $*$-image ${^*}b\in W_{<\infty}$ contains $b$ as a proper subset, $${^*}[b]=\{{^*}a: a\in b\}\subsetneq {^*}b.$$ 
\end{itemize}
In particular, it follows from ii) that $*: V_{<\infty}\to W_{<\infty}$ is a proper embedding.
\end{proposition}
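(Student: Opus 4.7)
The plan is to treat the two parts separately: the first by direct transfer, the second by an application of the saturation principle to a carefully chosen family of internal sets. Finally the proper-embedding clause will be a one-line corollary of $ii)$.

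For $i)$, the idea is to formalize the finiteness of $b=\{a_1,\ldots,a_n\}$ as a single sentence in the language $\LL_V$. Concretely, first I would note that the statement $\forall x\,(x\in b\Leftrightarrow x=a_1\vee\ldots\vee x=a_n)$ is a sentence in $\LL_V$ (its length depends on $n$ but $n$ is a fixed standard integer), and it clearly holds in $V$. Applying the transfer principle, I replace every parameter by its $\ast$-image and deduce that $\forall x\,(x\in{^*}b\Leftrightarrow x={^*}a_1\vee\ldots\vee x={^*}a_n)$ holds in $W$. This gives exactly the equality ${^*}\{a_1,\ldots,a_n\}=\{{^*}a_1,\ldots,{^*}a_n\}$.

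For $ii)$, which is the main point, I would use the saturation principle. For each $a\in b$ define the internal set
\[
A_a\;:=\;\{x\in{^*}b:\;x\neq{^*}a\}.
\]
Each $A_a$ belongs to $W$ because it is definable from the internal parameters ${^*}b$ and ${^*}a$ using a single formula of $\LL_W$, and the index set $b$ is itself a set in $V$, so the hypotheses of saturation are met. I would then verify the finite intersection property: given $a_1,\ldots,a_n\in b$, since $b$ is infinite there exists $a_{n+1}\in b\setminus\{a_1,\ldots,a_n\}$, and by part $i)$ applied to the two-element set $\{a_{n+1},a_i\}$ we have ${^*}a_{n+1}\neq{^*}a_i$ for every $i\leq n$; hence ${^*}a_{n+1}\in A_{a_1}\cap\ldots\cap A_{a_n}$. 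Saturation then yields an element $x\in\bigcap_{a\in b}A_a$, that is, an element of ${^*}b$ distinct from every ${^*}a$ with $a\in b$. Such an $x$ lies in ${^*}b\setminus{^*}[b]$, proving the strict inclusion ${^*}[b]\subsetneq{^*}b$.

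The final sentence of the proposition, that $\ast:V_{<\infty}\to W_{<\infty}$ is a proper embedding, follows at once from $ii)$ applied for instance to $b=\IN$: the $\ast$-image ${^*}\IN$ contains elements not of the form ${^*}n$ for any $n\in\IN$, and these witness that $\ast$ is not surjective onto ${^*}\IN\in W_{<\infty}$. The only delicate step in the whole argument is checking that the family $(A_a)_{a\in b}$ is a legitimate input to saturation, i.e. that each $A_a$ is internal; this is where one uses the fact that ${^*}b$ and each ${^*}a$ are standard elements of $W$, so $A_a$ is obtained from an $\LL_W$-formula with internal parameters and is therefore itself internal.
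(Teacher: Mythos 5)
Your proof is correct and follows essentially the same route as the paper: part i) is handled by encoding finiteness as a single $\LL_V$-sentence and transferring, and part ii) uses saturation applied to the family $A_a={^*}b\setminus\{{^*}a\}$ indexed by $a\in b$, which is exactly the paper's family $a_i={^*}b\setminus\{i\}$. Your extra remarks (invoking the internal definition principle to justify that each $A_a$ is internal, and using i) to verify the finite intersection property) are sound elaborations of what the paper leaves implicit.
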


\begin{proof}
While the part i) follows from the transfer principle after observing that the equality $b=\{a_1,\ldots,a_n\}$ can be incoded into the sentence $a\in b \Leftrightarrow a=a_1\vee\ldots\vee a=a_n$ in $\LL_V$, for part ii) consider the collection of sets $(a_i)_{i\in b}$ given by $a_i:={^*}b\backslash\{i\}$ for $i\in b$. While it easy to see that they have the finite intersection property, $a_{i_1}\cap\ldots\cap a_{i_n}\neq\emptyset$ for all $i_1,\cdots,i_n\in b$, $n\in\IN$, every element in $\bigcap_{i\in b} a_i\neq\emptyset$ is an element of ${^*}b\backslash b$. Note that, while in part i) the finite intersection property fails, in part ii) the transfer principle cannot be applied as the corresponding sentence would have infinite length, which is forbidden.   
\end{proof}

In particular, one can show that ${^*}\IR$, the set of *-real (or hyperreal or non-standard real) numbers, contains infinitesimals as well as numbers which are greater than any real number. 

\begin{proposition}\label{infinitesimal} The saturation principle implies the existence of the following ideal objects. 
\begin{itemize}
\item[i)] There exist $r\in{^*}\IR\backslash\{0\}$ such that $|r|<1/n$ for every standard natural number $n\in\IN$. Any such $r\in{^*}\IR$ (including $r=0$) is called \emph{infinitesimal} and we write $r\approx 0$.
\item[ii)] There exist $r\in{^*}\IR$ such that $|r|>n$ for every standard natural number $n\in\IN$. Any such $r\in{^*}\IR$ is called \emph{unlimited}. Any $r\in{^*}\IR$ which is not unlimited is called \emph{limited}. 
\item[iii)] A number $r\in{^*}\IR$ is limited if and only if it is \emph{near-standard} in the sense that there exists a standard real number $s\in\IR$ with $r-s\approx 0$. For every near-standard $r\in{^*}\IR$ we call ${^{\circ}}r:=s\in\IR$ the \emph{standard part} of $r$. 
\item[iv)] Every $n\in{^*\IN}\backslash\IN$ is unlimited. 
\end{itemize}
\end{proposition}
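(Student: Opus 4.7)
The plan is to treat the four items in order, with (i) and (ii) being essentially direct applications of saturation (or of transfer from an elementary fact about $\IR$), (iii) being the main content, and (iv) a short consequence of transfer applied to a finite standard set.

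For (i), I would apply the saturation principle to the family $(A_n)_{n\in\IN}$ with $A_n := \{r \in {^*}\IR : 0 < r < 1/n\}$. Each $A_n$ is internal, being cut out of the internal set ${^*}\IR$ by a formula in $\LL_W$ with the standard parameters $0$ and $1/n$. The finite intersection property is immediate: for any $n_1,\ldots,n_k \in \IN$, the standard real $1/(\max_i n_i + 1)$ lies in $A_{n_1}\cap\ldots\cap A_{n_k}$. Saturation then produces a positive element of $\bigcap_{n\in\IN} A_n$, which is by construction infinitesimal. For (ii), the slickest argument is to take a positive infinitesimal $\eps$ from (i) and apply transfer to the true standard statement ``for every positive real $r$ and every $n \in \IN$ with $r < 1/n$ one has $1/r > n$''; the element $1/\eps \in {^*}\IR$ is then unlimited. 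Alternatively one repeats the saturation argument with $B_n := \{r \in {^*}\IR : r > n\}$.

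For (iii), the direction ``near-standard implies limited'' is a one-line triangle inequality. For the converse, given a limited $r \in {^*}\IR$, I would consider the subset $S := \{s \in \IR : s \leq r\}$ of $\IR$. Limitedness of $r$ furnishes an upper bound in $\IR$ and (using $-|r|-1 \in S$) shows $S$ is nonempty, so by Dedekind completeness of the \emph{standard} reals the supremum $s_0 := \sup S \in \IR$ exists. I then verify $r - s_0 \approx 0$ as follows: for each standard $n \in \IN$, the standard real $s_0 + 1/n$ does not lie in $S$, hence $s_0 + 1/n > r$, giving $r - s_0 < 1/n$; and by the supremum property there is $s \in S$ with $s > s_0 - 1/n$, so $r \geq s > s_0 - 1/n$ and thus $r - s_0 > -1/n$. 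Since $|r-s_0| < 1/n$ for every $n$, we have $r \approx s_0$. Uniqueness of the standard part reduces to the observation that two distinct standard reals cannot be infinitesimally close, because for $s \neq s'$ in $\IR$ one has $|s-s'| \geq 1/n$ for some $n \in \IN$.

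For (iv), if $n \in {^*}\IN$ is limited, then $n \leq m$ for some standard $m \in \IN$. The set $\{1,\ldots,m\} \subset \IN$ is finite, so by the proposition on $\ast$-images of finite sets we have ${^*}\{1,\ldots,m\} = \{1,\ldots,m\}$. Applying the transfer principle to the sentence ``every $k \in \IN$ with $k \leq m$ lies in $\{1,\ldots,m\}$'' places $n$ in $\{1,\ldots,m\} \subset \IN$, so $n$ is standard. The main obstacle in the whole proof is (iii); the subtlety there is \emph{not} the ordering trick itself but making sure one uses the completeness of the standard reals rather than that of ${^*}\IR$ (the set $S$ is an \emph{external} subset of ${^*}\IR$ in general, so saturation/transfer does not directly provide a supremum inside ${^*}\IR$). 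In (i) and (ii) the only point to verify is internality of the sets to which saturation is applied, which is guaranteed by the internal definition principle built into the construction $W_n = {^*}V_n$.
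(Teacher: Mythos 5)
Your argument follows the same route as the paper's for items (i), (ii), and (iv): saturation applied to $\{r:0<|r|<1/n\}$ and $\{r:|r|>n\}$ for the first two, and the observation that $\{1,\ldots,m\}$ is finite (hence unchanged under $\ast$) for the last. Where you go beyond the paper is item (iii), which the paper disposes of by citing Loeb; your Dedekind-cut argument supplies the actual proof, and you correctly identify the central subtlety: $S=\{s\in\IR:s\le r\}$ is an external subset, so one must invoke completeness of the \emph{standard} reals rather than anything internal. That is a genuine improvement in self-containedness.

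One small slip in (iii): you justify $S\ne\emptyset$ by asserting $-|r|-1\in S$, but $S$ is a subset of the standard reals and $-|r|-1$ is a $\ast$-real, hence generically not in $\IR$. The fix is immediate: limitedness gives a \emph{standard} $M\in\IN$ with $|r|<M$, and then $-M\in S$ (and $M$ is an upper bound). With that repaired, and noting that your argument via $1/\eps$ for (ii) is a valid alternative to the saturation argument (both appear in your write-up), the proposal is correct.
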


\begin{proof} For the definitions we refer to (\cite{L2}, definitions 1.2.7 and 1.6.9). Since the existence of infinitesimal and unlimited numbers is the key reason why to care about non-standard analysis, let us give the short proof: Define for every $n\in\IN$ the sets $a_n:=\{r\in{^*}\IR: 0<|r|<1/n\}$ and $b_n:=\{r\in{^*}\IR: |r|>n\}$. Since the corresponding collections of sets obviously have the finite intersection property, we find that $\bigcap_{n\in\IN} a_n$ and $\bigcap_{n\in\IN} b_n$ are non-empty and any element in these sets has the desired properties. For the third part we refer to (\cite{L2}, proposition 1.6.11). Part iv) follows from the observation that there are only finitely many natural numbers smaller than a given one, so the $*$-image of the corresponding set does not contain any new elements. \end{proof}

\begin{remark} \label{*-spheres} Along the same lines we have:
\begin{itemize}
\item[i)] Similar statements clearly hold when $^*\IR$ is replaced by $^*\IR^n$ for some standard $n\in\IN$. In particular, for every limited $r>0$ every point on ${^*S}^{n-1}(r)\subset{^*\IR}^n$ is near-standard, i.e., ${^*S}^{n-1}(r)$ is obtained from $S^{n-1}(r)$ by adding points which are infinitesimally close.
\item[ii)] In the same way as $^*\IR$ contains much more elements than $\IR$ itself, the non-standard extension $^*\IH$ of $\IH$ is a much larger space than $\IH$ itself.
\end{itemize}
\end{remark}   

In (\cite{L2}, theorems 1.6.8 and 1.6.15) it is shown that limited and infinitesimal numbers furthermore have the following nice closure properties. 

\begin{proposition}\label{limited-infinitesimal} We have
\begin{itemize}
\item[i)] Finite sums, differences and products of limited numbers are limited.
\item[ii)] Finite sums, differences and products of infinitesimal numbers are infinitesimal.
\item[iii)] The product of an infinitesimal number with a limited number is still infinitesimal.
\item[iv)] The standard part of a sum, difference or product of two limited numbers is the sum, difference or product of their standard parts.
\end{itemize}
\end{proposition}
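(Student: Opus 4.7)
The plan is to prove each claim by unwrapping the defining inequalities from proposition \ref{infinitesimal} and combining them using only the transferred arithmetic of ${^*\IR}$. The key point of hygiene is that basic ordered-field inequalities such as the triangle inequality $|a+b|\leq|a|+|b|$ and multiplicativity $|ab|=|a||b|$ are first-order sentences of $\LL_V$, so they hold in ${^*\IR}$ by the transfer principle and may be used freely on non-standard reals.

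For (i), given limited $r,s\in{^*\IR}$ I pick standard $m,n\in\IN$ with $|r|<m$ and $|s|<n$; then $|r\pm s|<m+n\in\IN$ and $|rs|<mn\in\IN$, witnessing that $r\pm s$ and $rs$ are again limited, and an induction on the number of summands or factors extends this to any finite sum or product. For (ii), if $r,s\approx 0$ then for each standard $n\in\IN$ I can arrange $|r|,|s|<1/(2n)$ (via the definition applied to $2n$), giving $|r\pm s|<1/n$; for the product I observe that any infinitesimal $r$ satisfies $|r|<1$ (apply the defining condition with $n=1$), so $r$ is in particular limited, whence $|rs|<1\cdot 1/n=1/n$ for every standard $n$. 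For (iii), with $|r|<m$ ($m\in\IN$) and $s$ infinitesimal, the definition of infinitesimal applied to the standard natural number $mn$ gives $|s|<1/(mn)$, so $|rs|<m\cdot 1/(mn)=1/n$ for every standard $n\in\IN$, proving $rs\approx 0$.

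For (iv), I write $r=r^{\circ}+\epsilon_1$ and $s=s^{\circ}+\epsilon_2$ with $\epsilon_1,\epsilon_2\approx 0$, using that the standard parts exist by proposition \ref{infinitesimal}(iii). Then $r\pm s=(r^{\circ}\pm s^{\circ})+(\epsilon_1\pm\epsilon_2)$, and by part (ii) the second summand is infinitesimal; expanding $rs=r^{\circ}s^{\circ}+r^{\circ}\epsilon_2+s^{\circ}\epsilon_1+\epsilon_1\epsilon_2$, the last three terms are infinitesimal by parts (iii) and (ii). Part (i) guarantees that $r\pm s$ and $rs$ are themselves limited (so a standard part is defined), and the uniqueness of the standard part from proposition \ref{infinitesimal}(iii) then forces ${^{\circ}(r\pm s)}=r^{\circ}\pm s^{\circ}$ and ${^{\circ}(rs)}=r^{\circ}s^{\circ}$.

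There is really no main obstacle here: the entire proposition is a routine consequence of the definitions plus transfer of the ordered-field axioms. The only conceptual care is the hygienic point mentioned above --- one must justify the arithmetic inequalities as holding in ${^*\IR}$ via transfer rather than citing them from $\IR$ --- and the fact that parts (i)--(iii) are genuinely used inside (iv), so the order of the four items must be respected.
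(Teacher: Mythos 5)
Your proof is correct and self-contained. Note that the paper itself offers no proof of this proposition at all: it simply cites Loeb (\cite{L2}, theorems 1.6.8 and 1.6.15), as it does for most of the elementary facts in the appendix. So there is no "paper's approach" to compare against; what you have written is the standard argument one finds in those references. The chain of reasoning is the right one: reduce everything to the definitions of \emph{limited} and \emph{infinitesimal} via proposition \ref{infinitesimal}, invoke the transfer of the ordered-field axioms (triangle inequality, $|ab|=|a||b|$) to manipulate moduli in ${^*\IR}$, and then derive (iv) from the decomposition $r=r^{\circ}+\epsilon$ together with (i)--(iii) and the uniqueness of the standard part. Two small stylistic remarks: in (ii) it is cleaner to note directly that any infinitesimal is limited (take $m=1$ in the definition) rather than to single out the bound $|r|<1$, since that is the fact you actually use in the product estimate; and in (iv), the observation that (i) must be invoked first so that the standard parts of $r\pm s$ and $rs$ exist is exactly the kind of logical bookkeeping that is easy to overlook, so it is good that you made it explicit.
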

 
\begin{remark} In an analogous way we prove in section $5$ that every infinite-dimensional (separable) Hilbert space $\IH$ is contained in a *-finite-dimensional Euclidean vector space $\IF$ of some unlimited but *-finite dimension $N\in{^*}\IN\backslash\IN$. The infinite-dimensional Hilbert space $\IH$ is \emph{not} a *-finite-dimensional Euclidean vector space itself, but is only \emph{contained} in some space which behaves as if it were finite-dimensional. \end{remark}

Apart from showing that the non-standard model contains infinitely-large numbers, the saturation principle immediately leads to the following, even more surprising fact, see (\cite{L2}, theorem 2.9.2).

\begin{proposition} For every standard set $b\in V_{<\infty}$ there exists a non-standard set $c\in W_{<\infty}$, which contains all elements of $a$, i.e., $a\in b$ implies ${^*}a\in c$, and which is \emph{*-finite} in the sense that there is a bijection from $c$ to an internal set $\{n\in{^*}\IN: n\leq N\}$ for some $N\in{^*}\IN$. \end{proposition}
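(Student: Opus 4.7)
The plan is to follow the same template already used elsewhere in this appendix for producing ideal objects via the saturation principle, most directly the proofs of proposition \ref{infinitesimal} and proposition \ref{existence}. The strategy is to realize the collection of all *-finite subsets of ${^*}b$ as an internal set, cut out those subsets that contain a prescribed ${^*}a$, and then show that the resulting family has the finite intersection property indexed by the standard set $b\in V$.

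First I would introduce the standard auxiliary set $P_{fin}(b):=\{s\subset b:\,s\text{ is finite}\}\in V_{<\infty}$ of finite subsets of $b$. Its $\ast$-image ${^*}P_{fin}(b)\in W_{<\infty}$ is internal, and by the transfer principle applied to the defining sentence of $P_{fin}(b)$, its elements are precisely the *-finite subsets of ${^*}b$ in the sense of the proposition, i.e., the internal sets $c\subset{^*}b$ for which there is an internal bijection $c\to\{n\in{^*\IN}:\,n\leq N\}$ for some $N\in{^*\IN}$.

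Next, for every $a\in b$ I would consider the set
$$A_a\,:=\,\{c\in{^*}P_{fin}(b):\,{^*}a\in c\}\,\in\,W_{<\infty},$$
which is internal because it is the subset of an internal set cut out by a condition involving only the parameters ${^*}P_{fin}(b)$ and ${^*}a$. To verify the finite intersection property of the family $(A_a)_{a\in b}$, I would use that for any finitely many elements $a_1,\ldots,a_n\in b$ the ordinary finite set $\{a_1,\ldots,a_n\}$ lies in $P_{fin}(b)$ and, by the dichotomy recorded above for $\ast$-images of finite sets, satisfies ${^*}\{a_1,\ldots,a_n\}=\{{^*}a_1,\ldots,{^*}a_n\}$. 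This element of ${^*}P_{fin}(b)$ evidently belongs to $A_{a_1}\cap\ldots\cap A_{a_n}$.

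Since the index set $b$ is by hypothesis a set in $V$, the saturation principle then applies and produces a common point $c\in\bigcap_{a\in b}A_a$; by construction $c$ is a *-finite subset of ${^*}b$ with ${^*}a\in c$ for every $a\in b$, as required. The step I expect to require the most care is the verification that each $A_a$ is genuinely internal rather than merely an external subset of ${^*}P_{fin}(b)$, but this is automatic because $A_a$ is definable in $\LL_W$ using only internal parameters, so that no external construction is introduced and saturation is legitimately applicable.
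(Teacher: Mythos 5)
The paper does not supply its own proof of this proposition—it merely cites (\cite{L2}, theorem 2.9.10 and 2.9.2) and remarks that the fact follows from saturation. Your argument is correct and is precisely the saturation template the paper uses in the proof of proposition \ref{existence}: transferring $P_{\mathrm{fin}}(b)$ to obtain the internal set of $\ast$-finite subsets, cutting out the internal sets $A_a$ via the internal definition principle, checking the finite intersection property using ${^*}\{a_1,\ldots,a_n\}=\{{^*}a_1,\ldots,{^*}a_n\}$, and invoking saturation over the index set $b\in V$.
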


Since a subset of a finite set in the standard model $V=(V_n)_{n\in\IN}$ is again a finite set, this seems to lead to an obvious logical contradiction. However, since the transfer principle only applies to subsets of *-finite sets which belong to the non-standard model, i.e., are internal themselves, the logical paradoxon is resolved in the following

\begin{proposition} For every infinite set $b\in V_{<\infty}$, the corresponding proper subset $b={^*}[b]=\{{^*}a: a\in b\}$ of ${^*}b$ is external. For example, $\IR$ and $\IN$ are external. In particular, the non-standard model is \emph{not full}, $W_n\subsetneq\PP(W_0\cup\ldots\cup W_{n-1})$. \end{proposition}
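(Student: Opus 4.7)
The plan is to reduce the general claim to the special case $b=\IN$ and then exploit the transfer of the well-ordering of $\IN$ to contradict the assumption that $\IN\subset{}^*\IN$ is internal.

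For the reduction, since $b$ is infinite I would invoke the axiom of choice to pick an injection $f:\IN\hookrightarrow b$, and apply $\ast$ to obtain an internal injection ${}^*f:{}^*\IN\to{}^*b$. The key identity is $({}^*f)^{-1}({}^*[b])=\IN$: the inclusion $\IN\subset({}^*f)^{-1}({}^*[b])$ is immediate from $({}^*f)(n)={}^*(f(n))\in{}^*[b]$ for all $n\in\IN$, while for the converse one transfers the disjoint decomposition $b=f(\IN)\sqcup(b\setminus f(\IN))$ to ${}^*b={}^*(f(\IN))\sqcup{}^*(b\setminus f(\IN))$; combined with the fact that the image of ${}^*f$ equals ${}^*(f(\IN))$ (the transfer of $\operatorname{range}(f)=f(\IN)$), this forces any $N$ with $({}^*f)(N)\in{}^*[b]$ to satisfy $({}^*f)(N)={}^*(f(n))$ for some $n\in\IN$, and then injectivity of ${}^*f$ yields $N=n\in\IN$. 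Consequently, if $^*[b]$ were internal, its preimage under the internal map ${}^*f$ would be internal as well, so $\IN\subset{}^*\IN$ would be internal.

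It then remains to rule out that $\IN$ is internal in ${}^*\IN$. Suppose it is; then the complement ${}^*\IN\setminus\IN$ is also internal, and it is nonempty since it contains all unlimited natural numbers by Proposition \ref{infinitesimal}. Transferring the well-ordering principle ``every nonempty subset of $\IN$ has a least element'' yields that every nonempty internal subset of ${}^*\IN$ has a least element; so ${}^*\IN\setminus\IN$ admits a minimum $M$. But $M$ is unlimited, and $M-1$ is still unlimited by Proposition \ref{limited-infinitesimal}, hence $M-1\in{}^*\IN\setminus\IN$ with $M-1<M$, contradicting the minimality of $M$.

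For the non-fullness corollary, if $b\in V_n$ is infinite then its elements lie in $V_0\cup\cdots\cup V_{n-1}$, so ${}^*[b]\subset W_0\cup\cdots\cup W_{n-1}$, whence ${}^*[b]\in\PP(W_0\cup\cdots\cup W_{n-1})$; but by the first part ${}^*[b]$ is external and thus not in $W_n$, proving $W_n\subsetneq\PP(W_0\cup\cdots\cup W_{n-1})$. The main subtlety in the argument is the verification that preimages of internal sets under internal functions are internal --- but this is itself a consequence of the transfer principle applied to the defining formula of a preimage.
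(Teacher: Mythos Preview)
Your argument is correct. The paper itself does not supply a proof but merely refers to (\cite{L2}, proposition 2.9.6), so you have in fact provided more detail than the paper does. Your reduction to the case $b=\IN$ via an injection $f:\IN\hookrightarrow b$ and the subsequent contradiction using the transferred well-ordering of ${}^*\IN$ is the standard textbook route; the verification of $({}^*f)^{-1}({}^*[b])=\IN$ and the deduction of non-fullness are both clean.

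One small remark: the closure properties you invoke at two points---that preimages of internal sets under internal functions are internal, and that complements of internal sets in internal sets are internal---are exactly instances of the Internal Definition Principle (Proposition~\ref{internal-definition}); citing it directly would be cleaner than appealing to ``transfer applied to the defining formula of a preimage.''
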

 
For the short proof we refer to (\cite{L2}, proposition 2.9.6). While these results are satisfactory from the theoretical point of view, for practical purposes it is rather important to know which subsets of an internal set in the non-standard model are still internal themselves, so that statements can be proven for them by applying the transfer principle. Since in applications one is almost exclusively interested in subsets which can be defined by requiring that their elements have a specific property, the following positive result originally due to Keisler, (\cite{Ke}, theorem 15.14), see also (\cite{L2}, theorem 2.8.4), is sufficient for all our purposes. 

\begin{proposition}\label{internal-definition} (Internal Definition Principle) Every \emph{definable set} belongs to the non-standard model $W=(W_n)_{n\in\IN}$ , that is, for every formula $A(x)$ in $\LL_W$ the set $\{a\in W_{<\infty}: W\models A(x)(a)\}$ is internal. \end{proposition}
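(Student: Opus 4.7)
The strategy is to eliminate the internal parameters occurring in $A(x)$ and then transfer a set-existence statement from $V$ to $W$. Let $b_1,\ldots,b_k\in W_{<\infty}$ be the internal parameters appearing in $A(x)$. By the inductive definition of formulas given earlier in the excerpt, every quantifier in $A$ is bounded by one of these parameters, so $S:=\{a\in W_{<\infty}:W\models A(x)(a)\}$ automatically sits inside an internal bounding set $B$ (namely the outermost parameter controlling the free variable $x$; if no such bound is explicit one can insert a trivial bound into $A$ without changing its semantics). I would therefore work from the outset with the bounded form $S=\{a\in B:W\models A(x)(a)\}$.

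The key syntactic step is to replace each internal parameter by a fresh free variable: substituting $y_i$ for $b_i$ and $z$ for $B$ produces an expression $\tilde A(x,y_1,\ldots,y_k,z)$ of finite length built only from logical symbols, bound variables, and the free variables $x,y_1,\ldots,y_k,z$, with no parameters from either model. Because $V$ is \emph{full}, I can then write down a single $\LL_V$-sentence $\Phi$ asserting: for every choice of $y_1,\ldots,y_k$ at their appropriate levels in the filtration and for every set $z$ at its appropriate level, the collection $\{a\in z:V\models\tilde A(x,y_1,\ldots,y_k,z)(a)\}$ is itself a set in $V$. Fullness is what ensures the existence of this subset as an element of $V$, while the finite length of $\tilde A$ is what permits writing $\Phi$ as a legitimate finite sentence of $\LL_V$.

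Applying the transfer principle from Theorem~\ref{non-standard model} to $\Phi$ now delivers the corresponding statement inside $W$: for every internal $y_1,\ldots,y_k$ and every internal $z$, the set $\{a\in z:W\models\tilde A(x,y_1,\ldots,y_k,z)(a)\}$ is internal. Specialising to $y_i:=b_i$ and $z:=B$, and observing that the truth value of $\tilde A$ after substituting these values agrees with that of the original $A$, gives exactly that $S$ is internal.

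The main obstacle is purely metamathematical: one must verify that, for a \emph{fixed} $A\in\LL_W$, the process of abstracting its internal parameters and reading the result as a transferable $\LL_V$-sentence is well-defined and preserves semantics after transfer. This is a recursion on the parse tree of $A$ combined with an appeal to fullness of $V$. If one instead realises $W$ concretely as a polysaturated ultrapower of $V$, as in the proof of Theorem~\ref{non-standard model}, the argument becomes more direct: each internal parameter $b_i$ is represented by a sequence $(b_{i,\alpha})$ of elements of $V$, and $S$ is then represented coordinatewise by sets $S_\alpha:=\{a:V\models A_\alpha(x)(a)\}$, each of which is in $V$ by fullness, so that the coordinatewise data defines $S$ as an internal element of $W$.
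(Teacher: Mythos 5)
The paper does not actually prove this proposition; it only cites (\cite{Ke}, theorem 15.14) and (\cite{L2}, theorem 2.8.4), so there is no ``paper's own proof'' to compare against. Your argument is, however, essentially the standard proof given in those references: abstract the internal parameters $b_1,\dots,b_k$ of $A(x)$ into fresh free variables to obtain a parameter-free bounded formula $\tilde A$, use fullness of $V$ to formulate the $\LL_V$-sentence asserting that for all choices of parameters at the appropriate filtration levels the corresponding separation subset exists as an element of $V$, transfer that sentence via Theorem~\ref{non-standard model}, and then specialise the transferred sentence to $b_1,\dots,b_k$ to conclude that $S$ is internal. This is correct, and your alternative ultrapower description (``coordinatewise'' definition via representatives $b_{i,\alpha}$ and $S_\alpha$) is also a valid way to see it.

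One point is worth flagging more sharply than you do. As literally stated in the paper, the proposition takes $S=\{a\in W_{<\infty}:W\models A(x)(a)\}$ with no explicit internal bound, and that version is false in general: for $A(x)$ the tautology $x=x$, the resulting $S$ is all of $W_{<\infty}$, which is external. Your remark that ``if no such bound is explicit one can insert a trivial bound into $A$ without changing its semantics'' is not quite right either, since there is no canonical internal bound to insert. What saves the argument is that the paper's inductive definition of formulas only produces bounded formulas, and the references state the Internal Definition Principle in the correct bounded form $S=\{a\in B:W\models A(x)(a)\}$ for an internal $B$. You are right to work with that bounded form from the outset; just be aware that this is a correction to the statement as printed, not something one can derive from it.
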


In particular, every finite subset of an internal set is internal. \\

The fact that every internal set containing an infinite standard set as a subset must be strictly larger leads to the so-called \emph{spillover principles}, see (\cite{L2}, theorem 2.8.12). 

\begin{proposition}\label{spillover} Let $b$ denote an internal subset of ${^*}\IN$. Then it holds: 
\begin{itemize}
\item[i)] If for every $m\in\IN$ there exists some $n\geq m$ with $n\in b$, then $b$ must contain an unlimited *-natural number. 
\item[ii)] If for every unlimited *-natural number $N$ there exists a *-natural number $n\leq N$ with $n\in b$, then $b$ must also contain a standard natural number. 
\end{itemize}
\end{proposition}

\begin{proof} To prove i) define for every $m\in\IN$ the internal subset $b_m=\{n\in b: n\geq m\}$ of $b$. Since $b_{m_1}\cap\ldots\cap b_{m_k}\neq\emptyset$ for every finite collection $m_1,\ldots,m_k\in\IN$, it follows by the saturation principle that $\bigcap_{m\in\IN} b_m$ must contain an element which is an element of $b$ and greater than every standard natural number. In order to prove ii) observe that, by transfer, $b$ must have a minimal element $n$. If $n$ was unlimited, then there must exist $m\in b$ with $m\leq n-1$, contradicting the minimality of $n$. \end{proof}   

Finally, one of the main benefits of non-standard analysis is that the clumpsy $\epsilon$-formalism can be avoided by introducing infinitesimals and unlimited *-natural numbers. For the following proposition we refer to (\cite{L2}, theorem 1.7.1).

\begin{proposition}\label{convergence} 
A sequence $(s_n)_{n\in\IN}$ of real numbers converges to zero, $s_n\to 0$, as $n\to\infty$ if and only if $s_N:={^*s_N}\approx 0$ for all unlimited $N\in{^*}\IN\backslash\IN$. 
\end{proposition}

\begin{proof} First assume that $s_n\to 0$ as $n\to\infty$. By definition we know that for all $\eps>0$ there exists $n_0\in\IN$ such that $\forall n\in\IN: n\geq n_0\Rightarrow |s_n|<\eps$. By transfer, it follows that $\forall n\in{^*\IN}: n\geq n_0\Rightarrow |{^*s_n}|<\eps$. Since every unlimited $N\in{^*}\IN\backslash\IN$ is greater than every standard $n_0\in\IN$, it follows that $|{^*s_N}|<\eps$ for all standard $\eps>0$, that is, $|{^*s_N}|\approx 0$. In the opposite direction, assume that $|{^*s_N}|\approx 0$ for all unlimited $N\in{^*}\IN\backslash\IN$, in particular, for every standard $\eps>0$ and all unlimited $N$ there exists some $m\leq N$ such that $\forall n\in{^*\IN}: n\geq m\Rightarrow|{^*s_n}|<\eps$. By the spillover principle it follows that there must exist some standard $m\in\IN$ such that $\forall n\in\IN: n\geq m\Rightarrow|s_n|<\eps$, that is, $s_n\to 0$ as $n\to\infty$. \end{proof}

Since convergence in metric spaces is defined by requiring that the distance between points converges to zero, the above result immediately generalizes to all metric spaces.


\begin{thebibliography}{100000}
\bibitem{AM} Abbondandolo, A., Majer, P., \emph{A non-squeezing theorem for convex symplectic images of the Hilbert ball.} Arxiv preprint (1405.3200), 2014.
\bibitem{B} Berti, M., \emph{Nonlinear Oscillations of Hamiltonian PDEs.} Springer, 2007.
\bibitem{BEHWZ} Bourgeois, F., Eliashberg, Y., Hofer, H., Wysocki, K. and Zehnder, E., \emph{Compactness results in 
      symplectic field theory.} Geom. and Top. 7, pp. 799-888, 2003. 
\bibitem{DS} Dostoglou, S., Salamon, D., \emph{Self-dual instantons and holomorphic curves.} Ann. Math. 139, pp. 581-640, 1994.
\bibitem{EK} Eliasson, H., Kuksin, S., \emph{KAM for the nonlinear Schrödinger equation.} Ann. Math. 172, pp. 371-435, 2010.
\bibitem{Fa} Fabert, O., \emph{Infinite-dimensional symplectic non-squeezing using non-standard analysis.} Arxiv preprint (1501.05905), 2015.
\bibitem{Fl} Floer, A., \emph{Symplectic fixed points and holomorphic spheres.} Comm. Math. Phys. 120(4), pp. 575-611, 1989.
\bibitem{Fo} Fortune, B., \emph{A symplectic fixed point theorem for $\CP^n$.} Inv. Math. 81(1), pp. 29-46, 1985.
\bibitem{Gr} Gromov, M., \emph{Pseudo holomorphic curves in symplectic manifolds.} Inv. Math. 82(2), pp. 307-347, 1985. 
\bibitem{Ke} Keisler, H., \emph{Foundations of infinitesimal calculus.} Boston: Prindle, Weber and Schmidt, 1976. Online available at http://creativecommons.org/licenses/by-nc-sa/2.5/
\bibitem{K} Kuksin, S., \emph{Infinite-dimensional symplectic capacities and a squeezing theorem for Hamiltonian PDE’s.} Commun. Math. Phys. 167, pp. 531–552, 1995.
\bibitem{L1} Loeb, P., \emph{Simple Nonstandard Analysis and Applications.} in: Loeb, P., Wolff, M., Nonstandard Analysis for the Working Mathematician, Mathematics and its applications, Springer, 2015.
\bibitem{L2} Loeb, P., \emph{An Introduction to General Nonstandard Analysis.} in: Loeb, P., Wolff, M., Nonstandard Analysis for the Working Mathematician, Mathematics and its applications, Springer, 2015.
\bibitem{MDSa} McDuff, D. and D. Salamon, \emph{$J$-holomorphic curves and symplectic topology.} AMS Colloquium Publications 52, 2004.
\bibitem{Os} Osswald, H., \emph{The existence of polysaturated models.} in: Non-standard Analysis for the Working Mathematician,
 Mathematics and its applications, Kluwer, 2000.
\bibitem{OS} Osswald, H., \emph{Malliavin calculus in abstract Wiener space using infinitesimals.} Adv. Math. 176(1), pp. 1-37, 2003.
\bibitem{Ra} Rabinowitz, P., \emph{Free vibrations for a semi-linear wave equation.} Comm. Pure Appl. Math. 31(1), pp. 31-68, 1978.
\bibitem{R} Robinson, A., \emph{Non-standard analysis.} Princeton University Press, 1996.
\bibitem{RS} Robbin, J., Salamon, D., \emph{Asymptotics of holomorphic strips.} Ann. I.H. Poincare 18(5), pp. 573-612, 2001.
\bibitem{Sch} Schwarz, M., \emph{A quantum cup-length estimate for symplectic fixed points.} Inv. Math. 133(2), pp. 353-397, 1998.
\bibitem{ST} Sukhov, A., Tumanov, A., \emph{Symplectic non-squeezing in Hilbert space and discrete Schr\"odinger equations.} Arxiv preprint (1411.3989), 2014.
\end{thebibliography}
\end{document}